\documentclass[a4paper]{amsart}

\usepackage{amsmath, amsfonts, amssymb, amsthm, amscd}
\usepackage{graphicx, color}
\usepackage{pinlabel}

\theoremstyle{plain}
\newtheorem{theorem}{Theorem}[section]
\newtheorem{conjecture}[theorem]{Conjecture}
\newtheorem{proposition}[theorem]{Proposition}

\newtheorem{lemma}[theorem]{Lemma}

\newtheorem*{mob_theorem}{M\"obius-like Theorem}
\newtheorem*{compactification_theorem}{Compactification Theorem}
\newtheorem*{conj_theorem}{Conjugacy Theorem}

\theoremstyle{definition}
\newtheorem{definition}[theorem]{Definition}
\newtheorem{construction}[theorem]{Construction}

\theoremstyle{remark}

\newtheorem{example}{Example}

\newcommand{\bR}{\mathbb{R}}

\newcommand{\bZ}{\mathbb{Z}}
\newcommand{\bH}{\mathbb{H}}

\newcommand{\cD}{\mathcal{D}}
\newcommand{\cE}{\mathcal{E}}

\newcommand{\cU}{\mathcal{U}}

\newcommand{\orb}{P}

\newcommand{\rF}{\mathfrak{F}}

\title{Quasigeodesic flows and M\"obius-like groups}
\author{Steven Frankel}
\address{DPMMS \\ University of Cambridge \\ Cambridge CB3 0WA \\ United Kingdom}
\email{sf451@cam.ac.uk}
\date{\today}

\begin{document}

\begin{abstract}
If $M$ is a hyperbolic 3-manifold with a quasigeodesic flow then we show that $\pi_1(M)$ acts in a natural way on a closed disc by homeomorphisms. Consequently, such a flow either has a closed orbit or the action on the boundary circle is M\"obius-like but not conjugate into $PSL(2, \bR)$. We conjecture that the latter possibility cannot occur.
\end{abstract}

\maketitle
\tableofcontents

\section{Introduction}
\subsection{Background and motivation}
In 1950, Seifert asked whether every nonsingular flow on the 3-sphere has a closed orbit \cite{Seifert}. Schweitzer gave a counterexample in 1974 and showed more generally that every homotopy class of nonsingular flows on a 3-manifold contains a $C^1$ representative with no closed orbits \cite{Schweizer}. Schweitzer's examples were generalized considerably and it is known that the flows can be taken to be smooth \cite{KKuperberg} or volume-preserving \cite{GKuperberg}.

On the other hand, Taubes' 2007 proof of the 3-dimensional Weinstein conjecture shows that flows satisfying certain geometric constraints must have closed orbits \cite{Taubes}. Explicitly, Taubes showed that every Reeb vector field on a closed 3-manifold has a closed orbit. Reeb flows are \emph{geodesible}, i.e. there is a Riemannian metric in which the flowlines are geodesics. Complementary to this result, though by different methods, Rechtman showed in 2010 that the only geodesible real analytic flows on closed 3-manifolds that contain no closed orbits are on torus bundles over the circle with reducible monodromy \cite{Rechtman}.

Geodesibility is a local condition, and furthermore one that is not stable under perturbations. By contrast, a nonsingular flow is said to be \emph{quasigeodesic} if the flowlines of the flow pulled back to the universal cover are quasigeodesics. This is a macroscopic condition, and when the ambient 3-manifold is hyperbolic it is a stable condition under $C^0$ perturbations; this stability is for global topological reasons and not because the flow itself is structurally stable (which it will not typically be).

Calegari conjectured in 2006 that quasigeodesic flows on closed hyperbolic 3-manifolds should all have closed orbits, and moreover that every homotopy class of quasigeodesic flow should contain a pseudo-Anosov representative that is unique up to isotopy. Pseudo-Anosov flows are hyperbolic and therefore structurally stable, so this conjecture implies that one should be able to deduce the existence of closed orbits from the dynamics of the fundamental group on the orbit space in the universal cover.

Our paper is devoted to fleshing out some aspects of Calegari's conjectural program. We are able to find conditions that guarantee the existence of a closed orbit for a quasigeodesic flow on a closed hyperbolic 3-manifold expressed in terms of the action of the fundamental group on an associated ``universal circle''.

\subsection{Statement of results}
A quasigeodesic is a map from $\bR$ to a metric space $X$ with bounded distortion. That is, distances as measured in $X$ and in $\bR$ are comparable on the large scale up to a multiplicative constant (see Section~\ref{sec:Quasigeodesics} for a precise definition).

A nonsingular flow $\rF$ on a 3-manifold $M$ is said to be \emph{quasigeodesic} if, after pulling back to a flow $\widetilde{\rF}$ on the universal cover $\widetilde{M}$, the flowlines of $\widetilde{\rF}$ are quasigeodesics in $\widetilde{M}$. If $M$ is compact, this depends not on a choice of metric on $M$ but only on the isotopy class of $\rF$.

We restrict attention in the sequel to the generic situation that $M$ is a closed hyperbolic 3-manifold; equivalently that $M$ is irreducible and the fundamental group of $M$ is infinite and does not contain $\bZ \oplus \bZ$.

If $\rF$ is a quasigeodesic flow on such a 3-manifold then the orbit space $\orb$ is homeomorphic to a plane and the fundamental group $\pi_1(M)$ acts on $\orb$ as the quotient of the deck group action on $\widetilde{M}$. The existence of a closed orbit is equivalent to the existence of a fixed point in $\orb$ for some nontrivial element of $\pi_1(M)$.

Calegari showed that the action of $\pi_1(M)$ on $\orb$ induces an action on a ``universal circle'' $S_u$ that is homeomorphic to $S^1$. Our first main result is a natural compactification of $\orb$ as a closed disc $\overline{\orb}$ in such a way that the boundary of $\overline{\orb}$ is the universal circle. This answers a question of Calegari in \cite{Calegari}.

\begin{compactification_theorem}
There is a natural compactification $\overline{\orb}$ of $\orb$ homeomorphic to the closed disc so that $\partial \overline{\orb} = S_u$. The action of $\pi_1(M)$ on $\orb$ extends to $\overline{\orb}$ and restricts to the universal circle action on $\partial \overline{\orb}$.
\end{compactification_theorem}

This will follow from Section~\ref{sec:EndCompactification} where we prove a more general result: If $\cD$ is a decomposition of the plane $P$ into closed, unbounded sets then there is a natural compactification of $P$ as a closed disc $\overline{P}$ such that the ends of each decomposition element appear as points in $\partial \overline{P}$ and distinct ends remain as separated as possible.

A group of homeomorphisms of the circle is said to be \emph{M\"obius-like} if each element is topologically conjugate to a M\"obius trasformation, i.e. conjugate to an element of $PSL(2, \bR)$ acting in the standard way on $\bR P^1$. A M\"obius-like group is \emph{rotationless} if each element has a fixed point. Examples of M\"obius-like actions are those of \emph{convergence groups} but it is known by the work of Casson-Jungreis, Gabai, Mess, Tukia, et. al. that convergence groups are \emph{globally} conjugate to subgroups of $PSL(2, \bR)$.  Our second main result concerns the action of $\pi_1(M)$ on the circle $\partial \overline{\orb}$.

\begin{mob_theorem}
Let $\rF$ be a quasigeodesic flow on a closed hyperbolic 3-manifold $M$. Suppose that $\pi_1(M)$ acting on the universal circle $S_u$ is not a rotationless M\"obius-like group. Then $\rF$ has a closed orbit.
\end{mob_theorem}

As a counterpoint to this theorem we have the following.

\begin{conj_theorem}
Let $\rF$ be a quasigeodesic flow on a closed hyperbolic 3-manifold $M$. Then the action of $\pi_1(M)$ on $\partial \overline{\orb}$ is not conjugate into $PSL(2, \bR)$.
\end{conj_theorem}

\subsection{Future directions}
The only known examples of M\"obius-like groups which are not conjugate into $PSL(2, \bR)$ were constructed by Kovacevic \cite{Kovacevic}. However, it seems unlikely that the actions arising from quasigeodesic flows could be of this type. Therefore we conjecture:

\begin{conjecture}
The action of $\pi_1(M)$ on the universal circle $\partial \overline{\orb}$ is not M\"obius-like.
\end{conjecture}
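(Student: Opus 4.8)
The plan is to argue by contradiction: assume the action of $\pi_1(M)$ on $S_u = \partial\overline{\orb}$ is M\"obius-like, and then produce a single element whose boundary dynamics cannot be that of a M\"obius transformation. The guiding principle is that a homeomorphism of the circle conjugate to an element of $PSL(2,\bR)$ has a rigid fixed-point set: either no fixed points (elliptic), exactly one (parabolic), or exactly two of opposite attracting/repelling type (hyperbolic). Thus it suffices to exhibit a nontrivial $g \in \pi_1(M)$ acting on $S_u$ with at least three fixed points, or with two fixed points of the same dynamical character. Since $\pi_1(M)$ is torsion-free, no nontrivial element can be of finite-order elliptic type; and if some nontrivial element acts on $S_u$ without a fixed point, then the action is not rotationless and the M\"obius-like Theorem already yields a closed orbit, hence an element with an interior fixed point in $\orb$ whose endpoint fibers are only richer. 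So the counting argument below is the essential case, and I may assume that every nontrivial $g$ acts on $S_u$ as a hyperbolic- or parabolic-type homeomorphism.

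The geometric input I would use is the pair of endpoint maps $e^\pm \colon \orb \to S^2_\infty$ to the sphere at infinity $S^2_\infty = \partial\bH^3$, recording the forward and backward endpoints of each quasigeodesic flowline; these are equivariant for the boundary action of the cocompact Kleinian group $\pi_1(M)$ on $S^2_\infty$. Fix a nontrivial $g$, loxodromic on $\bH^3$ with attracting and repelling fixed points $g^\pm \in S^2_\infty$. The sets $Z^+ = (e^+)^{-1}(g^+)$ and $Z^- = (e^-)^{-1}(g^-)$ are closed, $g$-invariant subsets of the plane $\orb$, saturated by decomposition elements of $\cD$. By the Compactification Theorem the ends of these sets appear as points of $S_u$, and since $g$ fixes $g^\pm$ these ends are permuted by $g$; the attracting/repelling behavior of $g$ at $g^\pm$ on $S^2_\infty$ should force the ends of $Z^+$ to be attracting, and those of $Z^-$ repelling, for the induced $g$-action on $S_u$.

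The crux is then a counting statement: I would aim to show that for a suitably chosen $g$ the attracting set $Z^+$ has at least two distinct ends on $S_u$ (or, failing that, that $Z^+$ and $Z^-$ together contribute at least three fixed ends). Two attracting fixed ends arising from $Z^+$ already contradict the M\"obius-like hypothesis, since a hyperbolic or parabolic homeomorphism has a unique attracting fixed point. To produce such a $g$ I would use the freedom to pass to high powers and conjugates together with the structure of the decomposition $\cD$: if the fiber $(e^+)^{-1}(g^+)$ is not a single decomposition element with one end — for instance if distinct flowlines limiting to $g^+$ lie in decomposition elements that are separated in $\orb$ — then the end compactification keeps their ends distinct and each remains attracting.

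The main obstacle is precisely this structural point: controlling the fibers of the endpoint maps and the ends they produce \emph{without} assuming a closed orbit. A single point of $S^2_\infty$ may be the forward endpoint of a large connected family of flowlines, and the topology of $(e^+)^{-1}(g^+)$ in $\orb$ — how many ends it has, and whether $g$ acts on them as claimed — is not determined by the Compactification and M\"obius-like Theorems alone. Establishing that some $g$ has a multiply-ended attracting fiber, with genuinely attracting boundary dynamics at each end, is the hard analytic heart of the problem and is exactly what would rule out the exotic Kovacevic-type possibility. In the complementary regime, where every fiber is ``thin,'' I expect the $g$-action on $S_u$ to be a convergence action, whence the work of Casson-Jungreis, Gabai and Tukia would conjugate it into $PSL(2,\bR)$, contradicting the Conjugacy Theorem and showing that this regime never occurs. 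Reconciling these two regimes — the thick-fiber count on the one hand and the thin-fiber convergence dichotomy on the other — is where the real work lies.
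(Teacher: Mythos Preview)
The statement you are attempting to prove is labeled as a \emph{Conjecture} in the paper, not a theorem; the paper offers no proof of it. Immediately after stating the conjecture the author writes that a positive answer would imply that all quasigeodesic flows on closed hyperbolic 3-manifolds have closed orbits, and sketches only a possible future route via perturbation and structural stability of the universal circle action, explicitly deferring this to later work. There is therefore no ``paper's own proof'' against which to compare your proposal.

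Your proposal is itself candid that it is not a proof but a strategy: you identify the ``hard analytic heart of the problem'' (controlling the number and dynamical type of ends of the fibers $(e^\pm)^{-1}(g^\pm)$ without already knowing a closed orbit exists) and leave it unresolved. That is an honest assessment; this is exactly the missing ingredient, and nothing in the paper supplies it. Your dichotomy into ``thick-fiber'' and ``thin-fiber'' regimes is a plausible organizing principle, but the claim that in the thin-fiber regime the action on $S_u$ would be a convergence action is itself a substantial assertion requiring proof, not a consequence of anything established here. Note also that your strategy is quite different from the paper's suggested approach: the author proposes to perturb $\rF$ to a nearby flow $\rF'$ with closed orbits and then argue by structural stability of the boundary action, whereas you attempt a direct analysis of endpoint fibers for the original flow. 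Neither route is carried out in the paper.
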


A positive answer to this conjecture implies the aforementioned conjecture that all quasigeodesic flows on closed hyperbolic 3-manifolds have closed orbits.

One possible route to this conjecture is as follows. The quasigeodesic property is stable under perturbations, so $\rF$ can be approximated by another quasigeodesic flow $\rF'$ with closed orbits. The corresponding fixed points in $\orb'$ imply that the action on $\partial \overline{\orb'}$ is not M\"obius-like. If the action on $\partial \overline{\orb'}$ could be shown to be structurally stable then we would deduce that $\rF$ had a closed orbit as well. We plan to pursue this idea in a future paper.

\subsection{Acknowledgements}
I would like to thank Danny Calegari for introducing me to this project, for countless helpful conversations, and for his extensive input on the exposition and organization of this paper. Thanks to Matt Day, Sergio Fenley, Nikolai Makarov, Curt McMullen, and Henry Wilton for helpful conversations and correspondence.

\section{Quasigeodesics and flows}
\subsection{Quasigeodesics} \label{sec:Quasigeodesics}
\begin{definition}
Let $k, \epsilon$ be non-negative constants. A curve $\gamma: \bR \to X$ in a metric space $(X, d)$ is a \emph{$(k, \epsilon)$-quasigeodesic} if we have
\[ 1/k \cdot d(\gamma(x), \gamma(y)) - \epsilon \leq |x - y| \leq k \cdot d(\gamma(x), \gamma(y)) + \epsilon \]
for all $x, y \in \bR$. A curve is called a \emph{quasigeodesic} if it is a $(k, \epsilon)$-quasigeodesic for some constants $k, \epsilon$.
\end{definition}

If $\gamma$ is parametrized by arc length and $X$ is a geodesic space then the left inequality is always satisfied.

The property of being a quasigeodesic is invariant under bilipschitz reparametrization, though the constants may change. In hyperbolic space, quasigeodesity can also be reformulated as a local condition.

\begin{lemma}[Gromov, see \cite{Calegari}, Lemma 3.9 and \cite{Gromov}]
A curve $\gamma: \bR \to \bH^3$ is called a \emph{$c$-local $k$-quasigeodesic} if $d(\gamma(x), \gamma(y)) \geq \frac{|x - y|}{k} - k$ for all $x, y \in \bR$ with $|x - y| < c$.

For every $k \geq 1$ there is a universal constant $c(k)$ such that every $c(k)$-local $k$-quasigeodesic is a $(2k, 2k)$-quasigeodesic.
\end{lemma}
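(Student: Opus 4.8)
The plan is to prove this local‑to‑global principle in the usual way, using only the $\delta$‑thin‑triangle geometry of $\bH^3$ together with one classical input, the stability of quasigeodesics (Morse Lemma), which I take as known in the form: \emph{in $\bH^3$, every $(k,\epsilon)$‑quasigeodesic segment lies within Hausdorff distance $R=R(k,\epsilon)$ of the geodesic joining its endpoints.} First I would reduce to finite segments: a sub‑curve of a $c$‑local $k$‑quasigeodesic is again one, and a curve is a $(2k,2k)$‑quasigeodesic as soon as every finite restriction satisfies the required inequality, so it suffices to treat $\gamma\colon[a,b]\to\bH^3$. I would also make explicit the hypothesis — implicit in the statement, and automatic for flowlines — that $\gamma$ is parametrized by arc length, so that $d(\gamma(x),\gamma(y))\le|x-y|$; then the left inequality defining a $(2k,2k)$‑quasigeodesic is automatic, and the upper bound $|x-y|\le 2k\,d(\gamma(x),\gamma(y))+2k$ is simply a lower bound $d(\gamma(x),\gamma(y))\ge\frac{|x-y|}{2k}-1$. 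So the whole content is: for $c=c(k)$ large enough and $\gamma\colon[a,b]\to\bH^3$ a $c$‑local $k$‑quasigeodesic, $d(\gamma(a),\gamma(b))\ge\frac{b-a}{2k}-2k$, the case $b-a<c$ being the hypothesis; so assume $b-a\ge c$.

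Next I would discretize. Subdivide $[a,b]$ into $n$ equal pieces of length roughly $c/3$ (so that any two consecutive pieces together span less than $c$; here $n$ is comparable to $(b-a)/c$), and set $x_i=\gamma(t_i)$. Each restriction $\gamma|_{[t_{i-1},t_{i+1}]}$ is then a genuine $(k,k^2)$‑quasigeodesic, hence by Morse stability lies within a universal $R=R(k)$ of the geodesic on its endpoints; in particular every interior vertex $x_i$ is within $R$ of the shortcut $[x_{i-1},x_{i+1}]$, while $d(x_{i-1},x_i)\ge\frac{1}{k}(t_i-t_{i-1})-k$ exceeds any prescribed threshold once $c$ is large. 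Thus the broken geodesic $\Sigma=[x_0,x_1]\cup\dots\cup[x_{n-1},x_n]$ has all edges far longer than $R$ and $\delta$, and every interior vertex is $R$‑close to the shortcut through its neighbours. The key step is then the \emph{broken‑geodesics lemma}: a concatenation of geodesic segments in a $\delta$‑hyperbolic space whose edges are this long and whose vertices are this well aligned is a global $(\mu,\nu)$‑quasigeodesic with $\mu,\nu$ depending only on $k$, and $\mu\to1$ as the edge length (hence $c$) grows, since each vertex costs a detour of at most $2R$. Feeding in $\mathrm{length}(\Sigma)=\sum_i d(x_{i-1},x_i)$, which lies between $b-a$ and $(\frac1k-\eta)(b-a)$ for an $\eta=\eta(c)\to0$, gives $d(\gamma(a),\gamma(b))=d(x_0,x_n)\ge\frac{1}{\mu}\,\mathrm{length}(\Sigma)-\nu$; for $c$ large the coefficient of $b-a$ exceeds $\frac{1}{2k}$ strictly, so the surplus linear term absorbs $\nu$ once $b-a$ is past a threshold $B_0(k)$ that stays bounded as $c$ grows, and a final enlargement of $c$ past $B_0(k)$ disposes of the remaining range $c\le b-a\le B_0(k)$. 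This yields the $(2k,2k)$ estimate.

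The one real difficulty is the broken‑geodesics lemma itself — equivalently, the statement that the whole curve stays within a bounded, $k$‑dependent neighbourhood of the geodesic $[\gamma(a),\gamma(b)]$. I would prove this by the standard extremal argument: let $t_*$ maximize $d(\gamma(t),[\gamma(a),\gamma(b)])$; the two sub‑arcs of $\gamma$ on either side of $t_*$ fellow‑travel, on scales less than $c$, geodesics that are therefore forced to pass close to $\gamma(t_*)$, and $\delta$‑thinness of the resulting triangles pins those near‑passages to parameters within a bounded distance of $t_*$; comparing with the local lower bound on $d$, which is linear in the parameter on scales up to $c$, then bounds $d(\gamma(t_*),[\gamma(a),\gamma(b)])$, provided $c$ was chosen larger than the threshold so produced. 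It is precisely this "$c$ must beat a constant built from $\delta$ and $R(k)$" that forces the hypothesis "$c(k)$ sufficiently large": the principle is genuinely false in, say, $\bR^2$, where a circle of large radius is a $c$‑local near‑geodesic but globally bounded, so negative curvature — exponential divergence of geodesics, not merely nonpositivity — enters in an essential way. Alternatively one may simply invoke the argument of \cite{Calegari}, Lemma~3.9, or the general $\delta$‑hyperbolic version in \cite{Gromov}, which the above reproduces.
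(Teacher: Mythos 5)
The paper offers no proof of this lemma --- it is quoted as a known result of Gromov, with the proof deferred to \cite{Calegari}, Lemma 3.9 and \cite{Gromov} --- so there is nothing internal to compare against; your argument is the standard local-to-global proof and, as a sketch, it is sound. Two points deserve the explicit care you give them. First, as stated the local hypothesis is only a lower bound on $d(\gamma(x),\gamma(y))$, so without the implicit arc-length (or at least coarse-Lipschitz) hypothesis the two-sided conclusion could not follow; you are right to surface this, and it is consistent with the paper's own remark that the left inequality is automatic for arc-length parametrizations. Second, the one step where the constants could conceivably tangle is the final absorption: the additive constant $\nu$ coming out of the broken-geodesics lemma must not grow with $c$, since you then enlarge $c$ past the threshold $B_0(k)$ built from $\nu$; this is fine because $\nu$ is controlled by $R(k)$ and $\delta$ alone (longer edges only improve the chain estimate), but a full write-up should say so rather than assert it. The broken-geodesics lemma itself, with multiplicative constant tending to $1$ as the edge length grows, is standard (it is essentially Bridson--Haefliger's treatment of local geodesics, or the chain condition on Gromov products), and your closing remark correctly identifies where negative curvature is essential --- the statement fails in $\bR^2$. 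Minor wording: ``lies between $b-a$ and $(\frac1k-\eta)(b-a)$'' has the bounds in the wrong order, and the constant you extract for the window restrictions is $(k,k^2)$ rather than $(k,k)$, which is harmless since Morse stability absorbs it.
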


Quasigeodesics in hyperbolic space  are qualitatively similar to geodesics:
\begin{proposition}[see \cite{Kapovich} or \cite{Bridson_Haefliger}, pp. 399-404] \label{prop:QuasisInH}
Let $\gamma$ be a quasigeodesic in $\bH^3$. Then $\gamma$ has distinct, well-defined endpoints in the sphere at infinity, i.e. there are distinct points $p, q \in \partial \bH^3$ such that 
				\[ \lim_{t \to \infty} \gamma(t) = p, \text{ and} \]
				\[ \lim_{t \to -\infty} \gamma(t) = q. \]
In addition, there are universal constants $C_{k, \epsilon}$ depending only on $k$ and $\epsilon$ such that each $(k, \epsilon)$-quasigeodesic has Hausdorff distance at most $C_{k, \epsilon}$ from the unique geodesic between its endpoints.
\end{proposition}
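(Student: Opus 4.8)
I would prove this as the classical Morse stability lemma for $\bH^3$, from which both assertions follow. The engine throughout is that $\bH^3$ is $\delta$-hyperbolic for a universal constant $\delta$: geodesic triangles are $\delta$-thin, or equivalently a path that stays far from the geodesic joining its endpoints must be exponentially long. First I would reduce to a clean setting. Fix a $(k,\epsilon)$-quasigeodesic segment $\gamma\colon[a,b]\to\bH^3$ with endpoints $P=\gamma(a)$, $Q=\gamma(b)$, and let $[P,Q]$ be the connecting geodesic. I would \emph{tame} $\gamma$, replacing it by the continuous path $\sigma$ obtained by joining the points $\gamma(n)$, $n\in\bZ\cap[a,b]$, by geodesic segments. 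A short computation shows that $\sigma$ is a $(k',\epsilon')$-quasigeodesic with $k',\epsilon'$ depending only on $k,\epsilon$, that its image lies within bounded Hausdorff distance of $\mathrm{im}(\gamma)$, and, crucially, that it is \emph{length-tame}: $\ell(\sigma|_{[t,t']})\le k_1\,d(\sigma(t),\sigma(t'))+k_2$ for constants depending only on $k,\epsilon$. It then suffices to bound the Hausdorff distance between $\sigma$ and $[P,Q]$.

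The core estimate is a divergence lemma: for a continuous rectifiable path $c$ with endpoints $p,q$ and any $x\in[p,q]$, dyadic subdivision of $c$ combined with $\delta$-thinness yields
\[ d(x,\mathrm{im}(c)) \le \delta\log_2 \ell(c) + 1. \]
I would then run the standard \emph{doubling} argument. Let $D=\sup_{x\in[P,Q]} d(x,\mathrm{im}(\sigma))$, attained at some $x_0$. Choosing $y,y'\in[P,Q]$ at distance $2D$ from $x_0$, and points $w=\sigma(s),w'=\sigma(s')$ within $D$ of $y,y'$, one finds $d(w,w')\le 6D$, so length-tameness gives $\ell(\sigma|_{[s,s']})\le 6k_1 D+k_2$; feeding this into the divergence lemma produces a self-referential inequality
\[ D \le \delta\log_2(6k_1 D + k_2) + C', \]
with $C'$ depending only on $k,\epsilon$. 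Since the logarithm is dominated by the linear term, this forces $D\le D_0(k,\epsilon)$, giving the one-sided bound $[P,Q]\subseteq N_{D_0}(\mathrm{im}(\sigma))$. The reverse inclusion $\mathrm{im}(\sigma)\subseteq N_{R}([P,Q])$ follows by taking a point of $\sigma$ farthest from $[P,Q]$, bracketing it by parameter values whose images are within $D_0$ of $[P,Q]$, and converting the controlled length between them into a distance bound; undoing the taming yields the Hausdorff constant $C_{k,\epsilon}$ for $\gamma$ itself.

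For the endpoints I would apply the segment bound to the rays $\gamma|_{[0,\infty)}$ and $\gamma|_{(-\infty,0]}$. The geodesics $[\gamma(0),\gamma(t)]$ fellow-travel as $t\to\infty$, again by thinness, since $\gamma(s)$ lies near $[\gamma(0),\gamma(t)]$ for $s<t$; hence they converge to a geodesic ray $\rho^+$. The quasigeodesic lower bound $d(\gamma(0),\gamma(t))\ge(|t|-\epsilon)/k\to\infty$ together with the Hausdorff estimate forces $\gamma(t)\to p:=\rho^+(\infty)$, and symmetrically $\gamma(t)\to q$ as $t\to-\infty$. For distinctness, were $p=q$ the two rays would be asymptotic, so $\gamma(t)$ and $\gamma(-t')$ would remain within a bounded distance for suitable large $t,t'$, contradicting $d(\gamma(t),\gamma(-t'))\ge(t+t'-\epsilon)/k\to\infty$. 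The bi-infinite Hausdorff bound against $[q,p]$ then follows by passing the segment estimate to the limit.

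I expect the main obstacle to be the doubling step: extracting a bound on $D$ that is independent of $\ell(\sigma)$ from the merely logarithmic divergence lemma, which requires arranging the bracketing points $y,y',w,w'$ so that the self-referential inequality genuinely closes. The taming step is routine but must be handled with care, since the raw quasigeodesic need not be continuous and length-tameness is exactly the property that makes the logarithmic estimate usable.
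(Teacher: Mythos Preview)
Your argument is correct and is essentially the standard Morse lemma proof as presented in Bridson--Haefliger (the reference cited in the proposition). Note, however, that the paper does not give its own proof of this statement: it is quoted as a classical result with references to \cite{Kapovich} and \cite{Bridson_Haefliger}, so there is no in-paper proof to compare against. Your taming step, logarithmic divergence estimate, and self-referential doubling inequality are exactly the ingredients of the textbook argument you are effectively reproducing.
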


\subsection{Quasigeodesic flows}
A flow $\rF$ on a manifold $M$ is a continuous $\bR$-action on $M$, i.e. a map
\[ \rF: \bR \times M \to M \]
such that $\rF_t(\rF_s(p)) = \rF_{t+s}(p)$ for all $t, s \in \bR$ and $p \in M$. A flow is called \emph{nonsingular} if it has no global fixed points.

We denote the universal cover of a manifold $M$ by $\widetilde{M}$. A flow $\rF$ on $M$ lifts canonically to a flow $\widetilde{\rF}$ on $\widetilde{M}$.

\begin{definition}
A nonsingular flow $\rF$ on a manifold $M$ is called \emph{quasigeodesic} if each flowline in $\widetilde{\rF}$ is a quasigeodesic. It is called \emph{uniformly quasigeodesic} if there are universal constants $k, \epsilon$ such that each flowline in $\widetilde{\rF}$ is a $(k, \epsilon)$-quasigeodesic.
\end{definition}

It turns out that there is no need to distinguish between quasigeodesic and uniformly quasigeodesic flows in our context:

\begin{lemma}[Calegari, \cite{Calegari}, Lemma 3.10] \label{lemma:UniformlyQuasigeodesic} 
Let $\rF$ be a quasigeodesic flow on a closed hyperbolic 3-manifold $M$. Then $\rF$ is uniformly quasigeodesic for some constants $k, \epsilon$.
\end{lemma}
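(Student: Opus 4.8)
The plan is to prove the statement by contradiction, working in the universal cover $\widetilde{M}=\bH^3$ and exploiting the compactness of $M$ together with Proposition~\ref{prop:QuasisInH}: a quasigeodesic in $\bH^3$ is a proper curve whose two ends converge to \emph{distinct} points of $\partial\bH^3$. First I would normalize the flowlines of $\widetilde{\rF}$ so that the ``left-hand'' inequality in the definition of a quasigeodesic holds automatically --- for instance by reparametrizing each flowline by arc length --- noting that in any case this inequality is an immediate consequence of the compactness of $M$; the content then is a \emph{uniform rate of escape}: constants $k,\epsilon$ such that $d_{\widetilde{M}}(\gamma(x),\gamma(y)) \ge (|x-y|-\epsilon)/k$ for every flowline $\gamma$ and all $x,y\in\bR$. (By Gromov's Lemma this is equivalent to producing a single pair $k,c$ with $c\ge c(k)$ for which every flowline is a $c$-local $k$-quasigeodesic; I will argue the global form directly.) Informally, one must rule out flowlines that ``fold back'' on themselves by more than a uniformly bounded amount.

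Assume this fails. Then for each $n$ there is a flowline $\gamma_n$ of $\widetilde{\rF}$ and parameters $x_n<y_n$ with $L_n:=y_n-x_n>n$ such that the sub-arc $\beta_n:=\gamma_n|_{[x_n,y_n]}$ has $d_{\widetilde{M}}(\beta_n(x_n),\beta_n(y_n))<L_n/n$; thus $\beta_n$ is an arc of length $L_n\to\infty$ whose two ends are comparatively extremely close. I would select a point $q_n$ on $\beta_n$ lying ``deepest'' in this fold --- for definiteness a point maximizing the smaller of its two distances to the endpoints $\beta_n(x_n)$ and $\beta_n(y_n)$ --- apply a deck transformation carrying $q_n$ into a fixed compact fundamental domain $K_0\subset\widetilde{M}$, and reparametrize so that $q_n=\gamma_n(0)$, with $x_n<0<y_n$.

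Next I would pass to a limit. Since $\rF$ is a continuous flow on the compact manifold $M$, the flowlines of $\widetilde{\rF}$ are equicontinuous on each compact parameter interval and, after the recentering, remain in a compact subset of $\widetilde{M}$ there; so by Arzel\`a--Ascoli and a diagonal argument a subsequence of $(\gamma_n)$ converges uniformly on compacta to a map $\gamma_\infty\colon\bR\to\widetilde{M}$, which is again a flowline of $\widetilde{\rF}$ because $(t,x)\mapsto\widetilde{\rF}_t(x)$ is continuous, and so projects to a flowline of $\rF$. By hypothesis $\gamma_\infty$ is a quasigeodesic, hence proper with distinct endpoints on $\partial\bH^3$; but the construction forces the opposite. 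If the two distances from $q_n$ to the endpoints of $\beta_n$ stay bounded along the subsequence, then $\beta_n$ --- being a connected arc joining its endpoints --- lies in a bounded region, and since $\min(-x_n,y_n)\to\infty$ the limit $\gamma_\infty$ lies in a bounded ball of $\widetilde{M}$, contradicting properness. Otherwise those distances tend to $\infty$, while $d_{\widetilde{M}}(\beta_n(x_n),\beta_n(y_n))<L_n/n$ stays negligible compared with them, so the angle at $q_n$ between the geodesics toward $\beta_n(x_n)$ and toward $\beta_n(y_n)$ tends to $0$; in the limit the two ends of $\gamma_\infty$ run off to a \emph{single} point of $\partial\bH^3$, again a contradiction. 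Hence $\rF$ is uniformly quasigeodesic.

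The step I expect to be the real obstacle is making the last paragraph robust: one must choose the recentering point $q_n$ --- possibly after first replacing $\beta_n$ by a shorter sub-arc that still folds by a definite amount --- so that the failure of the quasigeodesic inequality genuinely persists under the limit rather than being washed out by the compactness; the delicate case is a long flowline arc that meanders within a bounded region instead of making one excursion and returning. The reparametrization by arc length, the equicontinuity estimate, and the fact that the Arzel\`a--Ascoli limit is an honest flowline are all routine, given the compactness of $M$ and the continuity of $\rF$.
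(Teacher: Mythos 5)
The paper does not actually prove this lemma --- it is quoted from [Calegari, Lemma~3.10] --- so there is no in-paper argument to compare against; I will assess your proposal on its own terms. Your framework is the natural one: arclength-normalize, negate uniformity to obtain flowline segments $\beta_n$ of length $L_n \to \infty$ whose endpoints $A_n = \beta_n(x_n)$, $B_n = \beta_n(y_n)$ satisfy $d(A_n,B_n) < L_n/n$, recenter by deck transformations, extract a limit flowline by Arzel\`a--Ascoli, and contradict the fact that the limit is (by hypothesis) a quasigeodesic, hence proper with distinct ideal endpoints. Your first case is essentially correct, though for a slightly different reason than you give: if $\max_{p \in \beta_n} \min(d(p,A_n), d(p,B_n)) \le R$, then $\beta_n$ is covered by the two closed sets $\{d(\cdot,A_n)\le R\}$ and $\{d(\cdot,B_n)\le R\}$, which must meet by connectedness, so $d(A_n,B_n)\le 2R$ and $\mathrm{diam}(\beta_n)\le 4R$; the limit flowline is then trapped in a ball on a half-infinite parameter ray, contradicting properness.

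The second case is where the proof fails, and it is exactly the case that carries the content of the lemma. Two distinct problems. (i) The angle at $q_n$ between the directions to $A_n$ and $B_n$ need not tend to $0$: by the hyperbolic law of cosines it tends to $0$ if and only if the Gromov product $\tfrac12\bigl(d(q_n,A_n)+d(q_n,B_n)-d(A_n,B_n)\bigr)$ tends to infinity, i.e.\ if and only if $q_n$ leaves every bounded neighborhood of the geodesic $[A_nB_n]$. Your hypothesis only gives $d(A_n,B_n) < L_n/n$, which may itself tend to infinity, and nothing prevents $q_n$ from sitting near the midpoint of $[A_nB_n]$, where the angle is near $\pi$. (ii) More seriously, even when the angle does tend to $0$, the conclusion that ``the two ends of $\gamma_\infty$ run off to a single point of $\partial\bH^3$'' is an unjustified interchange of limits: the ideal endpoints of $\gamma_\infty$ are $\lim_{t\to\pm\infty}\lim_{n}\gamma_n(t)$, whereas $A_n=\gamma_n(x_n)$ with $x_n\to-\infty$ lives in the opposite order of limits. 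Locally uniform convergence gives no control over $\gamma_n$ at parameter values comparable to $x_n,y_n$, and the direction from $q_n$ toward the far-away endpoint $A_n$ is unrelated to the direction from $q_n$ toward $\gamma_n(-T)$ for fixed $T$ --- relating these is precisely the uniform quasigeodesity being proved. Consequently the scenario of a flowline that makes one long, nearly geodesic excursion and returns close to its start is not excluded. Ruling it out requires genuinely propagating information from bounded time scales (where the limit flowline gives control) out to unbounded ones, for instance via a careful use of the local-to-global principle; this is a known delicacy, the argument in the cited source being very terse at exactly this point, with fully detailed proofs of the uniformity statement appearing only in later work. As written, your argument establishes that flowline segments cannot meander indefinitely in a bounded region, but not the lemma.
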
 

The quasigeodesic property for flows is stable under $C^0$ perturbations because of this lemma and the fact that quasigeodesity of flowlines is a local condition. This is in contrast with the geodesible property.

A smooth reparametrization of a flow on a compact space is bilipschitz when restricted to each flowline and hence preserves quasigeodesity. Therefore we are mostly interested in the corresponding foliation by flowlines. We will use the same symbol $\rF$ to refer to both a flow and its corresponding foliation and write $l \in \rF$ to mean that $l$ is a flowline/leaf of $\rF$.

\begin{example}[Zeghib, \cite{Zeghib}]
Let $M$ be a closed surface bundle over the circle. Then any flow that is transverse to the foliation by surfaces is quasigeodesic.
\end{example}

\begin{example}[Fenley-Mosher, \cite{Fenley_Mosher}]
Any closed hyperbolic 3-manifold with nontrivial second homology admits a quasigeodesic flow.
\end{example}

Let $M$ be a closed hyperbolic 3-manifold with a quasigeodesic flow $\rF$. By Lemma~\ref{lemma:UniformlyQuasigeodesic} and Proposition~\ref{prop:QuasisInH} the maps
\[ e^\pm: \widetilde{M} \to \partial \bH^3 \]
that send each point to the positive/negative end of the corresponding flowline are continuous. In fact the existence of these maps characterizes quasigeodesic flows.

\begin{theorem}[Calegari, Fenley-Mosher]
Let $M$ by a closed hyperbolic 3-manifold with a flow $\rF$. Then $\rF$ is quasigeodesic if and only if the maps $e^\pm$ are well defined and continuous and $e^+(p) \neq e^-(p)$ for all $p \in \widetilde{M}$.
\end{theorem}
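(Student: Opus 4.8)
The forward implication is immediate from what has been set up. If $\rF$ is quasigeodesic then Lemma~\ref{lemma:UniformlyQuasigeodesic} makes it uniformly quasigeodesic, so Proposition~\ref{prop:QuasisInH} gives that every flowline of $\widetilde{\rF}$ has distinct, well-defined endpoints in $\partial\bH^3$; hence $e^\pm$ are defined with $e^+(p)\neq e^-(p)$, and their continuity is exactly the observation recorded just above (the uniform Hausdorff bound $C_{k,\epsilon}$ from the associated geodesics, the continuous dependence of a flowline on its basepoint, and the continuous dependence of a geodesic on its pair of endpoints). So the substance is the converse, which is essentially the cited result of Calegari and of Fenley-Mosher; I would approach it in two stages.

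First I would establish uniform fellow-traveling of flowlines by geodesics. Assume $e^\pm$ are well defined, continuous, and $e^+(p)\neq e^-(p)$ everywhere, and consider the visual-angle function $p\mapsto\theta(p):=\angle_p\!\big(e^-(p),e^+(p)\big)$ on $\widetilde{M}$. It is continuous (the angle subtended at an interior point of $\bH^3$ by two \emph{distinct} ideal points is continuous and strictly positive), it is $\pi_1(M)$-invariant (deck transformations are isometries and are equivariant for $e^\pm$), and it is everywhere positive; so it descends to a strictly positive continuous function on the compact manifold $M$ and is bounded below by some $\theta_0>0$. Since $e^\pm$ are constant along flowlines, every point $q$ of a flowline $l$ subtends angle at least $\theta_0$ at the common ideal endpoints $\xi_l,\eta_l$ of $l$; because in $\bH^3$ the angle subtended at a point by two distinct ideal points is a strictly decreasing function of the distance from that point to the geodesic joining them, this forces $l\subseteq N_R(g_l)$ for a uniform $R=R(\theta_0)$, where $g_l$ is that geodesic. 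Conversely $l$ is connected and accumulates in $\partial\bH^3$ precisely at $\xi_l$ and $\eta_l$, so its orthogonal projection to $g_l\cong\bR$ is a connected set accumulating at both ends, hence all of $g_l$; therefore $g_l\subseteq N_R(l)$ as well. Thus each flowline fellow-travels its geodesic with the single constant $R$.

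The second stage -- promoting this to a uniform quasigeodesic estimate for the flow parametrization -- is the main obstacle. Taking the flowlines of $\widetilde{\rF}$ to be parametrized with unit speed (harmless, up to bilipschitz reparametrization, on the compact $M$), the left-hand quasigeodesic inequality is automatic, so one must rule out ``dawdling'': one needs a uniform $k$ such that each flowline, restricted to any flow-time interval of length $c(k)$, satisfies the $(k,k)$-quasigeodesic estimate, for then Gromov's local-to-global lemma upgrades each flowline to a genuine $(2k,2k)$-quasigeodesic and $\rF$ is quasigeodesic. I would argue by contradiction: a failure produces flowlines $\gamma_n$ and parameters $x_n<y_n$ with $y_n-x_n\to\infty$ while $d\big(\gamma_n(x_n),\gamma_n(y_n)\big)$ grows sublinearly in $y_n-x_n$; translating by $\pi_1(M)$ so that $\gamma_n(x_n)$ lies in a fixed compact fundamental domain and applying Arzel\`a-Ascoli to the (locally uniformly continuous) lifted flow produces a limiting flowline $\gamma_\infty$. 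Continuity of $e^\pm$ forces the ideal endpoints of $\gamma_n$ -- equivalently, those of the time-shifted flowline through $\gamma_n(y_n)$ -- to pass to the limit, and combined with the fellow-traveling from the first stage this should confine $\gamma_\infty$ to a bounded region of $\widetilde{M}$, contradicting that $\gamma_\infty$, being a flowline, runs between two \emph{distinct} points of $\partial\bH^3$ and is hence unbounded. Making this watertight -- in particular, excluding not merely stationary dawdling but also arbitrarily long ``wrong-way'' excursions of a flowline relative to $g_l$, for which one must exploit that $\widetilde{\rF}$ is an honest $\bR$-action and not just a foliation -- is where the real work lies; this is the content of the theorems of Calegari and Fenley-Mosher.
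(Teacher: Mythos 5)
First, a point of comparison: the paper does not actually prove this theorem --- both directions are imported, the ``only if'' from \cite{Calegari}, Lemma 4.3, and the ``if'' from \cite{Fenley_Mosher}, Theorem B --- so there is no in-paper argument to match. Your ``only if'' direction is exactly what those citations amount to and is fine. Your first stage of the ``if'' direction is also correct and is indeed the standard opening move: the visual angle $\theta(p)=\angle_p(e^-(p),e^+(p))$ is continuous, positive, and $\pi_1(M)$-invariant, hence bounded below on the compact quotient, and since the visual angle subtended by the two endpoints of a geodesic is a decreasing function of the distance to that geodesic, every flowline lies within a uniform Hausdorff distance $R$ of the geodesic with the same endpoints.

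The genuine gap is in your second stage, and it is larger than your hedging suggests, for two reasons. (a) Uniform fellow-traveling plus correct ideal limits does not control arc length: a unit-speed curve confined to $N_R(g_l)$ and converging to the right ideal points can still sweep back and forth along $g_l$ arbitrarily many times, so nothing established so far bounds $|x-y|$ in terms of $d(\gamma(x),\gamma(y))$. (b) The proposed contradiction does not close. After translating $\gamma_n(x_n)$ into a compact fundamental domain, the segments $\gamma_n([x_n,y_n])$ are not confined to any bounded region --- they live in $N_R(g_n)$, which is unbounded, and $d(\gamma_n(x_n),\gamma_n(y_n))$ is only sublinear in $y_n-x_n$, not bounded --- and Arzel\`a--Ascoli gives convergence of the lifted flow only on compact time intervals, so knowing $\widetilde{\rF}_{t_n}(p_n)\to q$ with $t_n\to\infty$ tells you nothing about $\widetilde{\rF}_t(p_\infty)$ for the limit point $p_\infty$. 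The limiting flowline is just another honest flowline with distinct endpoints; no contradiction appears. What is actually needed, and what constitutes the content of Fenley--Mosher's Theorem B, is a uniform-progress statement: using continuity of $e^\pm$, the hypothesis $e^+\neq e^-$, and compactness of $M$, one shows there exist $T,\delta>0$ such that for every $p$ the orthogonal projection of $\widetilde{\rF}_T(p)$ to $g_{l(p)}$ has advanced by at least $\delta$ toward $e^+(p)$ relative to that of $p$; iterating gives linear progress, hence the local quasigeodesic estimate that Gromov's lemma globalizes. Without that step the argument is not merely unfinished but structurally incomplete.
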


The ``if'' direction is \cite{Fenley_Mosher}, Theorem B and the ``only if'' direction is \cite{Calegari}, Lemma 4.3.

\section{Main results}\label{sec:Outline}
In this section we state and prove our main theorems modulo certain technical details which are relegated to Sections~\ref{sec:Ends}-\ref{sec:UCandEC}.

\subsection{The orbit space}
Fix a closed hyperbolic 3-manifold $M$ with a quasigeodesic flow $\rF$. Lift to a flow $\widetilde{\rF}$ on the universal cover $\widetilde{M} \simeq \bH^3$. The \emph{orbit space} $\orb$ is the set of flowlines in $\widetilde{\rF}$ with the quotient topology obtained from $\widetilde{M}$ by collapsing each flowline to a point. The action of $\pi_1(M)$ on $\bH^3$ preserves the foliation $\widetilde{\rF}$, so it descends to an action on $\orb$. The orbit space is homeomorphic to the plane (\cite{Calegari}, Theorem 3.12) and the two \emph{endpoint maps}
\[ e^\pm: \orb \to \partial \bH^3 \]
that send each flowline to its positive/negative endpoint in $\partial \bH^3$ are continuous (\cite{Calegari}, Lemma 4.3).

The maps $e^\pm$ are between manifolds of the same dimension. However, they are far from being locally injective. In fact:

\begin{proposition}[Calegari, \cite{Calegari}, Lemma 4.8]\label{prop:PreimageUnbounded} 
Let $\rF$ be a quasigeodesic flow in a closed hyperbolic 3-manifold $M$ and let $p \in \partial \bH^3$. Then each component of $(e^\pm)^{-1}(p)$ is unbounded.
\end{proposition}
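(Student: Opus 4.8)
The plan is to argue by contradiction. Suppose some component $K$ of $(e^+)^{-1}(p)$ is bounded; since $\orb$ is locally connected and $(e^+)^{-1}(p)$ is closed, $K$ is then closed in $\orb$, hence compact. I would break the proof into a plane-topology reduction, a geometric step in $\bH^3$, and a final contradiction coming from the deck action.

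First I would enclose $K$ in a smooth Jordan curve that avoids \emph{both} endpoint preimages. The sets $(e^+)^{-1}(p)$ and $(e^-)^{-1}(p)$ are closed and disjoint, since quasigeodesics have distinct endpoints. Using the standard fact that a compact component of a closed subset of the plane is an intersection of relatively clopen subsets, enlarge $K$ to a set $A_0$ that is compact and relatively clopen in $(e^+)^{-1}(p)$. Then $A_0$ is a compact set disjoint from the closed set $\big((e^+)^{-1}(p)\setminus A_0\big)\cup(e^-)^{-1}(p)$, so it can be separated off by a smooth Jordan curve $J\subseteq\orb$ missing $(e^+)^{-1}(p)\cup(e^-)^{-1}(p)$; write $\overline\Delta$ for the closed disc it bounds, so $K\subseteq\operatorname{int}\overline\Delta$.

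Next I would lift to $\bH^3$. Let $\pi\colon\bH^3\to\orb$ collapse flowlines, so $\pi^{-1}(J)$ is a flow-saturated surface separating the saturated open sets $\pi^{-1}(\operatorname{int}\overline\Delta)$ and $\pi^{-1}(\orb\setminus\overline\Delta)$. By Lemma~\ref{lemma:UniformlyQuasigeodesic} the flowlines over $J$ are uniformly quasigeodesic, and since $J$ is compact and misses $(e^\pm)^{-1}(p)$ the endpoints $e^\pm(J)$ are compact subsets of $\partial\bH^3\setminus\{p\}$; by Proposition~\ref{prop:QuasisInH} each of these flowlines lies within a uniform Hausdorff distance of the geodesic between its endpoints, and — working in the upper half-space model based at $p$ — those geodesics have uniformly bounded Euclidean height. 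Hence $\pi^{-1}(J)$ avoids some horoball $B$ at $p$. A flowline over a point of $K$ limits to $p$, so it meets $B$; as $B$ is connected and disjoint from $\pi^{-1}(J)$, we get $B\subseteq\pi^{-1}(\operatorname{int}\overline\Delta)$. Since every flowline over $(e^+)^{-1}(p)$ or $(e^-)^{-1}(p)$ eventually enters $B$, this forces $(e^+)^{-1}(p)\cup(e^-)^{-1}(p)\subseteq\operatorname{int}\overline\Delta$, i.e.\ both preimages are bounded.

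The hard part will be converting this into a genuine contradiction. Using cocompactness of the $\pi_1(M)$-action on $\bH^3$ and a continuous section $\sigma$ of $\pi$, the horoball $B$ contains balls of every radius, so one can find $g\in\pi_1(M)\setminus\{1\}$ with $g\,\sigma(\overline\Delta)\subseteq B$; each flowline $l_{gx}$ with $x\in\overline\Delta$ then meets $B\subseteq\pi^{-1}(\operatorname{int}\overline\Delta)$, giving $g\overline\Delta\subseteq\operatorname{int}\overline\Delta$, and Brouwer's theorem produces a fixed point of $g$ in $\orb$ — a closed orbit of $\rF$. To upgrade this to an outright contradiction one must push further: taking $B$ deeper forces $g\overline\Delta$, hence its fixed point, into arbitrarily small neighbourhoods of the compact set $(e^\pm)^{-1}(p)$, while the two endpoints of the corresponding flowline must be the fixed points of the loxodromic $g$; letting $g\to\infty$ and using that $\pi_1(M)$ acts as a minimal convergence group on $\partial\bH^3$, together with the continuity and equivariance of $e^\pm$, rules this configuration out. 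I expect this last step to be the main obstacle, precisely because the action of $\pi_1(M)$ on $\orb$ need not be properly discontinuous (the leaf space of $\rF$ is typically non-Hausdorff), so the contradiction has to be extracted from the convergence dynamics on $\partial\bH^3$ rather than from any discreteness in $\orb$. The statement for $e^-$ then follows by applying the argument to the time-reversed flow.
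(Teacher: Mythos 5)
This proposition is not proved in the paper at all --- it is imported from Calegari (\cite{Calegari}, Lemma~4.8) --- so there is no in-paper argument to compare against and your proposal must stand on its own. Its first two-thirds are sound and constitute a real reduction: separating $A_0$ by a curve $J$ that misses \emph{both} $(e^+)^{-1}(p)$ and $(e^-)^{-1}(p)$ (this is exactly where both preimages must be excluded, and you do exclude them), observing that the flowlines over $J$ therefore fellow-travel geodesics with both endpoints in a compact subset of $\partial\bH^3\setminus\{p\}$ and hence avoid a horoball $B$ at $p$, and concluding by connectedness that $B\subseteq\pi^{-1}(\operatorname{int}\overline\Delta)$, are all correct. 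In fact you can extract slightly more than you state: since $J$ was chosen to put $\bigl((e^+)^{-1}(p)\setminus A_0\bigr)\cup(e^-)^{-1}(p)$ in the exterior of $J$ while the horoball argument puts it in the interior, that set is empty, so the one configuration left to kill is that $(e^+)^{-1}(p)=A_0$ is compact and $(e^-)^{-1}(p)=\emptyset$.

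The genuine gap is the final step, which you flag yourself but do not close, and the route you sketch will not close it. Producing a fixed point of some $g$ in $\orb$ via Brouwer is not a contradiction: many quasigeodesic flows (pseudo-Anosov flows, suspension flows) have closed orbits, and the proposition must hold for them too. And the residual configuration is internally consistent with every piece of convergence-group data you invoke: for each $g$ with $g\,\sigma(\overline\Delta)\subseteq B$ the fixed flowline $l_q$ indeed has $\{e^+(q),e^-(q)\}=\{a_g,r_g\}$, and as $B$ is pushed deeper these fixed points and the values $e^\pm(q)$ simply accumulate at $p$ --- which is exactly what one should expect, since $p\in e^+(\overline\Delta)$ already and since the visual shadow of a deep horoball at $p$, seen from the fixed compact set $\sigma(\overline\Delta)$, is itself a small neighbourhood of $p$. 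Nothing in the continuity or equivariance of $e^\pm$, nor in minimality or the convergence property of the boundary action, forbids $(e^+)^{-1}(p)$ from being compact with $(e^-)^{-1}(p)$ empty; note also that one cannot appeal to surjectivity of $e^-$ here, as only density of the image is available at this stage. A genuinely new idea is needed to rule out this last case, so as written the proposal establishes only the weaker statement that a bounded component of $(e^+)^{-1}(p)$ forces all of $(e^+)^{-1}(p)\cup(e^-)^{-1}(p)$ to be bounded.
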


The point preimages of the maps $e^\pm$ give rise to interesting structure on $\orb$.

\begin{definition}
A closed, connected, unbounded set in the plane is called an \emph{unbounded continuum}.

An \emph{unbounded decomposition} of the plane $P$ is a collection of unbounded continua covering $P$ such that distinct decomposition elements are disjoint.

A \emph{generalized unbounded decomposition} of the plane $P$ is a collection of unbounded continua covering $P$ such that distinct decomposition elements intersect in a compact set.
\end{definition}

The sets
\[ \cD^\pm := \{\text{components of } (e^\pm)^{-1}(p) | p \in \partial \bH^3 \}, \]
are unbounded decompositions of the orbit space by Proposition~\ref{prop:PreimageUnbounded}. The set
\[ \cD = \cD^+ \cup \cD^- \]
is a generalized unbounded decomposition by the following lemma.

\begin{lemma} \label{lem:CompactSetOfFlowlines}
Let $A$ be a compact subset of the space $\partial \bH^3 \times \partial \bH^3 \setminus \Delta$ where $\Delta $ is the diagonal. Then the preimage of $A$ under the map
\[ e^+ \times e^-: \orb \to \partial \bH^3 \times \partial \bH^3 \]
is compact.
\end{lemma}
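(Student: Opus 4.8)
The plan is to show that $(e^+ \times e^-)^{-1}(A)$ is closed and that it sits inside a compact set, so that it is itself compact. Closedness is immediate: the product map $e^+ \times e^- : \orb \to \partial\bH^3 \times \partial\bH^3$ is continuous (being the product of the two continuous endpoint maps), and $A$ is closed in $\partial\bH^3 \times \partial\bH^3 \setminus \Delta$, hence closed in $\partial\bH^3 \times \partial\bH^3$ because it is compact; thus its preimage is closed in $\orb$. The substantive point is precompactness, and here I would pass from the orbit space to $\widetilde M \simeq \bH^3$. Let $q : \widetilde M \to \orb$ be the quotient map collapsing flowlines. A point $x \in \orb$ with $(e^+\times e^-)(x) \in A$ corresponds to a flowline $\ell_x$ of $\widetilde\rF$ whose pair of endpoints $(e^+(x), e^-(x))$ lies in $A$; since $A$ is a compact subset of the complement of the diagonal, there is a lower bound $\delta > 0$ on the visual distance between the two endpoints, uniformly over all such flowlines.

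The key geometric input is Proposition~\ref{prop:QuasisInH} together with the uniform quasigeodesic constants supplied by Lemma~\ref{lemma:UniformlyQuasigeodesic}: every flowline of $\widetilde\rF$ is a $(k,\epsilon)$-quasigeodesic for fixed $k,\epsilon$, hence lies within Hausdorff distance $C_{k,\epsilon}$ of the bi-infinite geodesic joining its two endpoints. Now the set of geodesics in $\bH^3$ whose endpoint pairs lie in $A$ is compact (it is the image of the compact set $A$ under the homeomorphism between $(\partial\bH^3\times\partial\bH^3)\setminus\Delta$ and the space of geodesics), so all these geodesics meet a fixed compact ball $B \subset \bH^3$; enlarging $B$ by $C_{k,\epsilon}$, every flowline $\ell_x$ with $(e^+\times e^-)(x)\in A$ intersects the compact set $B' := N_{C_{k,\epsilon}}(B)$. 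Therefore $q^{-1}\big((e^+\times e^-)^{-1}(A)\big)$ is a union of flowlines each meeting $B'$, so it is contained in the saturation $q^{-1}(q(B'))$. It remains to see that $q(B')$ is compact in $\orb$, which is immediate since $q$ is continuous and $B'$ is compact; and $(e^+\times e^-)^{-1}(A) \subseteq q(B')$, exhibiting it as a closed subset of a compact set, hence compact.

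The main obstacle I anticipate is making the reduction from flowlines to their associated geodesics fully rigorous, specifically justifying that the correspondence $x \mapsto (\text{geodesic between } e^+(x), e^-(x))$ lands in a compact family and that ``within Hausdorff distance $C_{k,\epsilon}$'' can be leveraged to conclude each flowline meets a fixed compact set. One has to be a little careful: Proposition~\ref{prop:QuasisInH} gives that $\ell_x$ lies in the $C_{k,\epsilon}$-neighborhood of its geodesic, but the flowline is bi-infinite, so one cannot conclude boundedness of $\ell_x$ itself — only that it passes near the compact ``core'' region where the relevant geodesics concentrate. The honest formulation is the one used above: $\ell_x \cap B' \neq \emptyset$, which suffices because we only need $(e^+\times e^-)^{-1}(A)$, a set of \emph{orbits}, not a set in $\widetilde M$, to be precompact. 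A secondary technical point is confirming that the space of geodesics of $\bH^3$ with the topology inherited from $\orb$-style quotients agrees with $(\partial\bH^3\times\partial\bH^3)\setminus\Delta$ as a topological space, so that compactness of $A$ really does transfer; this is standard hyperbolic geometry but should be cited or stated explicitly.
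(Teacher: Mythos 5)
Your proof is correct and follows essentially the same route as the paper: uniform quasigeodesity plus Proposition~\ref{prop:QuasisInH} force every relevant flowline to meet a fixed compact neighborhood $L$ of a compact set hit by all the corresponding geodesics, and the preimage then sits inside the compact projection of $L$ to $\orb$. Your explicit verification that the preimage is also closed is a small but welcome addition that the paper's proof leaves implicit.
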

\begin{proof}
There is a compact set $K \subset \bH^3$ that intersects every geodesic whose endpoints are a pair in $A$. Recall that there is a uniform constant $C$ such that each flowline has Hausdorff distance at most $C$ from the geodesic with the same endpoints (Lemma~\ref{lemma:UniformlyQuasigeodesic} and Proposition~\ref{prop:QuasisInH}), so the flowlines whose ends are pairs in $A$ all intersect the $C$-neighborhood, $L$, of $K$. The preimage of $A$ is contained in the projection of $L$ to the orbit space and hence compact.
\end{proof}

In the sequel we will concentrate on  $\cD^+$ for notational simplicity. Everything we say works, \emph{mutatis mutandis}, for $\cD^-$ and $\cD$.

\subsection{Universal circles for quasigeodesic flows}
To each decomposition element $K \in \cD^+$ we associate the set of \emph{ends} $\cE(K)$ (see Section~\ref{sec:Ends}, especially Definition~\ref{def:Ends} for a discussion of ends and a precise definition). We lump together the ends of all of the positive decomposition elements in one set
\[ \cE^+ := \bigcup_{K \in \cD^+} \cE(K). \]
The action of $\pi_1(M)$ on $\orb$ preserves the decomposition $\cD^+$ so it induces an action on $\cE^+$.

In Section~\ref{sec:CO} we will show that the sets of ends $\cE^+$ comes with a natural circular order that is preserved by the action of $\pi_1(M)$. This has the nice property that we can tell whether $K \in \cD^+$ separates $L, M \in \cD^+$ in $\orb$ by whether $\cE(K)$ separates $\cE(L)$ and $\cE(M)$ in the circular order (Proposition~\ref{prop:DetectSeparation}). In addition, after taking the order completion and collapsing some intervals (Section~\ref{sec:UniversalCircles}) we can form the \emph{positive universal circle} $S_u^+$, which is homeomorphic to $S^1$. There is an order-preserving map 
\[ \phi: \cE^+ \to S_u^+, \]
with dense image and a natural \emph{faithful} action of $\pi_1(M)$ on $S_u^+$ that is equivariant with respect to $\phi$.

In Section~\ref{sec:EndCompactification} we show that there is a natural \emph{end compactification} $\overline{\orb}^+$ of $\orb$ with respect to the decomposition $\cD^+$. This is homeomorphic to a closed disc with boundary $\partial \overline{\orb}^+ = S^+_u$. The action of $\pi_1(M)$ on $\orb$ extends to an action on $\overline{\orb}^+$ that restricts to the universal circle action on $S^+_u$.

Replacing $\cD^+$ with $\cD^-$ or $\cD$ yields the compactifications $\overline{\orb}^-$ and $\overline{\orb}$.

\subsection{Closed orbits}\label{sec:ClosedOrbits}
A fixed point $p \in S^1$ of $g \in \text{Homeo}^+(S^1)$ is \emph{attracting} if there is a neighborhood $U$ of $p$ such that for every open interval $I \subset U$ containing $p$ we have $\overline{g(I)} \subset I$. It is \emph{repelling} if instead we have $\overline{I} \subset g(I)$. A fixed point is \emph{indifferent} if it is neither attracting nor repelling. If $g \in \text{Homeo}^+(S^1)$ has two fixed points then they are either both indifferent or an attracting-repelling pair.

\begin{theorem}\label{thm:main}
Let $M$ be a closed hyperbolic 3-manifold with a quasigeodesic flow $\rF$. Fix an element $g \in \pi_1(M)$ and suppose that $g$ acts on $S^+_u$ (or $S^-_u$, or $S_u$) with either more than two fixed points, two indifferent fixed points, or no fixed points. Then $\rF$ has a closed orbit in the free homotopy class of $g$.
\end{theorem}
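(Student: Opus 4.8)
The plan is to reduce the statement to the existence of a fixed point of $g$ in the orbit space $\orb$, and to produce one by confronting the action of $g$ on the closed disc $\overline{\orb}^+$ with its action as a loxodromic isometry of $\bH^3$. A closed orbit of $\rF$ in the free homotopy class of $g$ is exactly a flowline $l$ of $\widetilde{\rF}$ with $gl=l$, that is, a fixed point of $g$ in $\orb$: since $g$ acts freely on $\widetilde{M}$ it translates any $g$-invariant flowline, and $l$ modulo its (infinite cyclic) stabiliser maps to the required closed orbit. So it suffices to find $x_0\in\orb$ with $gx_0=x_0$. Now $g$ acts on $\overline{\orb}^+$ (and likewise on $\overline{\orb}^-$ and $\overline{\orb}$) as an orientation-preserving homeomorphism of a disc whose restriction to $\partial\overline{\orb}^+=S^+_u$ is the universal circle action. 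If $g$ has \emph{no} fixed point on $S^+_u$, then Brouwer's fixed point theorem applied to $\overline{\orb}^+$ gives a fixed point of $g$, which must lie in the interior $\orb$; this disposes of the case ``no fixed points''. In the remaining two cases $g$ fixes points of $S^+_u$, and the content is to convert this into an interior fixed point.

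The mechanism I would use is as follows. Because $\pi_1(M)$ is a cocompact Kleinian group it contains no parabolics, so every nontrivial $g$ is loxodromic, with attracting and repelling fixed points $\xi^+,\xi^-\in\partial\bH^3$, and the endpoint maps $e^\pm$ are $g$-equivariant. The topological engine is the Brouwer plane translation theorem in the form: \emph{an orientation-preserving homeomorphism of $\bR^2$ without a fixed point has no nonempty compact invariant set}; so it is enough to exhibit a nonempty compact $g$-invariant subset of $\orb$, and Lemma~\ref{lem:CompactSetOfFlowlines} is designed precisely to produce compact subsets of $\orb$ from endpoint data, since the $(e^+\times e^-)$-preimage of any compact subset of $\partial\bH^3\times\partial\bH^3\setminus\Delta$ is compact in $\orb$. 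Concretely, suppose $g$ fixes a decomposition element $K\in\cD^+$. Then $g$ fixes $e^+(K)\in\partial\bH^3$, so $e^+(K)\in\{\xi^+,\xi^-\}$; say $e^+(K)=\xi^+$. Playing the contraction of $g^{-1}$ toward $\xi^-$ in the chart $\partial\bH^3\setminus\{\xi^+\}$ against the fact that $K$ is connected and \emph{unbounded} (Proposition~\ref{prop:PreimageUnbounded}), one shows $K$ must contain a flowline with endpoints $\xi^+$ and $\xi^-$, i.e.\ one shadowing the axis of $g$; then $(e^+\times e^-)^{-1}(\{(\xi^+,\xi^-)\})$ is a nonempty compact $g$-invariant subset of $\orb$, so $g$ has an interior fixed point. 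Thus \emph{$g$ fixing any element of $\cD^+$ (or $\cD^-$, or $\cD$) already forces a closed orbit in the class of $g$}; carrying out the italicised geometric step is part of the technical work.

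It then remains to extract a $g$-fixed decomposition element -- or directly a compact invariant set -- from the hypothesis that $g$ acts on $S^+_u$ with more than two fixed points, or with two indifferent ones. Here I would use the explicit construction of $S^+_u$ from the circularly ordered set of ends $\cE^+$ (Sections~\ref{sec:CO}--\ref{sec:UniversalCircles}): the $g$-equivariant order-preserving map $\phi\colon\cE^+\to S^+_u$ has dense image, and the cyclic order on ends records separation of decomposition elements in $\orb$ (Proposition~\ref{prop:DetectSeparation}). A $g$-fixed point of $S^+_u$ whose $\phi$-fibre is a single end is a $g$-fixed end, hence (as distinct decomposition elements have disjoint end sets) lies on a $g$-fixed decomposition element, so by the previous paragraph we are done. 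This reduces matters to $g$-fixed points of $S^+_u$ lying in gaps or collapsed intervals of ends; given three of them, or two with the one-sided (``semi-stable'') local picture forced by indifference, Proposition~\ref{prop:DetectSeparation} should yield nested $g$-invariant families of decomposition elements cutting $\orb$ into $g$-invariant regions with compact frontier, and a final application of the plane translation theorem to those regions (again via Lemma~\ref{lem:CompactSetOfFlowlines}) locates a fixed point in $\orb$. \textbf{The main obstacle} is exactly this passage from a one-dimensional hypothesis about $\mathrm{Fix}(g|_{S^+_u})$ to a genuine $g$-invariant compact set in the plane $\orb$; it rests on the fine structure of the end compactification and of the gaps and collapsed intervals of the universal circle developed in Sections~\ref{sec:Ends}--\ref{sec:UCandEC}. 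Once that is in hand the rest is soft, using only Brouwer's fixed point theorem, the Brouwer plane translation theorem, the absence of parabolics, and Lemma~\ref{lem:CompactSetOfFlowlines}.
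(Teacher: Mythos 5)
Your reduction to a fixed point of $g$ in $\orb$, your treatment of the fixed-point-free case via Brouwer's fixed point theorem on the disc $\overline{\orb}^+$, and your observation that a flowline with an endpoint at $a_g$ or $r_g$ has a bounded $g^{\pm1}$-orbit in $\orb$ (via Lemma~\ref{lem:CompactSetOfFlowlines}) and hence forces a fixed point by the plane translation theorem, all match the paper. (You do not need the stronger italicised claim that a $g$-fixed $K\in\cD^+$ contains a flowline shadowing the axis: a single flowline $\gamma\subset K$ with $e^+(\gamma)=\xi^+$ already has bounded backward orbit, since $e^-(\gamma)\neq e^+(\gamma)$ forces $e^-(g^{-n}\gamma)\to\xi^-$.)

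The genuine gap is exactly the step you flag as ``the main obstacle'': converting fixed points of $g$ on $S_u^+$ into something compact and $g$-invariant in $\orb$. Your route --- find a $g$-fixed end and hence a $g$-fixed decomposition element, and otherwise analyse gaps and collapsed intervals to get ``$g$-invariant regions with compact frontier'' --- is not carried out, and as sketched it cannot work as stated: the part you do prove makes no use of the hypotheses (it applies verbatim to a single fixed point, or to an attracting--repelling pair whose fibres are single ends, which is the M\"obius-like case the theorem must exclude), while in the gap/collapsed-interval cases the frontiers of your regions are unbounded and the plane translation theorem does not apply to subregions of the plane. The paper sidesteps fixed ends entirely. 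Given three fixed points $x,y,z$ on $S_u^+$ it chooses arbitrary $K,L,M\in\cD^+$ with ends in $(x,y)$, $(y,z)$, $(z,x)$, joins them by a compact set $A$, and sets $B=A\cup K\cup L\cup M$, so that $e^+(B)=e^+(A)$ is compact. If $e^+(B)$ missed $\{a_g,r_g\}$, north--south dynamics on $\partial\bH^3$ would give $B\cap g^n(B)=\emptyset$ for large $n$; but the fixed points force the ends of $B$ and $g^n(B)$ to link in $S_u^+$, so $B$ and $g^n(B)$ must intersect by Proposition~\ref{prop:DetectSeparation}. Hence some flowline in $B$ has positive endpoint at $a_g$ or $r_g$, and the first step applies. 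The two-indifferent-fixed-points case is the same with two elements $K,L$; indifference is precisely what makes the ends of $g^n(B)$ link with those of $B$, and this is where the hypothesis --- absent from your sketch --- actually enters. Without this linking argument, or a worked-out substitute, the substantive cases of the theorem remain unproven.
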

\begin{proof}
We will prove this for $S^+_u$. The argument is identical for $S^-_u$ and $S_u$.

Note that a closed orbit in the free homotopy class of $g$ is a point in $\orb$ fixed by $g$.

The action of $g$ on $\partial \bH^3$ has an attracting fixed point $a_g$ and a repelling fixed point $r_g$. Suppose there is a flowline $\gamma \in \widetilde{\rF}$ with an endpoint at either $a_g$ or $r_g$. Then $g$ has a fixed point in $\orb$. Indeed, let $\gamma$ have an endpoint at $r_g$ (replace $g$ by $g^{-1}$ if it has an endpoint at $a_g$). Then the endpoints of $g^n(\gamma)$ for $n$ positive approach $(a_g, r_g)$ so the forward orbit of $\gamma$ is bounded in $\orb$ by Lemma~\ref{lem:CompactSetOfFlowlines}. The Brouwer plane translation theorem implies that $g$ must have a fixed point in $\orb$ (see \cite{Franks}).

Now suppose that $g$ has at least three fixed points $x, y, z \in S^+_u$. Choose decomposition elements $K, L, M \in \cD^+$ that have ends in each of the respective oriented intervals $(x, y)$, $(y, z)$, and $(z, x)$ in $S^+_u$. Let $A$ be a compact set intersecting $K$, $L$, and $M$ and set $B = A \cup K \cup L \cup M$. The image of $B$ under $e^+$ is compact since $e^+(B) = e^+(A)$ and $A$ is compact. Suppose that $e^+(B)$ does not contain $a_g$ or $r_g$. Then for sufficiently large $n$, $e^+(B) \cap g^n(e^+(B)) = \emptyset$ and so $B \cap g^n(B) = \emptyset$. But this is impossible since the ends of $B$ and $g^n(B)$ link in $S^+_u$ and hence $B$ and $g^n(B)$ must intersect by Proposition~\ref{prop:DetectSeparation}. Therefore, either $a_g$ or $r_g$ are in $e^+(B)$ and the preceeding discussion yields a closed orbit. See Figure~\ref{fig:3fixedpoints} for an illustration of this argument.

\begin{figure}[h]
	\labellist
	\small\hair 2pt
	\pinlabel $\orb$ [bl] at 12 315
	\pinlabel $x$ [b] at 74 326
	\pinlabel $z$ [tl] at 142 219
	\pinlabel $y$ [tr] at 11 211
	\pinlabel $A$ [tl] at 92 240
	\pinlabel $K$ [bl] at 32 290
	\pinlabel $L$ [tr] at 81 197
	\pinlabel $M$ [r] at 124 296

	\pinlabel $e^+$ [b] at 184 284

	\pinlabel $\partial \bH^3$ [br] at 231 315
	\pinlabel $a_g$ [b] at 292 326
	\pinlabel $r_g$ [t] at 292 177
	\pinlabel $e^+(B)$ [l] at 267 282

	\pinlabel $x$ [b] at 74 152
	\pinlabel $z$ [tl] at 142 41
	\pinlabel $y$ [tr] at 11 34
	\pinlabel $g^n(A)$ [bl] at 82 115
	\pinlabel $g^n(K)$ [tl] at 61 142
	\pinlabel $g^n(L)$ [r] at 36 72
	\pinlabel $g^n(M)$ [tr] at 131 72

	\pinlabel $e^+$ [b] at 184 114

	\pinlabel $a_g$ [b] at 292 153
	\pinlabel $r_g$ [t] at 292 1
	\pinlabel $e^+(g^n(B))$ [tl] at 285 132
	\endlabellist
	\centering
		\includegraphics{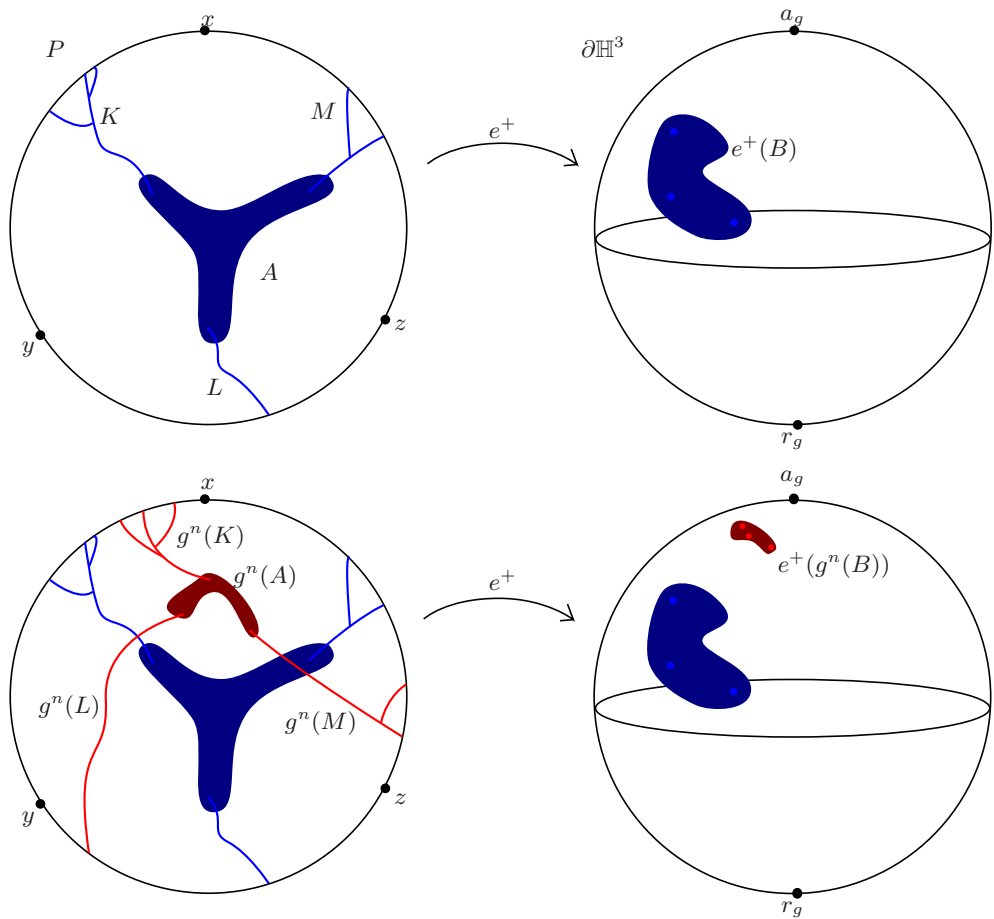}
	
	\caption{The argument for closed orbits when $g$ has three fixed points on $S^+_u$.} \label{fig:3fixedpoints}
\end{figure}

The argument for two indifferent fixed points is similar. See Figure~\ref{fig:2fixedpoints}.

\begin{figure}[h]
	\labellist
	\small\hair 2pt
	\pinlabel $\orb$ at 12 146

	\pinlabel $g^n(A)$ [t] at 86 50
	\pinlabel $g^n(K)$ [bl] at 31 109
	\pinlabel $g^n(L)$ [b] at 136 59
	\endlabellist
	\centering
		\includegraphics{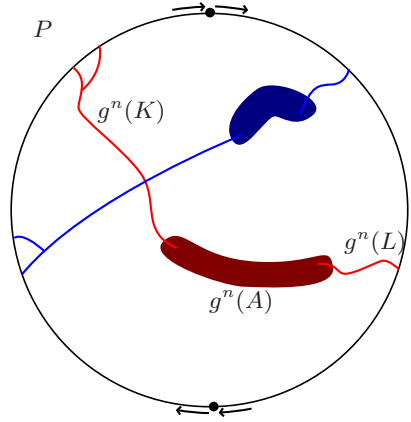}
	\caption{The corresponding argument for two indifferent fixed points.} \label{fig:2fixedpoints}
\end{figure}

Suppose $g$ has no fixed points in $S^+_u$. By the Compactification Theorem the action of $g$ on $S^+_u$ is identical to the action of $g$ on $\partial \overline{\orb}$, so by the Brouwer fixed point theorem $g$ fixes a point in $\orb$.
\end{proof}

A group acting on the circle is a rotationless M\"obius-like group if and only if each element acts with either one fixed point or two fixed points in an attracting-repelling pair. The following is therefore an immediate corollary of Theorem~\ref{thm:main}.

\begin{mob_theorem}
Let $\rF$ be a quasigeodesic flow on a closed hyperbolic 3-manifold $M$. Suppose that $\pi_1(M)$ acting on the universal circle $S^+_u$ (or $S^-_u$ or $S_u$) is not a rotationless M\"obius-like group. Then $\rF$ has a closed orbit.
\end{mob_theorem}

On the other hand:
\begin{conj_theorem}
Let $\rF$ be a quasigeodesic flow on a closed hyperbolic 3-manifold $M$. Then the action of $\pi_1(M)$ on $S^+_u$ (or $S^-_u$ or $S_u$) is not conjugate into $PSL(2, \bR)$.
\end{conj_theorem}
\begin{proof}
Suppose that the action of $\pi_1(M)$ on $S^+_u$ were conjugate into $PSL(2, \bR)$. It cannot be a discrete subgroup of $PSL(2, \bR)$ since then $\pi_1(M)$ would be isomorphic to a surface group. The closure is a Lie group by the Cartan lemma, so it must be all of $PSL(2, \bR)$ since all proper Lie subgroups of $PSL(2, \bR)$ are solvable. But then some element would act without fixed points since every fixed point-free element of $PSL(2, \bR)$ has a neighborhood containing only fixed point-free elements. This contradicts the M\"obius-like Theorem.
\end{proof}

\section{Ends}\label{sec:Ends}
The next few sections are mainly technical, and dedicated to proving the necessary structure theorems to justify the arguments in Section~\ref{sec:Outline}. The decomposition elements that arise from the endpoint maps of a quasigeodesic flow can be arbitrarily complicated closed subsets of the plane so some care is needed.

\begin{definition} \label{def:Ends}
Let $P$ be homeomorphic to the plane and let $K \subset P$. An \emph{end} of $K$ is a map
\[ \kappa: \{\text{bounded subsets of } P\} \to \{\text{nonempty subsets of } K\} \]
such that
\begin{enumerate}
	\item for each bounded set $D \subset P$, $\kappa(D)$ is a connected component of $K \setminus D$, and
	\item if $D' \supset D$ are bounded subsets of $P$ then $\kappa(D') \subset \kappa(D)$.
\end{enumerate}

We write $\cE(K)$ for the set of ends of $K$. If $\cD$ is a collection of subsets of the plane we write $\cE(\cD) := \cup_{K \in \cD} \cE(K)$.
\end{definition}

To specify an end it is enough to keep track its values on any bounded exhaustion of $P$. Indeed, fix such a bounded exhaustion $(D_i)_{i=0}^{\infty}$. For any bounded $D \subset P$ there is some $i$ such that $D_i \supset D$. Then $\kappa(D)$ is the component of $K \setminus D$ containing $\kappa(D_i)$.

Conversely, we can use this to explicitly specify an end. Let $(D_i)$ be a bounded exhaustion of $P$ and suppose there is a sequence $(K_i)$ such that each $K_i$ is a component of $K \setminus D_i$ and $K_{i+1} \subset K_i$ for all $i$. Then there is a unique end $\kappa \subset \cE(K)$ with $\kappa(D_i) = K_i$ for all $i$.

\begin{lemma}
Let $K \subset P$ be an unbounded continuum in the plane and let $D$ be a bounded set that intersects $K$. Then some component of $K \setminus D$ is unbounded.
\end{lemma}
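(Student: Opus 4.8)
The plan is to argue by contradiction: suppose every component of $K \setminus D$ is bounded. First I would fix a closed ball $B$ large enough that $B \supset D$ and $B$ meets $K$ (so $B \cap K \neq \emptyset$); replacing $D$ by $B$ only enlarges $D$, and any component of $K \setminus B$ is contained in a component of $K \setminus D$, so if the latter are all bounded then so are the former. Thus it suffices to derive a contradiction when $D = B$ is a closed ball. Since $K$ is unbounded but $K \cap B$ is compact (closed and bounded), $K \setminus B$ is nonempty; write $K \setminus B = \bigsqcup_\alpha C_\alpha$ as its decomposition into connected components, each assumed bounded.

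Next I would use connectedness of $K$ together with the structure of these components near the boundary sphere $\partial B$. The key point is a standard fact about continua: if $C$ is a connected component of the open-in-$K$ set $K \setminus B$, then $\overline{C} \subset K$ and $\overline{C} \setminus C \subset K \cap \partial B$; moreover $\overline{C}$ is itself a continuum (closure of a connected set), and by the boundary-bumping lemma for continua $\overline{C}$ must meet $\partial B$, i.e. $\overline{C} \cap \partial B \neq \emptyset$. Since $K \cap B$ is compact and $K$ is connected, $K = (K \cap B) \cup \bigcup_\alpha \overline{C_\alpha}$, with each $\overline{C_\alpha}$ a bounded continuum attached to $K \cap B$ along a nonempty subset of the compact set $K \cap \partial B$.

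Then I would get the contradiction from boundedness. If every $\overline{C_\alpha}$ is bounded, each is contained in some ball $B_{R_\alpha}$. The family $\{\overline{C_\alpha}\}$ need not be finite, so the radii $R_\alpha$ need not be bounded above a priori — this is the main obstacle, and it is exactly why the hypothesis that $D$ (hence $B$) is bounded is not by itself enough without a compactness input. To handle it I would invoke that $K$ is \emph{closed} in $P$: suppose $K$ is unbounded, so there is a sequence $x_n \in K$ with $|x_n| \to \infty$; discarding finitely many, $x_n \in K \setminus B$, so each $x_n$ lies in some component $C_{\alpha(n)}$. If all components were bounded, infinitely many distinct components $C_{\alpha(n)}$ would be needed (a single bounded component cannot contain the unbounded sequence), and these would accumulate; but each $\overline{C_{\alpha(n)}}$ meets the compact set $K \cap \partial B$, so pick $y_n \in \overline{C_{\alpha(n)}} \cap \partial B$. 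Passing to a subsequence, $y_n \to y \in K \cap \partial B$. Now I would argue that a suitable neighborhood of $y$ in $P$ has the property that $K$ restricted to it is connected and unbounded-reaching, forcing a single component to be unbounded — concretely, use that $K$ is locally connected? No: $K$ is just a continuum, so I cannot assume local connectedness.

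The cleanest route around this, which I would actually carry out, avoids component-counting: since $K$ is a connected closed set and $B$ is a closed ball, consider instead the quotient obtained by collapsing $K \cap B$ to a point, or more simply use that in the plane, a closed connected unbounded set $K$ meeting a ball $B$ must have a component of $K \setminus B$ that reaches to infinity — proved by a direct exhaustion argument. Take a nested exhaustion of $P$ by closed balls $B = B_0 \subset B_1 \subset B_2 \subset \cdots$. For each $i$, since $K$ is connected and meets both $B$ and the complement of $B_i$ (as $K$ is unbounded), there is a component $K_i$ of $K \setminus B_0$ that meets $\partial B_i$; but $K \setminus B_0$ has, for this fixed $B_0$, only finitely many components meeting any fixed $B_i$ (a continuum minus an open ball has compact pieces... this again needs an argument). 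The honest statement is that $K \setminus B_0$ has at most finitely many components meeting $\partial B_1$ only if $K$ is locally connected, which it need not be. So the real argument must be: among the components of $K \setminus B_0$, by a pigeonhole on the exhaustion $B_1 \subset B_2 \subset \cdots$ together with the fact that each component's closure meets the \emph{compact} set $K \cap \partial B_0$, one extracts a single component $C$ whose closure meets $\partial B_i$ for infinitely many $i$ — and since $C$ is connected this forces $C$ unbounded, contradicting the assumption. The main obstacle is making this pigeonhole rigorous without local connectedness; I expect the paper handles it via the compactness of $K \cap \partial B_0$ and the observation that distinct components have disjoint closures intersected with $K\setminus B_0$ but may share boundary points on $\partial B_0$, so one should instead pass to components of $K \setminus \mathrm{int}(B_0)$ or work with the "filled" ball, which is the technical point I would expect to be the crux of the actual proof.
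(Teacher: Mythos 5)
Your proposal does not close; it is a sequence of three sketches, each of which you abandon at the same critical juncture, and the final paragraph is a guess about what the paper's proof must be rather than a proof. The sound parts are the reduction to a closed ball $B$ and the observation (via boundary bumping applied to the compact continuum $\hat K = K\cup\{\infty\}$ in the one-point compactification) that the closure of each bounded component of $K\setminus B$ meets the compact set $K\cap\partial B$. But the step that actually produces an unbounded component is never carried out. The accumulation argument fails exactly where you say it does (no local connectedness), and the ``pigeonhole on the exhaustion'' is not a pigeonhole: for each $i$ there is \emph{some} component of $K\setminus B$ meeting $\partial B_i$, but the family of components may be infinite (even uncountable) and carries no compactness, so nothing forces a single component to meet $\partial B_i$ for infinitely many $i$; distinct $i$ may use pairwise distinct components, which is consistent with all of them being bounded as far as your argument goes.

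The missing idea, which is the entire content of the paper's proof, is a direct construction of an unbounded \emph{connected} subset of $K\setminus D$. Work in $\hat P\simeq S^2$ with $\hat K=K\cup\{\infty\}$; take a nested sequence of connected open neighborhoods $U_i$ of $\hat K$ with $\bigcap_i U_i=\hat K$; in each $U_i$ run an arc from a fixed point $p\in \hat K\cap D'$ to $\infty$, where $D'$ is a closed disc with $\overline D\subset \mathrm{int}(D')$; truncate each arc after its last intersection with $D'$; and take a Hausdorff limit of a convergent subsequence of these truncated arcs. The limit $A$ is a continuum contained in $\hat K\setminus D$ containing $\infty$ and some finite point $q$. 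One then passes to a minimal (irreducible) subcontinuum $A'\subset A$ joining $q$ to $\infty$ and checks that $A'\setminus\{\infty\}$ is still connected: if it split as $B\sqcup C$ separated with $q\in B$, then $B\cup\{\infty\}$ would be a smaller continuum joining $q$ to $\infty$, contradicting minimality. That connected unbounded subset of $K\setminus D$ lies in a single component, which is therefore unbounded. None of this machinery --- the neighborhood basis of $\hat K$, the Hausdorff limit of truncated arcs, the irreducible-continuum argument --- appears in your proposal, and without something equivalent the proof is incomplete.
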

\begin{proof}
We work in the one point compactification of the plane $\hat{P} \simeq S^2$ and replace $K$ by $\hat{K} = K \cup \{\infty\}$.  Let $\{U_i\}_{i=1}^\infty$ be a sequence of connected open neighborhoods of $\hat{K}$ with intersection $\cap U_i = \hat{K}$ and let $D'$ be a closed disc away from $\infty$ with $\overline{D} \subset int(D')$. Fix a point $p \in \hat{K} \cap D'$ other than $\infty$. For each $i$ we can choose an arc $\gamma_i:[0,1] \to U_i$ from $p$ to $\infty$. Take $\gamma'_i = \gamma_i([t_i, 1])$ where $t_i$ is the last intersection of $\gamma_i$ with $D'$. Let $A$ be the Hausdorff limit of a convergent subsequence of the $\gamma'_i$ and note that $A$ is a compact connected subset of $\hat{K} \setminus D$ containing $\infty$.

Now let $q$ be any point in $A$ other than $\infty$ and find a minimal continuum $A' \subset A$ containing $\{q \cup \infty\}$. Suppose that $A' \setminus \infty$ is disconnected, i.e. $A' = B \cup C$ separated. If $q \in B$ then $B \cup \{\infty\}$ is a continuum containing $\{q \cup \infty\}$ contradicting the minimality of $A'$. So $A' \setminus \{\infty\}$ is contained in an unbounded subset of $K \setminus D$.
\end{proof}

\begin{proposition}
Let $K \subset P$ be an unbounded continuum in the plane. Then $K$ has at least one end.
\end{proposition}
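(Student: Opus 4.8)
The plan is to build an end directly, by exhibiting a nested sequence of unbounded components and invoking the preceding lemma at each stage. Fix a bounded exhaustion $(D_i)_{i=0}^{\infty}$ of $P$ by closed discs with $D_i \subseteq D_{i+1}$, chosen so that $D_0 \cap K \neq \emptyset$ (possible since $K$ is nonempty). Applying the preceding lemma to the unbounded continuum $K$ and the bounded set $D_0$ gives an unbounded component $K_0$ of $K \setminus D_0$. I then argue inductively: given an unbounded component $K_i$ of $K \setminus D_i$, I produce an unbounded component $K_{i+1}$ of $K \setminus D_{i+1}$ with $K_{i+1} \subseteq K_i$. By the explicit description of ends recorded just after the definition (a nested sequence of components along a bounded exhaustion determines a unique end), the sequence $(K_i)$ then defines an end $\kappa \in \cE(K)$, which is all that is claimed.

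For the inductive step I split into two cases. If $K_i \cap D_{i+1} = \emptyset$, then $K_i$ is a connected subset of $K \setminus D_{i+1} \subseteq K \setminus D_i$ that is already a component of the larger set $K \setminus D_i$, hence, by maximality, a component of the smaller set $K \setminus D_{i+1}$ as well; it is still unbounded, so I take $K_{i+1} = K_i$. If instead $K_i \cap D_{i+1} \neq \emptyset$, I pass to the closure $\overline{K_i}$ in $P$: this is an unbounded continuum contained in $K$ (as $K$ is closed and connected), and it meets the bounded set $D_{i+1}$, so by the preceding lemma some component $C$ of $\overline{K_i} \setminus D_{i+1}$ is unbounded. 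The crucial observation is that $\overline{K_i} \setminus K_i \subseteq D_i$ — any point of $\overline{K_i}$ lying in $K \setminus D_i$ would, together with $K_i$, span a connected subset of $K \setminus D_i$ and hence already lie in the component $K_i$. Since $C$ avoids $D_{i+1} \supseteq D_i$, it avoids $\overline{K_i} \setminus K_i$, so $C \subseteq K_i$. Finally $C$ is connected and lies in $K \setminus D_{i+1}$, so it is contained in some component $K_{i+1}$ of $K \setminus D_{i+1}$, which is unbounded because it contains $C$; and $K_{i+1}$ is a connected subset of $K \setminus D_i$ meeting $K_i$ (both contain $C$), so $K_{i+1} \subseteq K_i$ by maximality. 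This closes the induction.

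The only real content is the inductive step, and within it the verification that the new unbounded component $C$ supplied by the previous lemma actually lies inside the old component $K_i$ — equivalently, the boundary-bumping observation that $\overline{K_i} \setminus K_i \subseteq D_i$. I expect that to be the step requiring the most care; everything else is bookkeeping with the exhaustion and with upward/downward maximality of components. Once that observation is in place the nesting is immediate and the end is produced.
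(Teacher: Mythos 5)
Your proof is correct and follows essentially the same strategy as the paper's: iterate the preceding lemma along a bounded exhaustion to produce a nested sequence of unbounded components, which then determines an end via the remark following the definition. You supply more detail than the paper does (closed rather than open exhaustion sets, the boundary observation $\overline{K_i} \setminus K_i \subseteq D_i$, and the maximality arguments ensuring the pieces are genuine components of $K \setminus D_{i+1}$ nested in $K_i$), but the underlying idea is the same.
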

\begin{proof}
Let $(D_i)_{i=1}^\infty$ be an exhaustion of the plane by nested bounded open sets. For each $i$ let $K_i$ be an unbounded component of $K_{i-1} \setminus D_i$, which exists by the preceeding lemma. Then there is an end $\kappa$ with $\kappa(D_i) = K_i$ for all $i$.
\end{proof}

If $A \in P$ is unbounded but not closed then $A$ does not necessarily have ends. For example:
\begin{example}\label{ex:NoEnds}
Let $A$ be the set in $\bR^2$ consisting of the line segment $A_0 = [0, 1] \times \{0\}$ together with the segments $\{1/n\} \times [0, n]$ for all integers $n \geq 1$. After removing $A_0$ there are no unbounded pieces left so $\cE(A) = \emptyset$. Also note that we can thicken each of the segments to an open neighborhood to construct an example of an open unbounded set with no ends.
\end{example}

On the other hand, if $A \subset B$ and $\cE(A) \neq \emptyset$ then $\cE(B) \neq \emptyset$. In particular, any set containing an unbounded continuum has an end. The following lemma provides a converse of this for open sets.

\begin{lemma}\label{lem:RayApproachingEnd}
Let $U \subset P$ be an open, connected set with at least one end. Then there is an embedded ray $\gamma \subset U$ that is proper in $P$.
\end{lemma}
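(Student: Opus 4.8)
The plan is to build $\gamma$ by a nested-exhaustion argument that tracks the given end $\kappa \in \cE(U)$, using the fact that $U$ is open and connected to make each approximating step an embedded arc. Fix a bounded exhaustion $(D_i)_{i=0}^{\infty}$ of $P$ by open sets with $\overline{D_i} \subset D_{i+1}$, and let $U_i := \kappa(D_i)$, so $U_i$ is a component of $U \setminus D_i$, each $U_i$ is open and connected (a component of an open subset of a surface), $U_{i+1} \subset U_i$, and each $U_i$ is unbounded. First I would choose a basepoint $p_0 \in U_0$ and inductively pick points $p_i \in U_i \setminus \overline{D_i}$; since $U_i$ is open and connected, hence path connected, and $p_i, p_{i+1}$ both lie in $U_i$, there is an embedded arc $\alpha_i$ from $p_i$ to $p_{i+1}$ inside $U_i$. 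The concatenation $\gamma_0 := \alpha_0 * \alpha_1 * \cdots$ is then a ray in $U$; the remaining work is to (a) arrange that it is embedded and (b) arrange that it is proper in $P$.

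For properness: a ray in $P$ is proper exactly when it eventually leaves every bounded set, so it suffices to force the tail $\alpha_i * \alpha_{i+1} * \cdots$ to avoid $D_{i}$ for each $i$. This is not automatic because $\alpha_i$ lies in $U_i$, which is disjoint from $D_i$ but may meet $D_{i+1}$. I would fix this by passing to a subsequence of the exhaustion: having chosen $p_i \in U_i \setminus \overline{D_i}$, the arc from $p_i$ into the next chosen point can be taken inside $U_i$, and since $U_i \setminus D_i = U_i$ is open, each compact arc $\alpha_i$ meets only finitely many $D_j$; reindex the exhaustion so that $\alpha_i \subset U_i \setminus \overline{D_{i-1}}$, which we can do because $U_i$ is unbounded and connected, so we may route $\alpha_i$ through points far out in $U_i$ while staying in the connected open set $U_i$ and crossing $\overline{D_{i-1}}$ at most within its initial point $p_i$ — and in fact by choosing $p_i$ outside $\overline{D_i}$ and working in $U_i \setminus \overline{D_{i-1}}$ (still open; still contains $p_i$ and an unbounded piece) we can keep all of $\alpha_i$ out of $D_{i-1}$. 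Then the tail beyond $\alpha_i$ avoids $D_{i-1}$, so $\gamma$ escapes every $D_i$ and is proper.

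For embeddedness: the concatenation $\gamma_0$ is a continuous ray but may have self-intersections, both within a single $\alpha_i$ (avoided by choosing the $\alpha_i$ embedded, as above) and between different $\alpha_i, \alpha_j$. Since $\alpha_i \subset U_i \setminus \overline{D_{i-1}}$ and, for $j \ge i+2$, $\alpha_j \subset U_j \subset U_{i+1} \subset \overline{D_{i}}^{\,c}$ while $\alpha_i$ ends at $p_{i+1} \in U_{i+1}$, consecutive arcs may overlap but non-consecutive ones are confined to disjoint "annular" regions $U_{i} \setminus \overline{D_i}$ once we insist $\alpha_i \subset U_i \setminus \overline{D_i}$ except for the shared endpoints $p_i \in \partial$-region and $p_{i+1}$; a standard surgery then removes self-intersections: inductively, if $\gamma$ restricted to $[0, T]$ has been made embedded and the next arc $\alpha$ re-enters $\gamma([0,T])$, replace the offending sub-arc by following $\gamma$ backward from the re-entry point, using that $P$ is a surface so a finite union of embedded arcs has a well-defined embedded sub-arc connecting any two of its points. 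Iterating and noting that each surgery only shortens the already-constructed part while the ray still escapes to infinity (properness is preserved since we only delete, never add, and the tail still meets $D_i$-complements), we obtain an embedded proper ray $\gamma \subset U$.

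The main obstacle is coordinating embeddedness with properness: the surgeries that kill self-intersections could in principle cut off the escaping tail and leave a bounded ray. I expect the right bookkeeping is to do the surgery "locally," only within the region $U_i \setminus \overline{D_{i-1}}$ where $\alpha_i$ lives, so that a surgery affecting $\alpha_i$ cannot disturb any $\alpha_j$ with $j \ge i+2$; since those later arcs carry the ray out to infinity, properness survives. Making this localization precise — in particular checking that $U_i \setminus \overline{D_{i-1}}$ can be chosen connected with an unbounded component still realizing $\kappa$, so the detours stay in a controlled piece — is the technical heart of the argument, and it is exactly where openness of $U$ (giving path-connected pieces) and the hypothesis $\cE(U) \neq \emptyset$ (giving the nested unbounded $U_i$) are both essential.
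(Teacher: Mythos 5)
Your construction of the chain of arcs $\alpha_i$ through the nested components $U_i = \kappa(D_i)$ is sound, and it is essentially the first half of the paper's proof, which connects points $x_i \in \mu(D_i)$ to $x_{i+1} \in \mu(D_{i+1})$ by connected closed sets inside $\mu(D_i)$. Two small corrections there: take the $D_i$ compact rather than open, since a component of $U \setminus D_i$ need not be open (hence need not be path-connected) when $D_i$ is open; and note that properness of the concatenated ray is automatic rather than something to be arranged --- the tail $\alpha_i * \alpha_{i+1} * \cdots$ lies in $U_i$, which is disjoint from $D_i$, so the ray eventually leaves every bounded set.

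The genuine gap is in the passage from this (possibly non-injective) proper ray to an embedded one. Your localization claim --- that $\alpha_i$ and $\alpha_j$ are confined to disjoint annular regions once $|i-j| \ge 2$ --- is false: $\alpha_i$ is only constrained to lie in $U_i$, and to connect $p_i$ to $p_{i+1}$ inside $U_i$ it may have to travel arbitrarily far out (think of $U_i$ as a long thin spiral), so $\alpha_0$ can perfectly well cross $\alpha_{100}$. Consequently the surgery cannot be done ``locally,'' and a global surgery on infinitely many arcs requires a stabilization argument you do not supply, together with a genuine check that properness survives. The paper sidesteps this entirely: it observes that it suffices to produce an unbounded \emph{continuum} $K \subset U$ (your $\bigcup_i \alpha_i$ works, being a locally finite union of compacta and hence closed), because by a classical theorem (Wilder, Thm.\ IV.5.1) a boundary point of an open subset of $S^2$ that is accessible by a connected closed set is arcwise accessible; applying this to $\infty \in \partial U$ in the one-point compactification $\hat{P} \simeq S^2$ yields the embedded proper ray. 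Equivalently, you could finish your own argument by noting that your proper ray extends to a continuous map $[0,\infty] \to \hat{P}$ sending $\infty \mapsto \infty$, so its image is a Peano continuum and hence arcwise connected, giving an embedded arc from $\gamma_0(0)$ to $\infty$ inside the image. Either way, some such classical input (or a carefully controlled surgery) is needed to close the gap.
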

\begin{proof}
If $W \subset S^2$ is open, a point $p \in \partial W$ is arcwise accessible from $W$ iff it is accessible by a connected closed set (see \cite{Wilder} for a definition of arcwise accessibility and \cite{Wilder}, Thm IV.5.1 for this statement). So it suffices to show that there is an unbounded continuum $K \subset U$ since then the closure of $K$ in the one point compactification $\hat{P} \simeq S^2$ is just $K \cup \{\infty\}$.

Let $\mu$ be an end of $U$ and fix an exhaustion of the plane by nested compact sets $(D_i)_{i=1}^\infty$. For each $i$ choose a point $x_i \in \mu(D_i)$ and a connected closed set $K_i$ from $x_{i}$ to $x_{i+1}$ contained in $\mu(D_i)$. The union $K = \cup K_i$ is clearly connected and unbounded. It is closed since only finitely many $K_i$ intersect any bounded set in the plane.
\end{proof}

We can characterize the ends of the components complementary to an unbounded continuum.

\begin{lemma}\label{lem:ComplementsOneEnded}
Let $K \subset P$ be an unbounded continuum and let $U$ be an unbounded connected component of $P \setminus K$. Then $U$ has at most one end.
\end{lemma}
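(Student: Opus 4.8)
**Proof proposal for Lemma~\ref{lem:ComplementsOneEnded} (that an unbounded component $U$ of $P \setminus K$ has at most one end).**

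The plan is to pass to the one-point compactification $\hat P \simeq S^2$, writing $\hat K = K \cup \{\infty\}$, which is a continuum in $S^2$ since $K$ is a closed unbounded connected set in the plane. Then $U$ is a connected component of $S^2 \setminus \hat K$, hence is an open connected subset of $S^2$. The key observation is that an end of $U$ (in the sense of Definition~\ref{def:Ends}, applied in $P$) corresponds exactly to a way of "escaping to $\infty$ within $U$": for any bounded exhaustion $(D_i)$ of $P$, an end $\mu$ of $U$ picks out a nested sequence of components $\mu(D_i)$ of $U \setminus D_i$, and each such component is a neighborhood of $\infty$ in $\hat U := U \cup \{\infty\}$ (intersected with $U$). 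So I want to show there is essentially only one component of $U \setminus D$ that "reaches $\infty$," for $D$ large.

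The main step is a separation argument. Suppose for contradiction that $U$ has two distinct ends $\mu, \nu$. Then there is a bounded set $D$ such that $\mu(D) \neq \nu(D)$; these are two distinct components of $U \setminus D$, each unbounded in $P$ (each is a neighborhood of $\infty$ in $\hat P$). Enlarging $D$ to a closed disc, we may assume $D$ is a closed disc in $P$ meeting $K$. Now I use the fact that $\hat K$ is connected together with the structure of complementary components in $S^2$: the boundary in $S^2$ of the component $U$ is contained in $\hat K$, which is connected. The obstruction — and I expect this to be the crux — is to show that the "part of $\hat K$ visible from $\mu(D)$ near $\infty$" and the "part visible from $\nu(D)$ near $\infty$" would disconnect $\hat K$, or else force $\mu(D)$ and $\nu(D)$ to coincide as neighborhoods of $\infty$. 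Concretely: pick proper rays $\gamma_\mu \subset \mu(D)$ and $\gamma_\nu \subset \nu(D)$ accumulating only at $\infty$ (these exist by Lemma~\ref{lem:RayApproachingEnd} applied to the open connected sets $\mu(D)$, $\nu(D)$, each of which has an end). Their closures in $S^2$ are arcs from a point of $\partial D$ to $\infty$. Together with an arc in $D$ joining their endpoints, $\gamma_\mu \cup \gamma_\nu$ closes up to a Jordan curve $J$ in $S^2$ passing through $\infty$. This $J$ separates $S^2$ into two open discs. Since $\hat K$ is connected and disjoint from the open arcs $\gamma_\mu, \gamma_\nu$ (which lie in $U$) except possibly along $D$, one shows $\hat K$ must lie (apart from its intersection with $D$) on one side of $J$; but then near $\infty$, the two local complementary components of $J$ cannot both meet the single component $U$ while being separated by $\hat K$ — contradiction with $\mu(D)$ and $\nu(D)$ being distinct components of $U \setminus D$ that both limit on $\infty$ from opposite sides of $J$.

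The delicate point is controlling how $\hat K$ meets the disc $D$ and the Jordan curve $J$ there, since $K$ can be a wild continuum; the clean way around this is to choose $D$ generously (a large round disc) and to note that the argument only needs the \emph{germ at $\infty$}: two distinct ends of $U$ give two germs of neighborhoods of $\infty$ in $\hat U$ that are disjoint arbitrarily close to $\infty$, and the ray construction converts these germs into disjoint proper rays that, compactified, bound a region near $\infty$ across which $\hat K$ cannot be connected. So the real content is: \emph{a connected open subset of $S^2$ whose complement is connected has a connected (indeed one-point) frontier at any boundary point, so it is "one-ended" at that point.} I would phrase the final contradiction via this local connectivity of $S^2$ at $\infty$ together with the fact (unicoherence of $S^2$, or just the Jordan curve theorem applied to $J$) that $\hat K$ cannot simultaneously meet both sides of $J$ near $\infty$ without being disconnected. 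This yields $\mu(D) = \nu(D)$ for all sufficiently large $D$, hence $\mu = \nu$, so $U$ has at most one end.
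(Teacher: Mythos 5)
Your overall strategy is the same as the paper's: produce proper rays in the two putative ends via Lemma~\ref{lem:RayApproachingEnd}, close them up to a Jordan curve $J$ through $\infty$ (equivalently a properly embedded line in $P$), use connectedness of $K$ to put it on one side, and contradict the distinctness of $\mu(D)$ and $\nu(D)$. But there is a concrete gap in the closing-up step: you join the two rays by ``an arc in $D$,'' where $D$ is a disc that meets $K$. Then $J$ may cross $K$, and your subsequent claim that $\hat{K}$ lies on one side of $J$ ``apart from its intersection with $D$'' is not usable: $K \setminus D$ need not be connected, and it can perfectly well have unbounded pieces on both sides of $J$ (picture $K$ entering $D$ and sending separate branches to infinity on either side of the curve). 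In that situation neither side of $J$ near $\infty$ is forced into $U$, and no contradiction follows. You correctly identify this as ``the delicate point,'' but the remedies you sketch do not close it: the ``germ at $\infty$'' reformulation still requires the connecting arc to miss $K$, and the appeal to local connectivity of the frontier at $\infty$ is not available --- boundaries of simply connected domains in $S^2$ need not be locally connected, and nothing in the hypotheses rules out wildness of $K$ at $\infty$.

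The fix is one line, and it is what the paper does: join the two rays by an arc lying in $U$ itself, which exists because $U$ is open and connected (hence path-connected) and both rays lie in $U$. Then $\gamma = \gamma_\mu \cup \gamma' \cup \gamma_\nu$ is a properly embedded line in $P$ disjoint from $K$; by the Jordan curve theorem (Schoenflies form) $P \setminus \gamma$ has two half-plane components, $K$ lies in one of them, and the other is a connected subset of $P \setminus K$ whose closure contains $\gamma \subset U$, hence is contained in $U$. Far enough from $D$ that half-plane connects a point of $\gamma_\mu$ to a point of $\gamma_\nu$ inside $U \setminus D$, so $\mu(D) = \nu(D)$, a contradiction. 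Your last paragraph's attempt to replace this with a general statement about one-point frontiers should be dropped; once the arc is routed through $U$, none of that machinery is needed.
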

\begin{proof}
Suppose on the contrary that $U$ has two ends. Then there is a compact set $D$ and distinct connected components $U_1, U_2 \subset U \setminus D$ such that each $U_i$ has at least one end. Then by Lemma~\ref{lem:RayApproachingEnd} there are proper embedded rays $\gamma_i \in U_i$. Connect these rays with an arc $\gamma'$ lying in $U$ and let $\gamma = \gamma_1 \cup \gamma' \cup \gamma_2$. Then $\gamma$ is an properly embedded curve and by the Jordan curve theorem there is a homeomorphism of $P$ taking $\gamma$ to the $x$-axis. Now $K$ is contained in, say, the lower half plane so the entire upper half plane is contained in $U$. But there is an arc in the upper half plane connecting $\gamma_1$ and $\gamma_2$ so $U_1$ and $U_2$ cannot be distinct.
\end{proof}

\section{Circular orders}
\begin{definition}
A \emph{circular ordered set} is a set $S$ with at least three elements together with a map 
\[ \langle \cdot , \cdot , \cdot \rangle: \{\text{triples of distinct elements of } S\} \to \pm 1 \]
such that for distinct $x, y, z \in S$
\begin{itemize}
	\item (antisymmetry condition:) $\langle x, y, z \rangle  = (-1)^{sgn(\tau)} \langle \tau(x), \tau(y), \tau(z) \rangle $ for $\tau$ a permutation of $\{x, y, z\}$, and
	\item (cocycle condition:) if $\langle x, y, z \rangle  = +1$ and $\langle x, z, w \rangle  = +1$ then $\langle x, y, w \rangle  = +1$ and $\langle y, z, w \rangle = +1$.
\end{itemize}
\end{definition}

The triple $x,y,z$ is said to be positively ordered if $\langle x,y,z \rangle  = +1$ and negatively ordered otherwise. A tuple $x_1, x_2, ..., x_n$ is said to be positively ordered if $x_i, x_{i+1}, x_{i+2}$ is positively ordered for each $i$ with subscripts taken mod $n+1$.

The \emph{order topology} on a circularly ordered set is the topology with basis the \emph{open intervals}
\[ (x, y) = \{t \in S | \langle x, t, y \rangle  = +1\}.\]
We define the \emph{closed intervals}
\[ [x, y] = (x,y) \cup \{x, y\} \]
and note that $[x, y]$ is the complement of $(y, x)$, which we denote by $(y, x)^c$ in the sequel.

\subsection{The order completion}
\begin{definition}
A circularly ordered set $S$ is \emph{order complete} if for any sequence $I_1 \supset I_2 \supset ...$ of nested closed intervals we have $\bigcap_n I_n \neq \emptyset$.
\end{definition}

Every circularly ordered set has a canonical \emph{order completion} $\overline{S} \supset S$ constructed as follows.

\begin{construction}
Let $S$ be a circularly ordered set. An \emph{admissible sequence} in $S$ is an infinite sequences of closed intervals $(I_i)_{i = i_0}^\infty$ such that $I_i \supseteq I_{i+1}$ for all $i$ and $\bigcap_{i = n}^\infty I_i = \emptyset$. Let $S'$ be the set of equivalence classes of admissible sequences under the following relation.

We define $(I_i) \sim (J_j)$ if for each $n > 0$ there exists $k > 0$ such that $I_k \subset J_n$. This is indeed an equivalence relation. To see that it is symmetric, suppose $(I_i) \sim (J_j)$ and let $n > 0$. Then if $k$ is large enough so that $J_k$ does not contain the endpoints of $I_n$ we must have $J_k \subset I_n$ since $I_{k'} \subset J_k$ for large enough $k'$. Hence $(J_j) \sim (I_i)$. Reflexivity and transitivity are obvious. Note as well that a sequence is equivalent to any of its subsequences.

The order completion is
\[ \overline{S} = S \cup S'. \]
To define the circular order on $\overline{S}$ we can represent each point $x \in S \subset \overline{S}$ by the constant infinite sequence $([x, x])$. If $(I_i), (J_j), (K_k) \in \overline{S}$ are distinct we can choose $n$ large enough such that the intervals $I_n, J_n, K_n$ are disjoint. Then choose $x \in I_n$, $y \in J_n$, and $z \in K_n$ and define $\langle (I_i), (J_j), (K_k) \rangle  = \langle x, y, z \rangle $. It is easy to see that this is well-defined.
\end{construction}

\begin{proposition}
If $S$ is a circularly ordered set then the order completion $\overline{S}$ is in fact order complete. Further, $S$ is dense in $\overline{S}$ with the order topology.
\end{proposition}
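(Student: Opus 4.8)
The plan is to prove the two assertions in the Proposition separately: first that $\overline{S}$ is order complete, then that $S$ is dense in $\overline{S}$.

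\emph{Order completeness.} Suppose $I_1 \supset I_2 \supset \cdots$ is a nested sequence of closed intervals in $\overline{S}$; I want to produce a point of $\overline{S}$ in $\bigcap_n I_n$. Write $I_n = [a_n, b_n]$ with $a_n, b_n \in \overline{S}$. The idea is to replace the endpoints $a_n, b_n$ (which may lie in $S'$) by honest points of $S$ that still sandwich the original intervals, using density of $S$ in the order topology. Concretely, since $S$ is dense (proved below, and the two parts can be proved in either order, or simultaneously), for each $n$ I can choose $a_n', b_n' \in S$ so that $a_n' \in (a_{n+1}, a_n]$ and $b_n' \in [b_n, b_{n+1})$ — i.e. pushing the endpoints slightly \emph{outward} from $I_{n+1}$ but staying inside $I_n$ on each side — arranged so that the closed $S$-intervals $J_n := [a_n', b_n']$ are still nested, $J_{n+1} \subset J_n$, and each $J_n$ still contains $I_{n+1}$. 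Then either $\bigcap_n J_n$ contains a point $x$ of $S$, in which case $x \in \bigcap_n I_n$ (since eventually $I_{n+1} \subset J_n \ni x$ forces... here one checks $x$ lies in every $I_m$), and we are done; or $\bigcap_n J_n = \emptyset$, in which case $(J_n)$ is an admissible sequence and defines a point $\xi \in S' \subset \overline{S}$. It then remains to verify $\xi \in I_m$ for every $m$: given $m$, the tails of $(J_n)$ for $n \geq m$ are eventually contained in $I_m$ (since $J_n \supset I_{n+1}$ is the wrong inclusion, so instead I use that $J_n \subset J_m = [a_m', b_m'] \subset [a_{m-1}, b_{m-1}]$... ) — the bookkeeping here is the one genuinely fiddly point, and the comparison must be set up so that the defining sequence of $\xi$ is ``between'' $a_m$ and $b_m$ for all $m$, using the cocycle condition to chain the inequalities.

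\emph{Density.} Let $\xi \in \overline{S}$ and let $(x, y)$ be a nonempty basic open interval of $\overline{S}$ containing $\xi$; I must find a point of $S$ in $(x, y)$. If $\xi \in S$ there is nothing to do. If $\xi \in S'$, represented by an admissible sequence $(I_i)$ of closed $S$-intervals with $\bigcap I_i = \emptyset$, then by definition of the circular order on $\overline{S}$ there is an $n$ with $I_n$ disjoint from (representatives of) $x$ and $y$ and with $I_n \subset (x,y)$; since $I_n$ is a nonempty closed interval of $S$ it contains at least one element of $S$ — indeed its endpoints lie in $S$ — and any such element lies in $(x, y)$. Here I should double-check the edge case where $I_n$ is a single point, which still lies in $S$. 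This shows every nonempty open interval of $\overline{S}$ meets $S$, which is density.

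\emph{Main obstacle.} The routine-looking parts (antisymmetry/cocycle manipulations) are genuinely routine; the real care is needed in the completeness argument, specifically in choosing the outward-perturbed $S$-endpoints $a_n', b_n'$ so that the resulting sequence $(J_n)$ is simultaneously (i) nested, (ii) admissible when it has empty intersection, and (iii) such that its limit — whether in $S$ or in $S'$ — provably lies in \emph{every} $I_m$. Getting inclusion (iii) right amounts to a careful application of the cocycle condition to transfer ``$J_n$ lies between $a_m$ and $b_m$'' down the tail, and this is where I expect to spend the bulk of the proof; the density statement and its use inside completeness must be organized to avoid circularity, which is why I would actually prove density first as a standalone lemma and then invoke it.
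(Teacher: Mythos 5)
Your proposal is correct and follows essentially the same route as the paper: approximate the endpoints $a_n, b_n \in \overline{S}$ by points of $S$ so as to obtain a nested sequence of closed $S$-intervals $J_n$ with $I_{n+1} \subset J_n \subset I_n$, and observe that either $\bigcap_n J_n \neq \emptyset$ or $(J_n)$ is admissible and defines a point of $S'$ lying in every $I_m$ --- the paper simply takes the approximating points directly from the admissible sequences representing $a_n$ and $b_n$ rather than invoking density, which amounts to the same thing, and it leaves the same final verification to the reader. The one caveat is the degenerate case where an endpoint sequence is eventually constant at a point of $S'$ (so that $[a_n, a_{n+1}) \cap S$ can be empty and your outward perturbation is unavailable); this case should be split off and handled directly, since such a constant endpoint already lies in $\bigcap_n I_n$.
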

\begin{proof}
Suppose that $[(I_i^n), (J_j^n)]_{n=1}^\infty$ is a nested sequence of closed intervals in $\overline{S}$. For each $n$ choose $i_n$ so that $I^{n-1}_{i_{n-1}}$, $I^n_{i_n}$, and $I^{n+1}_{i_{n+1}}$, are disjoint and choose $x_n \in I^n_{i_n}$. Choose $y_n \in J^n_{j_n}$ similarly. Then either $\bigcap_n [x_n, y_n] \neq \emptyset$ or $[x_n, y_n]$ is an admissible sequence. So $\overline{S}$ is order complete.

The second statement is obvious.
\end{proof}

\subsection{Universal circles for circularly ordered sets} \label{sec:UCforCO}
A \emph{gap} in a circularly ordered set $S$ is an ordered pair $x, y \in S$ such that $(x, y)$ is empty.

\begin{proposition} \label{prop:2ndCountable}
Let $S$ be a separable circularly ordered set with countably many gaps. Then $S$ is 2nd countable.
\end{proposition}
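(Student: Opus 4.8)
The plan is to build a countable basis for the order topology from a countable dense set together with a countable set chosen to ``certify'' each gap. Let $S$ be separable, with countable dense subset $C \subset S$, and let $G$ be the (countable) set of gaps. The natural candidate basis is the collection of intervals $(x,y)$ with $x, y \in C$, together with finitely much extra data to handle gaps. So first I would record the easy fact that $C$ being dense means: every nonempty open interval $(a,b)$ contains a point of $C$ — indeed if $(a,b) \neq \emptyset$ it is open and nonempty, hence meets $C$. The work is to show that $\{(x,y) : x,y \in C\}$ together with countably many ``one-sided'' intervals near gaps forms a basis.

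The key step is the following local argument. Fix $p \in S$ and an open interval $(a,b) \ni p$; I want a basic set $B$ with $p \in B \subseteq (a,b)$. Consider the left side: I need $u$ with $a < u \leq p$ (in the circular sense, i.e. $p \in [u, b)$) and $(u, b) \subseteq (a,b)$, which holds as soon as $u \in [a,b)$; it suffices to find such $u$ in $C$, OR to discover that no point of $C$ lies in $(a, p]$, in which case $(a,p]$ must be an interval ``just left of $p$'' and I will show $(a, p)$ is a union of gaps forcing a gap with right endpoint $p$. Precisely: if $C \cap (a, p] = \emptyset$ then by density of $C$ the interval $(a,p)$ is empty, so $(a,p)$ is itself a gap, call it $g_p^- = (a,p)$; similarly handle the right side, producing either a point of $C$ or a gap $g_p^+ = (p, b')$. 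Now, given $p$ and a gap $g$ whose right endpoint (or left endpoint) is $p$, the interval $(a, p]$ resp. $[p, b)$ appearing here is recovered from $g$ and a point of $C$ on the far side — so the extra basic sets I adjoin are: for each gap $g = (c,d) \in G$, the sets $(c, y)$ and $(x, d)$ for $x, y \in C$. That is only countably many sets. Combining the left and right analyses, $p$ has a neighborhood of the form $(\text{something}, \text{something})$, each endpoint either in $C$ or being an endpoint of a gap certified against a point of $C$, and this neighborhood sits inside $(a,b)$. Hence the countable collection
\[
\mathcal{B} = \{(x,y) : x, y \in C\} \cup \{(c,y) : (c,d) \in G,\ y \in C\} \cup \{(x,d) : (c,d) \in G,\ x \in C\}
\]
is a countable basis, proving $S$ is second countable.

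I would carry out the steps in this order: (1) density of $C$ implies every nonempty open interval meets $C$; (2) the left-side lemma: for $p$ and $a$ with $p \in (a, \cdot)$, either there is $u \in C$ with $p \in (a, u']$-type containment giving $(u, b) \subseteq (a,b)$ and $p \in (u,b)$, or $(a,p)$ is a gap and $p$ is its right endpoint; (3) the symmetric right-side lemma; (4) assemble: combine a left neighborhood and a right neighborhood of $p$ inside $(a,b)$ to get a member of $\mathcal{B}$ containing $p$ and contained in $(a,b)$; (5) note $\mathcal{B}$ is countable since $C$ and $G$ are, so $S$ is second countable.

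The main obstacle I anticipate is the bookkeeping in the circular (as opposed to linear) setting: ``left of $p$'' and ``right of $p$'' are only meaningful relative to the chosen containing interval $(a,b)$, and one must be careful that the gap produced in step (2) genuinely has $p$ as an endpoint and is not some larger interval — this is exactly where density of $C$ does the work, since any nonempty subinterval of $(a,p)$ would contain a point of $C$, contradicting $C \cap (a,p] = \emptyset$. A secondary subtlety is verifying that $\mathcal{B}$ really is a basis and not merely a subbasis: one must check that a point lying in two basic sets lies in a third basic set inside their intersection, but since intersections of intervals are intervals (or empty) and the same left/right argument applies to the intersection, this follows from the same lemma. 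No deep ideas are needed beyond these; it is the cocycle/antisymmetry manipulations for ``betweenness'' that require attention.
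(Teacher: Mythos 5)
Your approach is the same as the paper's: take a countable dense set together with the (countably many) gap endpoints, and use intervals with endpoints drawn from this pool. But the basis $\mathcal{B}$ you actually wrote down is too small, and your step (4) fails in one case: when \emph{both} sides of $p$ degenerate, i.e.\ $(a,p)$ and $(p,b)$ are both gaps, the interval you assemble is $(a,b)=\{p\}$ with both endpoints being gap-endpoints, and $\mathcal{B}$ contains no such interval --- you only adjoined intervals with \emph{one} gap-endpoint and one endpoint in $C$. This case is not vacuous. Take $S=\{-1-\tfrac1n\}_{n\ge1}\cup\{-1,0,1\}\cup\{1+\tfrac1n\}_{n\ge1}\cup\{\infty\}$ with the circular order inherited from $\bR\cup\{\infty\}$, and $C=S\setminus\{-1,1\}$. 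Then $C$ is dense (every open interval containing $\pm1$ contains some $\pm(1+\tfrac1n)$), there are countably many gaps, and $\{0\}=(-1,1)$ is open; but no interval $(x,y)$ with $x,y\in C$, nor any $(c,y)$ or $(x,d)$ with only one endpoint a gap-endpoint, equals $\{0\}$, since that forces $x=-1$ and $y=1$, neither of which lies in $C$. So $\mathcal{B}$ is not a basis.

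The fix is one line and recovers exactly the paper's proof: let $C''$ be the set of all gap endpoints (countable) and take $\mathcal{B}$ to be \emph{all} open intervals with both endpoints in $C\cup C''$. Your left- and right-side lemmas then go through verbatim --- each endpoint of the assembled neighborhood is either a point of $C$ found by density or the relevant endpoint of a gap, and in every combination the resulting interval lies in the enlarged countable family. Everything else in your argument (the density observation, the betweenness bookkeeping, and the remark about intersections of circular intervals) is fine.
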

\begin{proof}
Let $S'$ be a countable dense subset of $S$ and let $S''$ to be the set of endpoints of gaps. Let $\cU$ be the collection of open intervals with endpoints in $S' \cup S''$.

Suppose that $U \subset S$ is open and let $x \in U$. Then $x \in (a, b) \subset U$ for some $a, b$. If $(a, x) \neq \emptyset$ then we can find $a' \in S' \cap (a, x)$; otherwise $a \in S''$ and set $a' = a$. Find $b'$ similarly. Then $(a', b') \in \cU$ and $x \in (a', b') \subset U$. This shows that $\cU$ is a (countable) basis for the order topology on $\cE(\cD)$.
\end{proof}

\begin{proposition}
Let $S$ be a separable circularly ordered set with countably many gaps. Then there is an order-preserving injection
\[ f:S \to S^1. \]
If in addition $S$ is order complete then $f$ may be chosen to be a continuous map with closed image.
\end{proposition}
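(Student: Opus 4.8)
\emph{Overview.} The plan is to cut the circular order at a point, reducing the problem to linear orders, to embed a countable ``skeleton'' of $S$ into $\mathbb{Q}$ by Cantor's theorem, to extend to all of $S$ by suprema, and --- when $S$ is order complete --- to choose the skeleton embedding carefully enough that this extension is continuous with closed image. For the skeleton, let $S'$ be a countable dense subset of $S$ (separability) and $S''$ the set of endpoints of the countably many gaps; set $D = S' \cup S''$. As in the proof of Proposition~\ref{prop:2ndCountable}, $D$ is \emph{order dense and gap complete}: if $x \neq y$ and $(x,y) \neq \emptyset$ then $D \cap (x,y) \neq \emptyset$, while if $(x,y) = \emptyset$ then $x, y \in D$. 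Recall also that $S$ is second countable, by Proposition~\ref{prop:2ndCountable}.

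\emph{The order-preserving injection.} If $S$ is finite the statement is trivial, so assume $S$ is infinite and fix a point $p \in D$. The relation $\prec$ on $L := S \setminus \{p\}$ given by ``$x \prec y$ when $\langle p, x, y \rangle = +1$'' is a linear order, and $D \setminus \{p\}$ is a countable linear order, so by Cantor's theorem there is an order-preserving injection $g : D \setminus \{p\} \to \mathbb{Q} \cap (0,1)$. Identify $S^1$ with $[0,1]/(0 \sim 1)$; put $f(p) = 0 \sim 1$ and, for $x \in L$, let $f(x) = \sup\{\, g(d) : d \in D \setminus \{p\},\ d \preceq x \,\}$ (with the empty supremum read as $0$, which arises only for a $\prec$-least element of $L$ and is handled by an evident small adjustment). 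Order density of $D$ shows that $x \prec y$ forces $f(x) < f(y)$, so $f$ is injective and restricts to $g$ on $D \setminus \{p\}$; a short case analysis, organized by how many members of a triple of distinct points of $S$ equal $p$, then shows that $f$ carries the circular order of $S$ to that of $S^1$. This proves the first assertion.

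\emph{The order-complete case.} Suppose now $S$ is order complete. One first checks, using second countability, that every Dedekind cut of $L$ is either realized by a supremum or is a jump: given a cut with neither a maximum below nor a minimum above, take a countable cofinal sequence below and a countable coinitial sequence above, form the associated nested sequence of closed intervals of $S$, apply order completeness, and use order density of $D$ to conclude that the intersection is a single point. The remaining work is to choose $g$ with more care: fix a family of pairwise disjoint open rational subintervals of $(0,1)$, one for each gap of $S$ and arranged in the same circular order as the gaps, and build $g$ by a back-and-forth argument so that it carries the endpoints of each gap to those of the corresponding subinterval and has image dense in the complement of these subintervals. With such a $g$, the formula $f(x) = \sup\{g(d) : d \preceq x\}$ preserves every supremum and infimum that exists --- limits from either side are controlled by the ``filled cut'' property and the jumps by the gap endpoints lying in $D$ --- so $f$ is continuous, while $S^1 \setminus f(S)$ is exactly the union of the reserved subintervals, so $f(S)$ is closed. (Alternatively: an order-complete circular order is compact, so $f(S)$ is automatically closed once $f$ is continuous, and continuity of an order-preserving injection amounts to preservation of existing suprema and infima.)

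\emph{Expected main obstacle.} The delicacy lies entirely in the order-complete case. An arbitrary order-embedding of the skeleton $D$ into $\mathbb{Q}$ may introduce spurious gaps in its image, producing jump discontinuities of $f$; the remedy is to reserve rational subintervals for precisely the genuine gaps of $S$ and fill everything else densely, and it is here that the hypothesis of countably many gaps is genuinely used. Carrying out this back-and-forth, verifying that the resulting $f$ is continuous with closed image, and the bookkeeping of the circular order across the cut point $p$, is where the real work is.
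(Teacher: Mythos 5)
Your route is correct in outline but genuinely different from the paper's. The paper never linearizes: it defines $f$ directly on a countable dense set $S'$ (containing all gap endpoints) by an inductive \emph{midpoint} rule --- each new $s_n$ is sent to the midpoint of the unique complementary arc of $f(S'_{n-1})$ containing it --- and then extends to all of $S$ by one-sided limits, which agree because $S'$ contains the gap endpoints. The midpoint rule is what does the work that your reserved intervals do: it forces every maximal complementary interval of $f(S)$ to have its endpoints in $f(S)$ (no spurious gaps can open up, since a point of $S'$ would eventually be placed at the midpoint of any such interval), and then order completeness handles points of $\overline{f(S)}$ approached from both sides. So the paper gets the ``no spurious gaps'' property for free from the construction, without ever enumerating the gaps, whereas you make the gaps explicit; your version has the virtue of showing exactly where the countably-many-gaps hypothesis enters, at the cost of a heavier construction. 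One point in your sketch that needs repair: you cannot fix the family of reserved rational intervals \emph{in advance} and then run a back-and-forth into it, because the intervals are constrained by the order structure of $S$ --- adjacent gaps (a gap $(w,x)$ followed immediately by a gap $(x,y)$, e.g.\ at an isolated point $x$) force the corresponding closed intervals to share the endpoint $g(x)$, and gaps that accumulate in $S$ must be assigned intervals that accumulate compatibly with the rest of the image. The fix is to build the intervals together with $g$: collapse each gap of the skeleton $D$ to a point, apply Cantor's theorem to the resulting countable dense order, and then re-open the gaps with a jump function such as $x \mapsto x + \sum_{q_i < x} 2^{-i}$; this produces $g$ with image dense in the complement of the jump intervals, after which your sup-extension and filled-cut argument go through as you describe.
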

\begin{proof}
Let
\[ S' = \{s_i\}_{i=1}^\infty\]
be a countable dense subset of $S$ that contains the endpoints of all gaps in $S$. We will start by defining $f$ on $S'$. Send $s_1$ and $s_2$ to two antipodal points in $S^1$. Once we have defined $f$ for $S'_{n-1} = \{s_i\}_{i = 1}^{n-1}$, there are unique $a, b < n$ such that $s_n$ is the only element of $S'_n$ lying in $(s_a, s_b)$. Let $f(s_n)$ be the midpoint of the positively oriented interval from $f(s_a)$ to $f(s_b)$.

Now suppose $x \in S \setminus S'$. Note that $S$ is 2nd countable by Proposition~\ref{prop:2ndCountable}, so we can find a sequence $x_i$ that approaches $x$, in a counterclockwise direction (that is, $x_{i+1} \subset (x_i, x)$ for all $i$). Define
\[ f(x) = \lim_{i \to \infty} f(x_i). \] 
The clockwise limit is identical since $S'$ contains the endpoints of gaps so it is clear that this is well-defined and that $f$ is order-preserving and injective.

Note that if $(a, b)$ is a maximal open interval in $S^1 \setminus f(S)$ then both $a$ and $b$ must be contained in $f(S)$. Indeed, otherwise we could find $i$ and $j$ such that $s_i$ and $s_j$ are arbitrarily close to $a$ and $b$. Then for $k$ large enough, $f(s_k)$ would be the midpoint of the interval $(f(s_i), f(s_j))$ and hence contained in $(a, b)$.

Now suppose that $S$ is order complete. To show that $f$ is continuous it is enough to show that $f(S)$ is closed, since if $(a, b) \subset S^1$ is any open interval we can find $x, y \in S$ such that $(f(x), f(y)) \cap f(S) = (a, b) \cap S$. Then
\[ f^{-1}((a, b)) = f^{-1}((f(x), f(y)) = (x, y). \]

To see that $f(S)$ is closed, let $a \in \overline{f(S)} \subset S^1$. If $a$ is an endpoint of a complementary interval of $f(S)$ then $a$ is in $S$. Otherwise, $a$ is approached by $f(S)$ from both sides. That is, there are sequences $(x_i)$ and $(y_i)$ in $S$ such that $a \in (f(x_i), f(y_i))$ and $(x_{i+1}, y_{i+1}) \subset (x_{i}, y_{i})$ for all $i$. But $S$ is order complete, so there must be an $x \in \bigcap [x_i, y_i]$ and $f(x) = a$.
\end{proof}

\begin{construction}\label{con:UCforCO}
Let $S$ be an uncountable and separable circularly ordered set with countably many gaps. Then the order completion $\overline{S}$ is homeomorphic to an uncountable closed subset of the circle. By the Cantor-Bendixson theorem (see \cite{Kechris}, Theorem 6.4) $\widetilde{S} = T \cup U$ where $T$ is a closed perfect set and $U$ is countable. We collapse the closures of complementary intervals to $T$ to construct the \emph{universal circle} $\widehat{\overline{S}}$, which is homeomorphic to the circle. Any automorphism of a circularly ordered set $S$ induces a homeomorphism on $\widehat{\overline{S}}$.
\end{construction}

\section{Unbounded decompositions}
Throughout this section, $P$ will be a topological space that is homeomorphic to the plane. A collection $\cD$ of disjoint subsets of the plane is \emph{mutually nonseparating} if no one set separates the others. Equivalently, for each $K \in \cD$ there is a single component $U$ of $K^c$ such that each $L \in \cD \setminus \{K\}$ is contained in $U$.

An \emph{$n$-ad} in the plane is a set of $n$ mutually nonseparating unbounded continua. We will show that any $n$-ad has a natural circular order induced by an orientation of the plane.

Suppose $\cD$ is generalized unbounded decomposition of the plane $P$ and $\cE = \cE(\cD)$ is the corresponding set of ends. If $\kappa, \lambda, \mu \in \cE$ then we can find a bounded disc $D$ such that $\kappa(D)$, $\lambda(D)$, and $\mu(D)$ form a triad. Such a disc is said to \emph{distinguish} the ends $\kappa$, $\lambda$, and $\mu$.

The circular order on $n$-ads will induce a circular order on $\cE$.

\subsection{Topological Background}
Let's collect a few definitions and observations that will be useful in the next section. We will use some classical facts from planar point-set topology; we give references in the text and statements in the footnotes.

\begin{lemma}\label{lem:ComplementaryComponents}
Let $K_1, K_2, ..., K_n \subset P$ be an n-ad in the plane. There is exactly one component $C(K_1, ..., K_n)$ of $(\bigcup_i K_i)^c$ that limits on all of the $K_i$. Every other component limits on only one of the $K_i$.
\end{lemma}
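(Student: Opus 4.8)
The plan is to pass to the one-point compactification and induct on $n$, with base case $n=2$. Write $\hat{P}=P\cup\{\infty\}\cong S^2$ and $\hat{K}_i=K_i\cup\{\infty\}$; since $K_i$ is an unbounded continuum, each $\hat{K}_i$ is a subcontinuum of $S^2$ through $\infty$, so $\hat{K}:=\bigcup_i\hat{K}_i$ is a continuum. The complementary components of $\hat{K}$ in $S^2$ are precisely those of $\bigcup_iK_i$ in $P$, and a component $V$ limits on $K_i$ (meaning $\overline{V}\cap K_i\neq\emptyset$) exactly when $\partial V\cap K_i\neq\emptyset$, since $V\cap K_i=\emptyset$. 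Two facts will carry the bookkeeping. By mutual nonseparation, for each $i$ there is a distinguished component $U_i$ of $K_i^c$ containing every $K_j$ with $j\neq i$. And since $\partial V\subseteq\bigcup_jK_j$ for any complementary component $V$: if $V$ fails to limit on $K_c$ then $\partial V\subseteq\bigcup_{j\neq c}K_j$, so $V$ is itself a component of $(\bigcup_{j\neq c}K_j)^c$; if $V$ limits on exactly one $K_i$ then $\partial V\subseteq K_i$, so $V$ is a component of $K_i^c$ with $V\neq U_i$ (because $V$ meets no $K_j$ whereas $U_i\supseteq K_j$ for $j\neq i$, using $n\geq 2$); and no component can limit on zero of the $K_i$, else $\partial V=\emptyset$ and $V$ would be clopen in the connected space $P$.

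I would then prove jointly, by induction on $n\geq 2$: (A) every complementary component limits on all of the $K_i$ or on exactly one; (B) at least one limits on all; (C) at most one limits on all. Together these give the lemma, with $C(K_1,\dots,K_n)$ the component produced by (B) and (C). For (A) (vacuous when $n=2$), suppose a component $V$ limits on $K_a$ and $K_b$ but not on $K_c$. Then $V$ is a component of $(\bigcup_{j\neq c}K_j)^c$ limiting on two members of the $(n-1)$-ad $\{K_j\}_{j\neq c}$, so by (A), (B), (C) for that $(n-1)$-ad we get $V=C(\{K_j\}_{j\neq c})$. But $K_c$, being connected and disjoint from the $K_j$ with $j\neq c$, lies in a single component $V_0$ of $(\bigcup_{j\neq c}K_j)^c$; if $V_0\neq C(\{K_j\}_{j\neq c})$ then by (A) it limits on exactly one $K_j$ and so is a component of $K_j^c$, and since $K_c\subseteq V_0$ while also $K_c\subseteq U_j$ (mutual nonseparation, $c\neq j$) this forces $V_0=U_j$, which is impossible because $U_j$ contains some $K_l$ with $l\neq c,j$ that $V_0$ misses. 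Hence $K_c\subseteq C(\{K_j\}_{j\neq c})=V$, contradicting $V\cap K_c=\emptyset$.

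Assertion (B) would follow from (A) together with connectedness of $P$: if no component limited on all the $K_i$, then (A) sorts the components into classes $\cC_1,\dots,\cC_n$ by the unique $K_i$ each limits on, and $F_i:=K_i\cup\bigcup_{V\in\cC_i}V$ is closed — a limit point of $\bigcup_{V\in\cC_i}V$ not already in it would have to lie on some $K_j$, $j\neq i$, but such a point has a neighborhood inside the open set $U_i$, which is disjoint from every member of $\cC_i$ — so $P=F_1\sqcup\dots\sqcup F_n$ would be a partition into $n\geq 2$ nonempty closed sets, a contradiction. For (C), suppose $V,V'$ both limit on all the $K_i$, and pick $v\in V$, $v'\in V'$. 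Since both limit on $K_2$, the connected sets $V\cup K_2$ and $V'\cup K_2$ lie in $K_1^c$, so $v$ and $v'$ lie in $U_1$; thus $\hat{K}_1$ does not separate $v$ from $v'$ in $S^2$. Since both limit on every member of the $(n-1)$-ad $\{K_j\}_{j\geq 2}$, the components of $(\bigcup_{j\geq 2}K_j)^c$ containing $V$ and $V'$ both equal $C(\{K_j\}_{j\geq 2})$ by (B), (C) for $n-1$ (when $n=2$ this component is just $U_2$), so $\bigcup_{j\geq 2}\hat{K}_j$ does not separate $v$ from $v'$ either. As $\hat{K}_1\cap\bigcup_{j\geq 2}\hat{K}_j=\{\infty\}$ is connected, Janiszewski's theorem gives that $\hat{K}$ does not separate $v$ from $v'$; hence $V=V'$.

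The hard part will be that the continua $K_i$ and the complementary components can be topologically wild — their frontiers need not be locally connected — so the argument must avoid any appeal to "nice" boundaries (no Jordan curve theorem applied to the $K_i$ themselves) and instead rest entirely on connectedness of $P$ together with classical planar separation results such as Janiszewski's theorem; the most delicate step is (A), where one must rule out a component that wraps around so as to limit on two of the $K_i$ without limiting on all of them, and keep the inductive bookkeeping of "which component limits on which $K_i$" exactly right.
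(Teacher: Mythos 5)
Your proof is correct, but it takes a genuinely different route from the paper's. The paper constructs the distinguished component directly as $C=\bigcap_i U_i$: it observes that the complement of this set is the disjoint union of the connected, nonseparating sets $K_i'=P\setminus U_i$ and quotes the classical fact (Wilder, Thm.~II.5.28a) that a finite union of disjoint closed nonseparating sets in the plane is nonseparating, so $C$ is connected; maximality and the classification of the remaining components are then dispatched in a line or two. You instead run a joint induction on $n$ with the trichotomy (A)/(B)/(C), working in the one-point compactification and invoking Janiszewski's theorem directly. These are two packagings of the same topological input --- Wilder's II.5.28a is itself proved by essentially the Janiszewski induction you carry out --- but your version is self-contained modulo Janiszewski alone, and your step (B) (the partition of $P$ into the closed sets $F_i$, contradicting connectedness) gives an honest proof of the claim the paper dismisses with ``it is clear that $\overline{C}$ intersects every $K_i$.'' The cost is length and the bookkeeping of the induction, which you have kept straight: (A) for $n$ uses the full statement for $n-1$, (B) for $n$ uses (A) for $n$, and (C) for $n$ uses (C) for $n-1$, with the degenerate case $n=2$ handled separately as you note. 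One presentational suggestion: your argument for (C) in effect shows that any component limiting on all the $K_i$ is contained in every $U_i$; making this explicit identifies your $C(K_1,\dots,K_n)$ with the paper's $\bigcap_i U_i$ and lets a reader reconcile the two proofs at a glance.
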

\begin{proof}
Set
\[ C = C(K_1, ..., K_n) = \bigcap_i U_i. \]
For each $i$ let $U_i$ be the component of $K_i^c$ that contains the $K_j$ for $j \neq i$. Set $K'_i := P \setminus U_i$ for each $i$. Each $K'_i$ is connected by \cite{Wilder}, Thm. I.9.11. \footnote{If $C$ is a connected subset of a connected space $M$ and $A$ is a component of $M \setminus C$, then $M \setminus A$ is connected.}

Note that $C = (\bigcup_i K'_i)^c$.

The set $C$ is connected since each $K'_i$ is nonseparating and the union of finitely many disjoint nonseparating sets in the plane is nonseparating (\cite{Wilder}, Thm. II.5.28a). It is a maximal connected set since any $x \in P \setminus \bigcup_{i} (K_i)$ that is not in $C$ is separated from $C$ by some $K_i$. Thus $C$ is a component of $(\bigcup_i (K_i))^c$. It is clear that $\overline{C}$ intersects every $K_i$.

Every other component of $(\bigcup_i (K_i))^c$ is a component of $K_i^c$ for some $i$.
\end{proof}

\begin{lemma}
Let $K, L, M \subset P$ be a triad. Then $L \subset C(K, M)$.
\end{lemma}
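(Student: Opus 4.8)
The plan is to unwind the definition of $C(K,M)$ and show directly that $L$ cannot meet any complementary component of $K \cup M$ other than the distinguished one. Recall that a triad $K, L, M$ is a $3$-ad, so the three sets are mutually nonseparating. In particular $K$ does not separate $L$ from $M$, so there is a single component $U_K$ of $K^c$ containing both $L$ and $M$; likewise there is a single component $U_M$ of $M^c$ containing both $K$ and $L$. By Lemma~\ref{lem:ComplementaryComponents} (applied to the $2$-ad $K, M$, or rather read off from its proof), $C(K,M) = U_K \cap U_M$, while every other component of $(K \cup M)^c$ is a component of $K^c$ different from $U_K$, or a component of $M^c$ different from $U_M$.

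First I would observe that $L$ is disjoint from $K \cup M$ (the sets of a triad are disjoint) and connected, so $L$ lies entirely in a single component of $(K \cup M)^c$. Next, since $L \subset U_K$ and $U_K$ is the unique component of $K^c$ containing $L$, the component of $(K\cup M)^c$ containing $L$ cannot be one of the ``other'' components coming from $K^c \setminus U_K$. Symmetrically, since $L \subset U_M$, that component cannot come from $M^c \setminus U_M$ either. The only remaining possibility is $C(K,M) = U_K \cap U_M$, so $L \subset C(K,M)$.

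The only point requiring a little care is the identification $C(K,M) = U_K \cap U_M$ and the claim that every other component of $(K\cup M)^c$ is a component of $K^c$ or of $M^c$; but this is exactly the content of Lemma~\ref{lem:ComplementaryComponents} and its proof, so I would simply cite it rather than re-derive it. I do not expect any genuine obstacle here — the statement is essentially a bookkeeping consequence of ``mutually nonseparating'' together with the structure of complementary components already established. If one wanted to avoid invoking the internal workings of the previous proof, one could instead argue directly: a point $x \in L$ lies in $U_K$ (as $L$ is connected, disjoint from $K$, and meets no other component of $K^c$ since $L \subset U_K$) and in $U_M$ for the same reason, and $U_K \cap U_M$ is connected and disjoint from $K \cup M$, hence contained in a single component of $(K\cup M)^c$, which is therefore the component $C(K,M)$ that limits on all three of $K$, $M$ — and, being the component containing $L$, limits on $L$ as well.
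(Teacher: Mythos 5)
Your argument is correct, and it takes a slightly different route from the paper's. The paper proves the lemma by introducing the three-set complementary component $C(K,L,M)$: since that set limits on both $K$ and $M$ it must lie inside $C(K,M)$ (the unique component of $(K\cup M)^c$ limiting on both, by Lemma~\ref{lem:ComplementaryComponents}), and since it also limits on $L$ one concludes $L\subset C(K,M)$ (using that $L$ is connected, disjoint from $K\cup M$, and $\partial C(K,M)\subset K\cup M$). You instead bypass $C(K,L,M)$ entirely and read the conclusion straight off the definition of ``mutually nonseparating'' together with the identity $C(K,M)=U_K\cap U_M$ from the proof of Lemma~\ref{lem:ComplementaryComponents}: the triad condition puts $L$ in the distinguished component $U_K$ of $K^c$ and in $U_M$, hence in their intersection. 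This is, if anything, more direct --- the only cost is that it quotes the internal description of $C(K,M)$ rather than only the statement of the previous lemma, whereas the paper's version uses only the ``limits on'' characterization; your enumeration of the remaining complementary components is not even needed once you observe $L\subset U_K\cap U_M$. Both arguments are sound; there is no gap in yours.
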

\begin{proof}
Note that $C(K, L, M)$ limits on $K$ and $M$, so it must be contained in $C(K, M)$. But $C(K, L, M)$ also limits on $L$ so $L$ must also be in $C(K, M)$.
\end{proof}

Let $K, M \subset P$ be closed sets in the plane. An \emph{arc from $K$ to $M$} is an embedded oriented arc with initial point in $K$ and terminal point in $M$ whose interior $\mathring{\gamma}$ is disjoint from $K \cup M$. Note that $C(K, M)$ is homeomorphic to the disc (by the uniformization theorem, since the compact region bounded by any simple closed curve cannot intersect $K$ or $M$) and $\mathring{\gamma} \subset C(K, M)$. By the Jordan curve theorem, $\mathring{\gamma}$ separates $C(K,M)$ into two discs. We define $C^+(\gamma; K, M)$ to be the component of $C(K, M) \setminus \gamma$ on the positive side of $\gamma$ and $C^-(\gamma; K, M)$ to be the component on the negative side.

If the sets $K$ and $M$ are implicit then we use the abbreviation $C^\pm(\gamma) = C^\pm(\gamma; K, M)$.

Note that $C^\pm(\gamma)$ are discs for the same reason that $C(K, M)$ is.

\begin{lemma} \label{lem:SeparatingArc}
Let $K, M \subset P$ be disjoint unbounded continua in the plane and let $\gamma$ be an arc from $K$ to $M$. Then $C^\pm(\gamma; K, M)$ are both unbounded.
\end{lemma}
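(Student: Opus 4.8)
Here is how I would approach the proof. It is natural to argue by contradiction: suppose one of the two components, say $C^+ := C^+(\gamma; K, M)$, is bounded, and derive a contradiction from the unboundedness of $K$ and $M$. The first task is to pin down the frontier of $C^+$. Writing $C := C(K,M)$, the interior $\mathring\gamma$ is a closed, properly embedded copy of $\bR$ in the open disc $C$ — closed in $C$ because a sequence along $\gamma$ that stays in $C$ cannot converge to $\gamma(0)\in K$ or $\gamma(1)\in M$. A properly embedded line separates a plane into two pieces, each of which has the whole line as its frontier; hence $C^+$ and $C^- := C^-(\gamma)$ each have frontier exactly $\mathring\gamma$ relative to $C$, so that $\partial_P C^+ = \mathring\gamma \cup (\overline{C^+} \cap \partial_P C)$. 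Since $\partial_P C \subseteq K\cup M$ this gives
\[ \partial_P C^+ = \gamma \cup J_K \cup J_M, \qquad J_K := \overline{C^+}\cap K,\quad J_M := \overline{C^+}\cap M, \]
with $\gamma(0)\in J_K$ and $\gamma(1)\in J_M$. Because $C^+$ is assumed bounded, $\overline{C^+}$ is compact, so $J_K$ and $J_M$ are disjoint, nonempty compacta.

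The key point for the contradiction is that $\mathring\gamma$ is a \emph{one-sided} free arc of the open disc $C^+$: every $x\in\mathring\gamma$ has a neighbourhood $N\subseteq C$ with $N\cap\overline{C^+} = (N\cap\mathring\gamma)\cup(N\cap C^+)$, where $N\cap C^+$ is just one of the two sides of $\mathring\gamma$ in $N$. I would then argue that a bounded open disc cannot have a one-sided free arc whose removal disconnects its frontier. Concretely, applying Carath\'eodory's prime end theory to the bounded simply connected domain $C^+$ (or, equivalently, exhausting $C^+$ by closed Jordan domains): the prime ends of $C^+$ with principal point on $\mathring\gamma$ form an open arc of the prime end circle, each with single-point impression, since $\partial_P C^+$ is locally just $\mathring\gamma$ there; the complementary \emph{closed} arc of prime ends then has, for its union of impressions, a connected set $Z$ which contains $\partial_P C^+\setminus\mathring\gamma = J_K\cup J_M$ and meets $\mathring\gamma$ only in $\{\gamma(0),\gamma(1)\}$. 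Forcing $Z$ to be connected forces $J_K\cup J_M$ to be connected, contradicting that $J_K$ and $J_M$ are disjoint and nonempty. Running this argument once with $C^+$ and once with $C^-$ gives the lemma.

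The step I expect to be the main obstacle is the one just described: converting the intuitively obvious statement ``a bounded open disc cannot have a one-sided free arc whose complement in the frontier is disconnected'' into a rigorous argument. The decomposition elements $K$ and $M$ are arbitrary unbounded continua, with possibly wild frontiers, and they need not be arcwise connected, so one cannot simply build a Jordan curve out of $\gamma$ together with arcs of $K$ and $M$; one genuinely needs prime ends, or the classical plane topology of complementary domains and exhaustions by Jordan domains — precisely the machinery being assembled in the ``Topological Background'' subsection. Everything else (identifying $\partial_P C^+$ and reducing to a statement about a single open disc) is routine.
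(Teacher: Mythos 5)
Your argument is correct, but it takes a genuinely different and considerably heavier route than the paper. Both proofs begin the same way: assume $C^+(\gamma)$ is bounded and identify its frontier as $\gamma \cup K' \cup M'$ with $K' = \overline{C^+(\gamma)}\cap K$ and $M' = \overline{C^+(\gamma)}\cap M$ disjoint, nonempty and compact. From there the paper does \emph{not} need prime ends: it observes that a point $p \in C^+(\gamma)$ is separated from $\infty$ in $S^2$ by $\partial C^+(\gamma) = K'\cup\gamma\cup M'$, and then contradicts this by two applications of the Janiszewski-type theorem (\cite{Wilder}, Thm.\ II.5.29): since $K'\cap\gamma$ and $(K'\cup\gamma)\cap M'$ are single points (hence connected) and none of $K'$, $\gamma$, $M'$ separates $p$ from $\infty$ individually, their union cannot either. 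This is exactly the classical plane-topology substitute for the Jordan-curve construction you rightly note is unavailable when $K$ and $M$ are wild continua, and it is the tool the surrounding ``Topological Background'' subsection is set up to quote. Your Carath\'eodory route works: the one fact you would need to nail down is that the union of impressions over a \emph{closed} arc of the prime-end circle is a continuum (it is a nested intersection of continua $\overline{\phi(V_n)}$ for connected neighbourhoods $V_n$ of the corresponding arc of $\partial\mathbb{D}$), together with the standard extension of the Riemann map across the one-sided free arc $\mathring{\gamma}$; granting these, your conclusion that $Z = K'\cup M'$ is connected yields the same contradiction. What your approach buys is a structural picture of $\partial C^+(\gamma)$ that could be reused elsewhere; what it costs is conformal-mapping machinery for a statement the paper dispatches with one point-set separation theorem, so for consistency with the rest of the paper the Janiszewski argument is preferable.
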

\begin{proof}
Suppose that one of these, say $C^+(\gamma)$, is bounded. Then $\partial C^+(\gamma) = K' \cup \gamma \cup M'$ where $K'$ and $M'$ are bounded subsets of $K$ and $M$. Choose a point $p \in C$. Now neither $K'$ nor $\gamma$ separate $p$ from $\infty$ in the one point compactification $\hat{P} \simeq S^2$ and $K' \cap \gamma$ is connected. So $K' \cup \gamma$ does not separate $p$ from $\infty$ (\cite{Wilder}, Thm II.5.29). \footnote{If $x$ and $y$ are points in $S^2$ which are not separated by either of the closed sets $A$ and $B$, and $A \cap B$ is connected, then $x$ and $y$ are not separated by $A \cup B$.} Similarly, neither $K' \cup \gamma$ nor $L'$ separate $p$ from $\infty$ and $(K' \cup \gamma) \cap L'$ is connected so $(K' \cup \gamma) \cup L'$ does not separate $p$ from $\infty$ contradicting our assumption.
\end{proof} 

\begin{lemma}\label{lem:ArcSides}
Let $K, M \subset P$ be disjoint unbounded continua in the plane and let $\gamma, \gamma'$ be disjoint arcs from $K$ to $M$. Then we can relabel $\gamma$ and $\gamma'$ as the ``outer arc'' $\gamma_+$ and ``inner arc'' $\gamma_-$ in such a way that
\begin{enumerate}
	\item $\gamma_+ \subset C^+(\gamma_-)$ and $\gamma_- \subset C^-(\gamma_+)$;
	\item the components of $C(K, M) \setminus (\gamma_+ \cup \gamma_-)$ are 
			\[ C^+(\gamma_+), \]
			\[ C^-(\gamma_+) \cap C^+(\gamma_-), \]
			and
			\[ C^-(\gamma_-); \]
	\item $C^+(\gamma_+) \subset C^+(\gamma_-)$ and $C^-(\gamma_-) \subset C^-(\gamma_+)$; and
	\item $C^-(\gamma_+) \cap C^+(\gamma_-)$ has no ends.
\end{enumerate}
\end{lemma}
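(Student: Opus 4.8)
The plan is to reduce everything to one fact about which side of one arc the other lies on. Since $\mathring{\gamma}'$ is connected and disjoint from $K\cup M\cup\gamma$, it lies in a single component of $C(K,M)\setminus\gamma$; relabel $\gamma,\gamma'$ as $\gamma_-,\gamma_+$ so that $\mathring{\gamma}_+\subset C^+(\gamma_-)$, which is already the first inclusion in (1). For the second, $\mathring{\gamma}_-$ is connected and disjoint from $K\cup M\cup\gamma_+$, hence lies in $C^+(\gamma_+)$ or in $C^-(\gamma_+)$, and I would exclude the former using the orientation of $P$: an arc oriented from $K$ to $M$ has a well-defined positive side, and for two disjoint such arcs the region trapped between them lies on the positive side of one and the negative side of the other (equivalently, one checks that the four feet $p,q,p',q'$ occur around the boundary circle of the disc $C(K,M)$ in the cyclic order $p,p',q',q$, from which (1)--(3) can all be read off). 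Hence $\mathring{\gamma}_+\subset C^+(\gamma_-)$ forces $\mathring{\gamma}_-\subset C^-(\gamma_+)$.

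Given (1), items (2) and (3) are bookkeeping. By (1), $\mathring{\gamma}_+$ is a properly embedded arc in the open disc $C^+(\gamma_-)$, so it splits $C^+(\gamma_-)$ into two open discs while $C^-(\gamma_-)$ is untouched; thus $C(K,M)\setminus(\gamma_-\cup\gamma_+)$ has exactly three components, namely $C^-(\gamma_-)$, $C^+(\gamma_-)\cap C^+(\gamma_+)$, and $C^+(\gamma_-)\cap C^-(\gamma_+)$. Since a component of an open planar set is clopen in it, and $\mathring{\gamma}_-\subset C^-(\gamma_+)$, the component $C^-(\gamma_-)$, whose frontier in $C(K,M)$ is all of $\gamma_-$, cannot lie inside $C^+(\gamma_+)$; hence $C^-(\gamma_-)\subset C^-(\gamma_+)$, and symmetrically $C^+(\gamma_+)\subset C^+(\gamma_-)$. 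This is (3), and it rewrites $C^+(\gamma_-)\cap C^+(\gamma_+)$ as $C^+(\gamma_+)$, giving exactly the list in (2).

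The substantive claim is (4). Write $R:=C^-(\gamma_+)\cap C^+(\gamma_-)$ for the middle component. A bounded open set contains no unbounded continuum and so has no ends, so it suffices to show $R$ is bounded; since the frontier of $R$ in $P$ lies in $\gamma_-\cup\gamma_+\cup K\cup M$ with $\gamma_\pm$ compact, this amounts to showing $\infty\notin\overline R$ in the one-point compactification. The outer regions $C^-(\gamma_-)$ and $C^+(\gamma_+)$ are unbounded by Lemma~\ref{lem:SeparatingArc}, so $\infty$ does lie on the parts of $\partial C(K,M)$ bounding them; the task is to show it does not also lie on the part bounding $R$. The approach I would take: working inside the closed disc $\overline{C^+(\gamma_-)}$ (whose boundary circle is made of $\gamma_-$, pieces of $K$ and $M$, and prime ends over $\infty$), show that the chord $\gamma_+$ separates $\gamma_-$ from every prime end of $C^+(\gamma_-)$ over $\infty$, so that all of the approach to infinity on that side is absorbed by $C^+(\gamma_+)$ and none by $R$; likewise on the other side with $\gamma_-$ and $C^-(\gamma_+)$. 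Equivalently, by Lemma~\ref{lem:RayApproachingEnd} an end of $R$ would yield a proper ray $\delta\subset R$, hence an unbounded continuum $N$ inside the ``band'' $R$, and I would derive a contradiction from the coexistence of this unbounded $N$ with the two compact arcs $\gamma_\pm$, using that $C^+(\gamma_-)$ and $C^-(\gamma_+)$, being unbounded complementary domains of the unbounded continua $K\cup\gamma_-\cup M$ and $K\cup\gamma_+\cup M$ respectively, carry at most one end each (Lemma~\ref{lem:ComplementsOneEnded}). This last step is the main obstacle: because $K$ and $M$ may meet the sphere at infinity in complicated ways, one must argue carefully that whatever single end each of these half-discs has is the one bordered by the outer region and not by $R$.
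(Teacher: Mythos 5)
Your treatment of (1)--(3) is broadly workable, though the one step that carries real content --- ruling out $\mathring{\gamma}_-\subset C^+(\gamma_+)$ once $\mathring{\gamma}_+\subset C^+(\gamma_-)$ --- is asserted via a cyclic-order claim about ``the boundary circle of the disc $C(K,M)$''. That claim is essentially a restatement of (1), and $\partial C(K,M)$ is not a circle in $P$, so making it precise requires prime ends or an explicit separation argument. The paper avoids this by a concrete construction: assuming both arcs lie on each other's positive side, it builds a simple closed curve from an auxiliary arc $\lambda$, subarcs of $\gamma_\pm$, and an arc $\lambda'$ running near $K$, and observes that this curve would separate $M\cap\gamma_+$ from $M\cap\gamma_-$, contradicting the connectedness of $M$. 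That is a fixable soft spot in your write-up rather than a fatal error.

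The genuine gap is in (4). Your primary strategy --- ``it suffices to show $R$ is bounded'' --- cannot succeed: the paper points out immediately after the lemma (Figure~\ref{fig:UnboundedMiddle}, built from the open no-ended set of Example~\ref{ex:NoEnds}) that the middle region $C^-(\gamma_+)\cap C^+(\gamma_-)$ can be unbounded. For open sets, having no ends is strictly weaker than boundedness, and that weaker statement is exactly what must be proved. Your fallback route (a proper ray $\delta\subset R$ via Lemma~\ref{lem:RayApproachingEnd}, then a contradiction) is the correct starting point, but you explicitly leave the contradiction open, and Lemma~\ref{lem:ComplementsOneEnded} will not by itself decide which end each half-disc ``uses up''. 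The missing mechanism is: choose an arc $\kappa$ from $\gamma_+$ to $\gamma_-$ meeting $\delta$ only at its initial point; one of $K$, $M$ lies on the same side of $\kappa$ as $\delta$ (say $K$), so append an arc $\delta'$ from $\delta(0)$ to $M$ whose interior lies on the other side. Because $\delta$ is proper, $M\cup\delta\cup\delta'$ is a closed, connected, unbounded set disjoint from $K$ that separates $\gamma_+$ from $\gamma_-$, and hence separates $\gamma_+\cap K$ from $\gamma_-\cap K$, contradicting the connectedness of $K$. Without an argument of this kind, part (4) is not established.
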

\begin{proof}
Suppose $\gamma \subset C^+(\gamma')$. Then label $\gamma_+ := \gamma$ and $\gamma_- := \gamma'$. We need to show that
\[ \gamma_- \subset C^-(\gamma_+). \]
If on the contrary $\gamma_- \subset C^+(\gamma_+)$ then there is an oriented arc $\lambda$ from $\gamma_-$ to $\gamma_+$ whose interior $\mathring{\lambda}$ is on the positive side of both $\gamma_+$ and $\gamma_-$. Let $\lambda'$ be an arc from $\gamma_+ \cap K$ to $\gamma_- \cap K$ that misses $L$. Then the simple closed curve $c$ consisting of $\lambda$, the segment of $\gamma_+$ from $\lambda$ to $K$ with orientation reversed, $\lambda'$, and the segment of $\gamma_-$ from $K$ to $\lambda$ separates $L \cap \gamma_+$ from $L \cap \gamma_-$, which is impossible.

If $\gamma' \subset C^+(\gamma)$ then a similar argument works after setting $\gamma_+ := \gamma'$ and $\gamma_- := \gamma$.

This completes the proof of (1). Statements (2) and (3) follow.

Suppose that $C^-(\gamma_+) \cap C^+(\gamma_-)$ has an end. Then by Lemma~\ref{lem:RayApproachingEnd} there is a properly embedded ray $\lambda:[0, \infty) \to C^-(\gamma_+) \cap C^+(\gamma_-)$. Let $\kappa$ be an arc from $\gamma_+$ to $\gamma_-$ that intersects $\lambda$ only at $\lambda(0)$. Either $K$ or $L$ is on the same side of $\kappa$ as $\lambda$, so suppose without loss of generality that $K$ is. Then we can add an arc $\lambda'$ from $\lambda(0)$ to $L$ whose interior is on the opposite side of $\kappa$. Observe that $L \cup \lambda \cup \lambda'$ separates $\gamma_+$ from $\gamma_-$ and hence separates $\gamma_+ \cap K$ from $\gamma_- \cap K$, which is impossible.
\end{proof}

While $C^-(\gamma_+) \cap C^+(\gamma_-)$ in the preceeding lemma has no ends, it is possible that it is unbounded. For example, see Figure~\ref{fig:UnboundedMiddle} where $K$ is upper rectangle with the no-ended open set of Example~\ref{ex:NoEnds} cut out.

\begin{figure}[h]
	\centering
	\labellist
	\small\hair 2pt
	\pinlabel $K$ at 80 131
	\pinlabel $L$ at 80 24
	\pinlabel $\gamma_+$ [r] at 37 75
	\pinlabel $\gamma_-$ [l] at 127 75
	\endlabellist
		\includegraphics{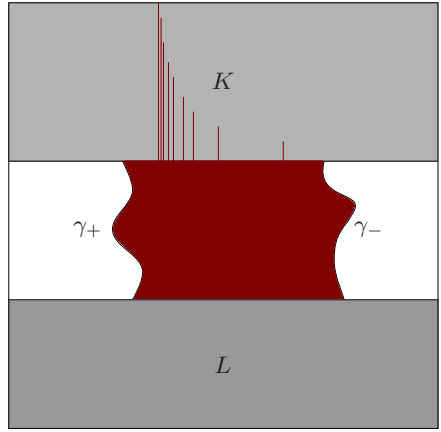}
	\caption{An unbounded middle region.}\label{fig:UnboundedMiddle}
\end{figure}

\subsection{Circular orders on ends} \label{sec:CO}
\begin{definition}
Let $P$ be the plane with a choice of orientation and let $K, L, M$ be a triad in $P$. Choose an arc $\gamma$ from $K$ to $M$. Then we define
\[ \langle K, L, M \rangle  = +1 \]
if $L \subset C^+(\gamma)$
and 
\[ \langle K, L, M \rangle  = -1 \]
if $L \subset C^-(\gamma)$
\end{definition}

\begin{proposition}
The function $\langle \cdot, \cdot, \cdot \rangle $ defines a circular order an n-ad in the plane.
\end{proposition}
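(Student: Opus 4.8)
The plan is to verify the two axioms of a circular order — antisymmetry and the cocycle condition — for the function $\langle\cdot,\cdot,\cdot\rangle$ just defined on a triad $K,L,M$ in the oriented plane $P$. Before doing either, I would record a \emph{well-definedness} lemma: the sign $\langle K,L,M\rangle$ does not depend on the choice of arc $\gamma$ from $K$ to $M$. This is the technical heart of the whole construction. Given two arcs $\gamma,\gamma'$ from $K$ to $M$, I may first perturb them to be disjoint (or handle intersections by a small isotopy), and then apply Lemma~\ref{lem:ArcSides}: one of them is the outer arc $\gamma_+$ and the other the inner arc $\gamma_-$, the complement $C(K,M)\setminus(\gamma_+\cup\gamma_-)$ has exactly three pieces $C^+(\gamma_+)$, $C^-(\gamma_+)\cap C^+(\gamma_-)$, $C^-(\gamma_-)$, and the middle piece has no ends. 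Since $L$ is an unbounded continuum contained in $C(K,M)$ (by the lemma that a triad satisfies $L\subset C(K,M)$), and $L$ is connected and disjoint from both arcs, $L$ lies in exactly one of the three complementary regions; but it cannot lie in the middle region, which has no ends while $L$ does. Hence $L\subset C^+(\gamma_+)\subset C^+(\gamma_-)$ or $L\subset C^-(\gamma_-)\subset C^-(\gamma_+)$, and in either case $\gamma_+$ and $\gamma_-$ assign the same sign. (If the two arcs are not disjoint one reduces to this case by subdividing, or by first isotoping rel endpoints.)

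Next I would check \textbf{antisymmetry}. It suffices to check two generating transpositions. Swapping $K$ and $M$: an arc $\gamma$ from $K$ to $M$, run in reverse, is an arc $\bar\gamma$ from $M$ to $K$, and reversing an arc's orientation interchanges its positive and negative sides, so $C^+(\bar\gamma;M,K)=C^-(\gamma;K,M)$; thus $\langle M,L,K\rangle=-\langle K,L,M\rangle$. Swapping $L$ and $M$: here I compare $\langle K,L,M\rangle$, computed with an arc from $K$ to $M$, with $\langle K,M,L\rangle$, computed with an arc from $K$ to $L$. Take an arc $\gamma$ from $K$ to $M$ with $L$ on, say, its positive side, so $\langle K,L,M\rangle=+1$; I want $\langle K,M,L\rangle=-1$. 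Extend or modify $\gamma$ to an arc $\gamma'$ from $K$ to $L$ (running alongside $\gamma$ up to near $M$ and then cutting across $C(K,M)$ into $L$), arranged so that $M$ ends up on the negative side of $\gamma'$; a small picture in the disc $C(K,M)$ shows this can be done, and the orientation bookkeeping gives the claimed sign flip. Combined with well-definedness, antisymmetry follows for all permutations.

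Finally, the \textbf{cocycle condition}: assuming $\langle K,L,M\rangle=+1$ and $\langle K,M,N\rangle=+1$, I must show $\langle K,L,N\rangle=+1$ and $\langle L,M,N\rangle=+1$. Fix an arc $\gamma_{KM}$ from $K$ to $M$ with $L\subset C^+(\gamma_{KM})$, and an arc $\gamma_{KN}$ from $K$ to $N$ with $M\subset C^-(\gamma_{KN})$ (note $\langle K,M,N\rangle=+1$ means $\langle K,N,M\rangle=-1$, so $M$ is on the negative side of an arc from $K$ to $N$, by the definition applied to the triad $K,N,M$ — I would set up the signs carefully at this point). Using well-definedness I may choose these arcs to be disjoint except at $K$ and nested as in Lemma~\ref{lem:ArcSides}, so that the three continua $L,M,N$ are linearly laid out across the disc $C(K,\cdot)$ in the order dictated by the two inequalities. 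Then the positions of $L$ and $N$ relative to a single arc from $K$ to $N$ (respectively from $L$ to $N$) are forced, and one reads off $\langle K,L,N\rangle=+1$ and $\langle L,M,N\rangle=+1$ directly. The main obstacle — and where I would spend the most care — is the well-definedness lemma and the disjointness/nesting reductions it feeds into: making rigorous the claim that finitely many arcs between the continua $K,L,M,N$ can be simultaneously disarranged into a standard nested configuration, so that all the sign computations become a matter of inspecting a model picture in a disc. The no-ends property of the middle region in Lemma~\ref{lem:ArcSides} is exactly the tool that prevents the unbounded continua from sneaking through, and it is what makes the argument go through despite the decomposition elements being arbitrarily wild closed sets.
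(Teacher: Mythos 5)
Your proposal follows essentially the same route as the paper: well-definedness via Lemma~\ref{lem:ArcSides} and the no-ends property of the middle region, antisymmetry by orientation reversal together with a sub-arc argument, and the cocycle condition by arranging arcs for a $4$-ad and reading off sides. The one genuine rough spot is your reduction to disjoint arcs: ``perturbing'' or ``isotoping rel endpoints'' is circular here, since invariance of the sign under moving the arc is exactly what well-definedness asserts; the paper instead interposes a third arc from $K$ to $M$ disjoint from both given ones and compares each to it, which sidesteps the issue.
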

\begin{proof}
We will start by showing show that for a triad $K, L, M$ in $P$, $\langle K, L, M \rangle $ does not depend on the choice of arc $\gamma$. Note that if $\gamma_1$ and $\gamma_2$ are arcs from $K$ to $M$ then we can find another arc from $K$ to $M$ disjoint from both of them. So we may assume without loss of generality that $\gamma_1$ and $\gamma_2$ are disjoint. Relabel the arcs according to Lemma~\ref{lem:ArcSides} and suppose that $L \subset C^+(\gamma_+)$. By part (4) of the lemma $L$ is contained in either $C^+(\gamma_+)$ or $C^-(\gamma_-)$, so $\langle K, L, M \rangle $ is well-defined by part (3) of the lemma.

Next we'll show that
\[ \langle K, L, M \rangle  = (-1)^{sgn(\tau)} \langle \tau(K), \tau(L), \tau(M) \rangle , \]
for a permutation $\tau$ of $(K, L, M)$.

It is immediate that $\langle K, L, M \rangle  = -\langle M, L, K \rangle $, so we just need to show that $\langle K, L, M \rangle  = -\langle K, M, L \rangle $. Indeed, assume that $\langle K, L, M \rangle  = +1$, i.e. that $L \subset C^+(\gamma)$ for some arc $\gamma$ from $K$ to $M$. We can find an arc $\gamma'$ from $K$ to $M$ that intersects $L$ and lies on the positive side of $\gamma$. Let $\gamma''$ be the sub-arc of $\gamma'$ that runs from $K$ to $L$. Then it is immediate that $\gamma$ is on the negative side of $\gamma''$ and hence so is $M$. Therefore, $\langle K, M, L \rangle  = -1$ as desired.

It remains to show that the circular order is compatible on quadruples, i.e. to verify the cocycle condition. Let $K, L, M, N$ be a $4$-ad in $P$. Suppose that $\langle K, L, M \rangle  = +1$ and $\langle K, N, M \rangle  = -1$. Choose an arc $\gamma$ from $K$ to $M$ that avoids $L$ and $N$. Then $L \in C^+(\gamma)$ while $N \in C^-(\gamma)$. Let $\gamma'$ be an arc from $L$ to $N$, which we can choose to intersect $\gamma$ only once and transversely. Then the initial segment of $\gamma$ is on the negative side of $\gamma'$ hence so is $K$. Therefore we have $\langle L, K, N \rangle  = -1$ and $\langle K, L, N \rangle  = +1$ as desired and similarly $\langle L, M, N \rangle = +1$.
\end{proof}

A collection of unbounded subsets in the plane is \emph{eventually disjoint} if the intersection of any two is bounded. Recall that a generalized unbounded decomposition of $P$ is a collection of eventually disjoint unbounded continua that covers $P$.

\begin{definition}
Let $\cD$ be a collection of eventually disjoint unbounded continua in the plane $P$ with a corresponding set of ends $\cE$. Suppose $\kappa, \lambda, \mu$ is a triple of distinct ends in $\cE$. Choose a disc $D$ that distinguishes these ends and define
\[ \langle \kappa, \lambda, \mu \rangle  = \langle \kappa(D), \lambda(D), \mu(D) \rangle . \]
\end{definition}

\begin{proposition}
Let $\cD$ be a collection of eventually disjoint unbounded continua in the plane $P$ with a corresponding set of ends $\cE$. The function $\langle \cdot, \cdot, \cdot \rangle $ defines a circular order on $\cE$.
\end{proposition}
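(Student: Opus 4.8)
The plan is to verify the two axioms of a circular order---antisymmetry and the cocycle condition---for the function $\langle\cdot,\cdot,\cdot\rangle$ on triples of distinct ends, reducing each to the corresponding property for triads of unbounded continua, which has already been established in the preceding proposition. The one genuinely new ingredient is that the definition $\langle\kappa,\lambda,\mu\rangle = \langle\kappa(D),\lambda(D),\mu(D)\rangle$ must be shown independent of the choice of distinguishing disc $D$; I expect this to be the main point requiring care.

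First I would establish well-definedness. Suppose $D$ and $D'$ both distinguish $\kappa,\lambda,\mu$. Since $\cD$ is eventually disjoint, the pairwise intersections $\kappa(D)\cap\lambda(D)$, etc., are bounded, so one can enlarge both $D$ and $D'$ to a single bounded disc $D''$ containing $D\cup D'$ and large enough that it still distinguishes the three ends (this uses Lemma~\ref{lem:ComplementaryComponents} and the fact that $\kappa(D'')\subseteq\kappa(D)$ by property (2) of ends, so $\kappa(D'')$, $\lambda(D'')$, $\mu(D'')$ still form a triad). Thus it suffices to compare $D$ with a larger disc $D''\supset D$. Here $\kappa(D'')\subseteq\kappa(D)$ and similarly for $\lambda,\mu$; an arc $\gamma$ from $\kappa(D'')$ to $\mu(D'')$ realizing $\langle\kappa(D''),\lambda(D''),\mu(D'')\rangle$ can be extended, within $\kappa(D)$ and $\mu(D)$ respectively, to an arc from $\kappa(D)$ to $\mu(D)$; since $\lambda(D'')\subseteq\lambda(D)$ and $\lambda(D'')$ lies on a definite side of the extended arc, the connected set $\lambda(D)$ lies on the same side (it cannot straddle, as it is connected and disjoint from the arc once the arc is chosen to avoid $\lambda(D)$, which is possible since $\cD$ is eventually disjoint). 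Hence $\langle\kappa(D),\lambda(D),\mu(D)\rangle = \langle\kappa(D''),\lambda(D''),\mu(D'')\rangle$, giving well-definedness.

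Once this is in hand, antisymmetry is immediate: for a permutation $\tau$, any disc distinguishing $\kappa,\lambda,\mu$ also distinguishes $\tau(\kappa),\tau(\lambda),\tau(\mu)$, and the identity $\langle K,L,M\rangle = (-1)^{\mathrm{sgn}(\tau)}\langle\tau(K),\tau(L),\tau(M)\rangle$ for triads transfers directly. For the cocycle condition, given four distinct ends $\kappa,\lambda,\mu,\nu$ with $\langle\kappa,\lambda,\mu\rangle = +1$ and $\langle\kappa,\mu,\nu\rangle = +1$, I would choose a single bounded disc $D$ simultaneously distinguishing all four (again possible by eventual disjointness: shrink the ends using larger and larger discs until $\kappa(D),\lambda(D),\mu(D),\nu(D)$ form a $4$-ad---one must check that the four continua are mutually nonseparating, which follows because each is contained in a distinct complementary component as in Lemma~\ref{lem:ComplementaryComponents}). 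By well-definedness the four ternary values are computed from this common $D$, and the cocycle condition for the $4$-ad $\kappa(D),\lambda(D),\mu(D),\nu(D)$---already proved---yields $\langle\kappa,\lambda,\nu\rangle = +1$ and $\langle\lambda,\mu,\nu\rangle = +1$.

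The main obstacle, as noted, is the well-definedness step, and within it the only subtlety is arranging the arcs to avoid the relevant continua so that connectedness forces each $\lambda(D)$ onto a single side; this is exactly where the eventual-disjointness hypothesis (rather than genuine disjointness) is used, since distinct elements of $\cD$ may share a compact piece but an arc running out to infinity through an end can be taken to avoid that compact overlap. Everything else is a routine translation of the triad-level statements, already proved in the previous proposition, to the level of ends.
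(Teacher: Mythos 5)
Your argument is correct and matches the paper's: both reduce the proposition to showing that $\langle\kappa(D),\lambda(D),\mu(D)\rangle$ is independent of the distinguishing disc (after which antisymmetry and the cocycle condition are inherited from the already-established circular order on $n$-ads), and both prove that independence by passing to nested discs, transferring an arc certificate between the two scales, and using connectedness of $\lambda(\cdot)$ together with eventual disjointness to pin it to one side. The only real difference is one of direction and is cosmetic: the paper transfers an arc between the larger sets $\kappa(D),\mu(D)$ down to the smaller ones by extending it inside neighborhoods chosen to miss the certificate arc, whereas you go from small to large, where ``extended'' should really be ``truncated to a sub-arc,'' since an arc between the smaller sets already ends in the larger ones but its interior may re-enter them.
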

\begin{proof}
All we need to check is that the order does not depend on the choice of disc.

Let $\kappa, \lambda, \mu$ be a triple in $\cE$ and let $D, D'$ be bounded open discs that distinguish $\kappa, \lambda, \mu$. Let $K = \kappa(D)$, $L = \lambda(D)$, and $M = \mu(D)$ and define $K'$, $L'$, and $M'$ similarly. Without loss of generality we may assume that $D \subset D'$ so that $K' \subset K$, etc.

Suppose that $\langle K, L, M \rangle  = +1$. Then there is an arc $\gamma$ from $K$ to $M$ such that $L$ is on the positive side of $\gamma$ and hence so is $L'$. So we can choose an arc $c$ from the positive side of $\gamma$ to $L'$. Let $U$ and $V$ be disjoint connected open neighborhoods of $K$ and $M$ that avoid $L \cup c$. Choose an arc $\gamma_0$ contained in $U$ that runs from the initial point of $\gamma$ to $K'$ and an arc $\gamma_1$ contained in $V$ that runs from the terminal point of $\gamma$ to $M'$. Then $\gamma' = \gamma_0 \cup \gamma \cup \gamma_1$ is an arc from from $K'$ to $M'$. The arc $c$ exhibits that $L' \subset C^+(\gamma')$ so $\langle K', L', M' \rangle  = +1$ as desired.
\end{proof}

Let $S$ be a circularly ordered set. Two pairs $x, y$ and $z, w$ of points in $S$ are \emph{linked} if either $z \in (x, y)$ and $w \in (y, x)$ or $w \in (x, y)$ and $z \in (y, x)$. Two subsets $A$ and $B$ in $S$ are linked if there are $x, y \in A$ and $z, w \in B$ that are linked. A subset $A \subset S$ \emph{separates} the subsets $B, C \in S$ if there are points $a, a' \in A$ such that $B \subset (a, a')$ and $C \subset (a', a)$. Note that this is not the same as topological separation in $S$ with the order topology.

\begin{proposition}\label{prop:DetectSeparation}
Let $K, L \subset P$ be disjoint unbounded continua in the plane. Then $\cE(K)$ and $\cE(L)$ do not link in the canonical circular order on $\cE(\{K, L\})$.

Let $K, L, M \subset P$ be disjoint unbounded continua in the plane. Then $K$ separates $L$ from $M$ if and only if $\cE(K)$ separates $\cE(L)$ from $\cE(M)$ in the canonical circular order on $\cE(\{K, L, M\})$.
\end{proposition}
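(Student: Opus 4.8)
The plan is to prove the two parts of Proposition~\ref{prop:DetectSeparation} separately, with the second (the equivalence) broken into its two implications; the non‑linking statement is the simplest and does the bulk of the work, since both implications of the equivalence can be reduced to it.

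\emph{Non‑linking.} Fix distinct ends $\kappa_1,\kappa_2\in\cE(K)$; I will show that the map $\lambda\mapsto\langle\kappa_1,\lambda,\kappa_2\rangle$ is constant on $\cE(L)$, which is precisely the assertion that $\cE(K)$ and $\cE(L)$ do not link. Given $\lambda_1,\lambda_2\in\cE(L)$, first choose a single bounded disc $D$ distinguishing $\kappa_1,\lambda_1,\kappa_2$ and also $\kappa_1,\lambda_2,\kappa_2$ (enlarge a disc from the stated distinguishing property). Since $K$ is connected and disjoint from the closed set $L$, it lies in a single component $N$ of $P\setminus L$, and $N$ is open and connected, hence arc‑connected, so there is an embedded arc $\gamma\subset N$ from $\kappa_1(D)$ to $\kappa_2(D)$, which after trimming we may take to have interior disjoint from $\kappa_1(D)\cup\kappa_2(D)$. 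By the independence of the circular order on a triad from the choice of defining arc, $\langle\kappa_1,\lambda_i,\kappa_2\rangle$ is read off from the side of $\gamma$ on which $\lambda_i(D)$ lies. Now $\gamma\subset N$ is disjoint from $L$; and $\lambda_i(D)$, being one element of the triad $\kappa_1(D),\lambda_i(D),\kappa_2(D)$, lies in $C(\kappa_1(D),\kappa_2(D))$, so the connected set $L$ — which meets this region and is disjoint from $\kappa_1(D)\cup\kappa_2(D)$ — lies entirely in $C(\kappa_1(D),\kappa_2(D))$, and being also disjoint from $\gamma$ it lies in a single one of $C^+(\gamma),C^-(\gamma)$. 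Hence $\lambda_1(D)$ and $\lambda_2(D)$ lie on the same side and $\langle\kappa_1,\lambda_1,\kappa_2\rangle=\langle\kappa_1,\lambda_2,\kappa_2\rangle$.

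\emph{``$\cE(K)$ separates'' $\Rightarrow$ ``$K$ separates''.} Suppose $\cE(K)$ separates $\cE(L)$ from $\cE(M)$ but $K$ does not separate $L$ from $M$, so $L$ and $M$ lie in a common component $W$ of $P\setminus K$. Join a point of $L$ to a point of $M$ by an arc $\delta\subset W$ with interior disjoint from $L\cup M$, and set $K'=L\cup\delta\cup M$, an unbounded continuum disjoint from $K$. Because $\delta$ is compact, for all sufficiently large discs $D$ we have $K'\setminus D=(L\setminus D)\sqcup(M\setminus D)$, so $\cE(K')$ is canonically the disjoint union of $\cE(L)$ and $\cE(M)$, and the circular order on $\cE(\{K,K'\})$ restricts on these ends to the circular order on $\cE(\{K,L,M\})$ (both are computed from the underlying continua using distinguishing discs containing $\delta$). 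Taking $\kappa_1,\kappa_2\in\cE(K)$ as in the definition of separation and then $\lambda\in\cE(L)\subset(\kappa_1,\kappa_2)$ and $\mu\in\cE(M)\subset(\kappa_2,\kappa_1)$, the pairs $\{\kappa_1,\kappa_2\}\subset\cE(K)$ and $\{\lambda,\mu\}\subset\cE(K')$ are linked — contradicting the non‑linking statement just proved.

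\emph{``$K$ separates'' $\Rightarrow$ ``$\cE(K)$ separates''.} Suppose $K$ separates $L$ from $M$; let $U\supset L$ and $V\supset M$ be the distinct, necessarily unbounded, components of $P\setminus K$. By Lemma~\ref{lem:ComplementsOneEnded} each of $U,V$ has exactly one end, and I would first deduce that for every sufficiently large disc $D$ there is a single component $U_D$ of $U\setminus D$ containing every tentacle $\lambda(D)$, $\lambda\in\cE(L)$: each $\lambda(D)$ is an unbounded continuum, hence carries an end, and if two distinct components of $U\setminus D$ carried ends then $U$ would have two ends. Likewise a single component $V_D$ of $V\setminus D$ contains all tentacles of $M$, and $U_D,V_D$ are disjoint unbounded connected open subsets of $P\setminus(K\cup D)$ lying in different components of $P\setminus K$. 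It then remains to produce two ends $\kappa_1,\kappa_2\in\cE(K)$ together with a defining arc $\gamma$ from $\kappa_1(D)$ to $\kappa_2(D)$, routed through $D$, with $U_D\subset C^+(\gamma)$ and $V_D\subset C^-(\gamma)$; this forces $\langle\kappa_1,\lambda,\kappa_2\rangle=+1$ for all $\lambda\in\cE(L)$ and $\langle\kappa_1,\mu,\kappa_2\rangle=-1$ for all $\mu\in\cE(M)$, i.e. $\cE(L)\subset(\kappa_1,\kappa_2)$ and $\cE(M)\subset(\kappa_2,\kappa_1)$, as required. This construction is the main obstacle: the part of $K$ separating $U_D$ from $V_D$ need not be a single arc or continuum, and $\partial U$ may be disconnected or have more than two ends, so one must isolate exactly the two ``extreme'' tentacles of $K$ flanking $U$. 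I would do this by passing to the one‑point compactification $\hat P\simeq S^2$, where $\hat K=K\cup\{\infty\}$ is a continuum and $U,V$ are complementary domains both having $\infty$ on their frontier, accessible from each by a proper ray (Lemma~\ref{lem:RayApproachingEnd}); a Jordan‑curve argument in the style of Lemmas~\ref{lem:SeparatingArc}--\ref{lem:ArcSides}, applied to a small circle about $\infty$, picks out these two tentacles and yields the arc $\gamma$. Everything else is bookkeeping with the definitions.
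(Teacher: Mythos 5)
Your non-linking argument and your proof that ``$\cE(K)$ separates $\Rightarrow$ $K$ separates'' are correct and essentially identical to the paper's: the first rests on choosing an arc between two tentacles of $K$ that misses the connected set $L$, and the second on joining $L$ to $M$ by an arc $\delta$ in a common complementary component of $K$ and applying non-linking to $K$ and $L\cup\delta\cup M$. Your justification that the connecting arc exists (via the component of $P\setminus L$ containing $K$) and that the circular orders on $\cE(\{K,L\cup\delta\cup M\})$ and $\cE(\{K,L,M\})$ agree is slightly more careful than the paper's, but the route is the same.

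The direction ``$K$ separates $\Rightarrow$ $\cE(K)$ separates'' is not actually proved. You correctly reduce it to producing two ends $\kappa_1,\kappa_2\in\cE(K)$ with $\cE(L)\subset(\kappa_1,\kappa_2)$ and $\cE(M)\subset(\kappa_2,\kappa_1)$, and you yourself flag this construction as ``the main obstacle,'' but the proposed resolution --- a Jordan-curve argument on a circle about $\infty$ that ``picks out the two extreme tentacles of $K$ flanking $U$'' --- is only a gesture, and it aims at an object that need not exist: the paper's own Example~\ref{ex:NoRightmostEnd} exhibits an unbounded continuum with no extremal (``rightmost'') end, so there is in general no pair of outermost tentacles of $K$ adjacent to $U$ to isolate. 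What is needed is not extremal ends but merely one end of $K$ on each side, and the paper gets these by a different device: take an arc $\gamma$ from $L$ to $M$ (it must cross $K$), let $\gamma'$ be the minimal sub-arc containing $\gamma\cap K$, set $K'=K\cup\gamma'$ and split it as $K'_\pm=K'\cap\overline{C^\pm(\gamma;L,M)}$; since $K'_+\cap K'_-$ is connected and $K'$ separates $L$ from $M$, the Janiszewski-type theorem (\cite{Wilder}, Thm.~II.5.23) forces both $K'_\pm$ to be unbounded, hence each contains a tentacle and therefore an end of $K$, and these two ends do the separating. You should either carry out your compactification argument in full --- including why the arc $\gamma$ you produce has $U_D$ and $V_D$ on opposite sides and why the relevant triples are triads --- or replace it with an argument of this cutting-and-separation type.
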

\begin{proof}
Suppose that, contrary to the first statement, there are ends $\kappa_1, \kappa_2 \in \cE(K)$ and $\lambda_1, \lambda_2 \in \cE(L)$ such that $\langle \kappa_1, \lambda_1, \kappa_2 \rangle  = +1$ and $\langle \kappa_1, \lambda_2, \kappa_2 \rangle  = -1$. Let $D$ be a bounded open disc distinguishing the $\kappa_i(D)$ and $\lambda_j(D)$ and choose an oriented arc $\gamma$ from $\kappa_1(D)$ to $\kappa_2(D)$ that avoids $L$. Then $\lambda_1(D)$ and $\lambda_2(D)$ are on the same side of $\gamma$ since they are both contained in $L$, and $L$ is connected and disjoint from $\gamma$.

One direction in the second statement follows from the first. Suppose that $K$ does not separate $L$ from $M$. Then we can choose an arc $\gamma$ from $L$ to $M$ disjoint from $L$ and there is an order-preserving bijection between $\cE(L \cup \gamma \cup M)$ and $\cE(L) \cup \cE(M)$. But $L \cup \gamma \cup M$ and $K$ are disjoint so by the first statement their ends do not link, hence $\cE(K)$ does not separate $\cE(L)$ from $\cE(M)$.

It remains to show that if $K$ separates $L$ from $M$ then there are ends $\kappa_1, \kappa_2 \in \cE(K)$, $\lambda \in \cE(L)$, and $\mu \in \cE(M)$ such that $\lambda \in (\kappa_1, \kappa_2)$ and $\mu \in (\kappa_2, \kappa_1)$. Let $\gamma$ be an arc from $L$ to $M$ and let $\gamma'$ be the minimal connected sub-arc that contains $\gamma \cap K$. Set 
\[ K' := K \cup \gamma' \]
and
\[ K'_\pm := K' \cap \overline{C^\pm(\gamma; L, M)}. \]
Note that the $K'_\pm$ are both closed and connected and neither one separates $L$ from $M$. Also, $K'_+ \cap K'_-$ is connected so in order for $K' = K'_+ \cup K'_-$ to separate $L$ from $M$ both $K'_+$ and $K'_-$ must be unbounded by \cite{Wilder}, Theorem II.5.23. \footnote{If $x$ and $y$ are points of $S^2$ which are not separated by either of the closed sets $A$ and $B$, and $A \cap B$ is connected,then $x$ and $y$ are not separated by $A \cup B$.} So we can choose $\kappa_1$ to be any end of $L$ such that $\kappa_1(D) \subset L'_+$ for sufficiently large $D$ and similarly for $\kappa_2$.
\end{proof}

\subsection{Useful properties of the circular order}
We'll collect a few lemmas that will be useful later on.

\begin{lemma} \label{lem:CanSeparate}
Let $K \subset P$ be an unbounded continuum in the plane and $\kappa_1, \kappa_2, \kappa_3 \in \cE(K)$ with $\kappa_2 \in (\kappa_1, \kappa_3)$. Suppose $U \subset P$ is an connected, bounded open set that distinguishes $\kappa_1, \kappa_2, \kappa_3$. Then $U$ distinguishes $\kappa_2$ from any end in $(\kappa_3, \kappa_1)$.
\end{lemma}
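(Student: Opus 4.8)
The plan is to argue by contradiction. Suppose some end $\lambda \in (\kappa_3, \kappa_1)$ is \emph{not} distinguished from $\kappa_2$ by $U$, i.e.\ $\kappa_2(U) = \lambda(U)$; we will deduce that $\lambda \in (\kappa_1, \kappa_3)$, and since $(\kappa_1, \kappa_3)$ and $(\kappa_3, \kappa_1)$ are disjoint this is the desired contradiction. First note we may assume $\lambda$ is an end of $K$: if $\lambda$ were an end of some decomposition element $L \neq K$ then $\kappa_2(U)$, being an unbounded subset of $K$, could not equal $\lambda(U) \subseteq L$, since $L$ meets $K$ in a bounded set. So assume $\lambda \in \cE(K)$ and set $A := \kappa_2(U) = \lambda(U)$, $A_1 := \kappa_1(U)$, and $A_3 := \kappa_3(U)$; since $U$ distinguishes $\kappa_1, \kappa_2, \kappa_3$, the sets $A_1, A, A_3$ form a triad.

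Choose a bounded open disc $D$ with $U \subseteq D$ that distinguishes $\kappa_1, \lambda, \kappa_3$. Such a disc exists because any triple of distinct ends is distinguished by some disc, and then enlarging that disc to contain $U$ preserves the property: sub-continua of a triad, one chosen inside each member, again form a triad, since neither disjointness nor the property that no one of them separates the other two is destroyed by passing to subsets. In particular $D$ also distinguishes $\kappa_1, \kappa_2, \kappa_3$, and by the nesting property of ends we have $\kappa_1(D) \subseteq A_1$, $\kappa_3(D) \subseteq A_3$, and $\kappa_2(D), \lambda(D) \subseteq \lambda(U) = A$. Since the triad $A_1, A, A_3$ is mutually nonseparating, $A$ does not separate $A_1$ from $A_3$, so $A_1$ and $A_3$ lie in one component $W$ of $P \setminus A$; pick an arc $\Gamma$ from $\kappa_1(D)$ to $\kappa_3(D)$ contained in $W$, so $\Gamma$ is disjoint from $A$.

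Now compare sides with respect to $\Gamma$. Because $D$ distinguishes $\kappa_1, \kappa_2, \kappa_3$ and $\kappa_2 \in (\kappa_1, \kappa_3)$ we have $\langle \kappa_1(D), \kappa_2(D), \kappa_3(D) \rangle = +1$, and since the circular order of a triad is independent of the connecting arc this says $\kappa_2(D) \subset C^+(\Gamma; \kappa_1(D), \kappa_3(D))$. But $A$ is connected, contains $\kappa_2(D)$, and is disjoint from $\kappa_1(D) \cup \Gamma \cup \kappa_3(D)$, so all of $A$ — and in particular $\lambda(D)$ — lies in $C^+(\Gamma; \kappa_1(D), \kappa_3(D))$. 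Hence $\langle \kappa_1(D), \lambda(D), \kappa_3(D) \rangle = +1$, and because $D$ distinguishes $\kappa_1, \lambda, \kappa_3$ this is exactly $\langle \kappa_1, \lambda, \kappa_3 \rangle = +1$, i.e.\ $\lambda \in (\kappa_1, \kappa_3)$, as claimed.

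I expect the only step requiring real care rather than bookkeeping to be the production of the disc $D$ — concretely, the assertion that ``being distinguished by $D$'' is inherited when $D$ is enlarged, which rests on the elementary planar fact that a sub-continuum of one member of a triad cannot separate sub-continua of the other two members. Once that is in hand, the rest is just the definition of the circular order on ends together with its independence of the distinguishing disc and of the connecting arc, all of which have been established above. (If one reads ``distinguishes $\kappa_2$ from $\lambda$'' in the stronger sense that $\kappa_2(U)$ and $\lambda(U)$ are \emph{disjoint}, the same argument applies, since distinct components of the closed set $K \setminus U$ are closed and hence disjoint.)
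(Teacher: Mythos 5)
Your proof is correct. The paper disposes of this in two lines: assuming $\kappa_2(U)=\kappa_4(U)$ for some $\kappa_4\in(\kappa_3,\kappa_1)$, it invokes Proposition~\ref{prop:DetectSeparation} to conclude that this common component would have to separate $\kappa_1(U)$ from $\kappa_3(U)$, contradicting the fact that $U$ distinguishes the three ends (i.e.\ that $\kappa_1(U),\kappa_2(U),\kappa_3(U)$ form a triad). You instead unwind the definition of the circular order directly: you pass to a larger distinguishing disc $D$, choose a single arc $\Gamma$ from $\kappa_1(D)$ to $\kappa_3(D)$ avoiding $A=\kappa_2(U)=\lambda(U)$, and observe that the connectedness of $A$ forces $\kappa_2(D)$ and $\lambda(D)$ onto the same side of $\Gamma$, so $\langle\kappa_1,\lambda,\kappa_3\rangle=\langle\kappa_1,\kappa_2,\kappa_3\rangle=+1$, contradicting $\lambda\in(\kappa_3,\kappa_1)$. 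The two arguments exploit the same underlying fact; yours trades the citation for a self-contained computation, at the cost of the extra bookkeeping with $D$. The one step that genuinely needed checking --- that enlarging a distinguishing disc preserves the triad property --- you justify correctly (disjointness passes to subsets, and if $Y',Z'$ lie in one component of $P\setminus X'$ then that component is a connected subset of $P\setminus X$ for any $X\subseteq X'$, so $Y\subseteq Y'$ and $Z\subseteq Z'$ are not separated by $X$). Your preliminary reduction to $\lambda\in\cE(K)$ is not needed for the lemma as stated (which concerns only ends of the single continuum $K$), but it is harmless and in fact useful for the way the lemma is later applied to a generalized unbounded decomposition.
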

\begin{proof}
Suppose that $U$ does not distinguish $\kappa_2$ from $\kappa_4 \in (\kappa_3, \kappa_1)$. Then $\kappa_2(U) = \kappa_4(U)$. By Proposition~\ref{prop:DetectSeparation}, $\kappa_2$ separates $\kappa_1(U)$ from $\kappa_3(U)$, a contradiction.
\end{proof}

\begin{lemma} \label{lem:AvoidBoundedSet}
Let $K, L$ and $M_1, M_2, ... M_n$ be disjoint mutually nonseparating unbounded continua in the plane such that $M_1 \cup ... \cup M_n$ does not separate $K$ and $L$ in the circular order on $\{K, L, M_1, M_2, ... M_n\}$. Let $A$ be a compact set not intersecting $K$ and $L$. Then there is an arc $\gamma$ from $K$ to $L$ that avoids the $M_i$ and $A$.
\end{lemma}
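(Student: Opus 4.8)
The plan is to reduce to routing an arc inside a single complementary disc, and then to use the circular-order hypothesis to see that this disc has a ``channel to infinity'' along which the compact set $A$ cannot reach. By Lemma~\ref{lem:ComplementaryComponents} there is a unique complementary component $C := C(K,L,M_1,\dots,M_n)$ of $K \cup L \cup \bigcup_i M_i$ that limits on all of $K$, $L$ and each $M_i$; as in the discussion preceding Lemma~\ref{lem:SeparatingArc} it is an open disc, and being a component of the complement it is disjoint from every $M_i$. Hence any arc from $K$ to $L$ whose interior lies in $C$ automatically misses $\bigcup_i M_i$, and it suffices to produce such an arc that also misses $A$. Thus $\bigcup_i M_i$ costs nothing, and the circular-order hypothesis is exactly what is needed to get past the remaining obstruction $A$.

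I would then extract an ``$M$-free channel'' from the hypothesis. The ends of $K$, $L$ and the $M_i$ occur, in the cyclic order around $C$, in the circular order of Section~\ref{sec:CO}. Since $\cE(K)$ and $\cE(L)$ do not link (first half of Proposition~\ref{prop:DetectSeparation}), all of $\cE(L)$ lies in a single complementary arc $J$ of $\overline{\cE(K)}$ in the completed circle; write $(a,c)$ and $(d,b)$ for the two sub-arcs of $J$ between its endpoints and $\overline{\cE(L)}$. If each of $(a,c)$, $(d,b)$ contained an end of some $M_i$, those two $M_i$-ends would witness $\bigcup_i\cE(M_i)$ separating $\cE(K)$ from $\cE(L)$, against the hypothesis; so one of them, say $(a,c)$, contains no $M_i$-end. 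Concretely, there are a $K$-end and an $L$-end, approached by $C$, between which one of the two arcs contains no end of $K$, of $L$, or of any $M_i$; running two properly embedded rays of $C$ out along this arc (Lemma~\ref{lem:RayApproachingEnd}) and joining them to $K$ and $L$ by arcs far out, we obtain a sub-disc $R \subseteq C$ that escapes to infinity only through this $M$-free channel, with its boundary arcs on $K$ and $L$ lying outside any prescribed ball.

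To finish, I would route an arc from $K$ to $L$ through the part of $R$ lying outside a ball containing $A$. Its interior lies in $C$, so it misses $\bigcup_i M_i$, and it lies outside that ball, so it misses $A$ — as required. A cleaner way to package the last two steps at once is by contradiction in $\hat P \cong S^2$. If $\bigcup_i M_i \cup A$ separated $K$ from $L$ in $P$, then $\bigcup_i M_i \cup A \cup \{\infty\}$ would separate a point of $K$ from a point of $L$ in $S^2$, so some connected component $D$ of it would (a closed subset of $S^2$ that separates two points has a component that does; see \cite{Wilder}). Now $D$ cannot be a bounded component contained in $A$, since a bounded connected planar set cannot separate two unbounded continua; hence $\infty \in D$, and since each $M_i \cup \{\infty\}$ is connected, $D = \bigcup_i M_i \cup A' \cup \{\infty\}$ with $A' := A \cap D$ compact. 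Then $\bigcup_i M_i \cup A'$ is a closed subset of $P$, disjoint from $K \cup L$, that separates $K$ from $L$ and (as $A'$ is bounded) has the same ends as $\bigcup_i M_i$; by Proposition~\ref{prop:DetectSeparation} this forces $\bigsqcup_i \cE(M_i)$ to separate $\cE(K)$ from $\cE(L)$ in the circular order, a contradiction.

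The step I expect to be the main obstacle is the bookkeeping with ends. In the first approach one must identify the abstract circular order on $\cE(\{K,L,M_1,\dots,M_n\})$ with the cyclic order of ends approached by $C$ — so that an $M_i$-end ``in the channel'' genuinely obstructs the arc — and must make ``route out through the $M$-free channel'' rigorous even though $\partial C$ can be wild. In the $S^2$ approach the same difficulty resurfaces as the need for a mild extension of Proposition~\ref{prop:DetectSeparation} to the case where the separating set is a finite eventually-disjoint union of unbounded continua (equivalently a connected closed subset of $S^2$ through $\infty$), since $\bigcup_i M_i$ need not be connected; the proof of that proposition should adapt after joining the pieces by bounded auxiliary arcs. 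In every version it is the compactness of $A$ that makes things work: adjoining a bounded set changes neither the ends of $\bigcup_i M_i$ nor, once one restricts to the complementary component that reaches infinity, which unbounded continua are separated.
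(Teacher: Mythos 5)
Your ``cleaner'' $S^2$ version is, up to running it as a contrapositive, exactly the paper's proof, and the ``mild extension'' of Proposition~\ref{prop:DetectSeparation} that you flag as the remaining obstacle is not actually needed. The paper first reduces, as in your opening paragraph, to avoiding $A' = A \cap \overline{C(K,L,M_1,\dots,M_n)}$, and then chooses a connected compact set $B$ meeting $A'$ and every $M_i$ --- which can be taken disjoint from $K \cup L$, since $A$ is at positive distance from $K \cup L$ and all the pieces to be joined meet $\overline{C}$, so the joining can be routed through $C$. The set $M := A' \cup B \cup \bigcup_i M_i$ is then a single unbounded continuum disjoint from $K$ and $L$, and since $A' \cup B$ is compact its ends are exactly the ends of the $M_i$, in the same circular order relative to $\cE(K)$ and $\cE(L)$. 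The hypothesis therefore says $\cE(M)$ does not separate $\cE(K)$ from $\cE(L)$, and Proposition~\ref{prop:DetectSeparation}, applied verbatim to the triple $K, L, M$, yields an arc from $K$ to $L$ missing $M$, hence missing the $M_i$ and $A'$, hence missing $A$. In other words, the ``joining by bounded auxiliary arcs'' you mention in passing is the whole proof: it converts the disconnected obstruction into one to which the proposition already applies, so you never need to re-prove the proposition for disconnected sets, never need the component analysis in $S^2$, and never need the ``channel'' construction of your second paragraph (which, as you suspect, would be unpleasant to make rigorous since $\partial C$ can be wild and the cyclic order of ends ``around $C$'' is not something the paper ever formalizes).
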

\begin{proof}
It suffices to show that there is an arc avoiding the $M_i$ and $A' = A \cap \overline{C(K, L, M_1, ..., M_n)}$. Let $B$ be a connected compact set intersecting $A'$ and all of the $M_i$ and set $M = A \cup B \cup \bigcup_i M_i$. The ends of $M$ do not separate the ends of $K$ and $L$ so the claim follows from Proposition~\ref{prop:DetectSeparation}.
\end{proof}

\begin{lemma}\label{lem:OutermostArcs}
Let $K, M \subset P$ be disjoint closed, unbounded sets in the plane and let $S \subset P$ be an embedded circle with the usual orientation that intersects both $K$ and $M$. Then there is a unique ``outermost'' sub-arc $\gamma_o \subset S$ that runs from from $K$ to $M$ with orientation inherited from $S$ such that if $\gamma'$ is a component of $S \cap C^+(\gamma_o)$ then the endpoints of $\gamma'$ are both in $K$ or both in $L$.
\end{lemma}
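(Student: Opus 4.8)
The plan is to identify the relevant subarcs combinatorially, put a total order on them via Lemma~\ref{lem:ArcSides}, and take the extreme element. First I would reformulate the statement. A subarc $\gamma_o\subset S$ that ``runs from $K$ to $M$'' and for which $C^+(\gamma_o)$ is even defined must be an arc from $K$ to $M$ in the sense of the text, i.e. have interior disjoint from $K\cup M$; being a subarc of $S$, its interior is then a component of $S\setminus(K\cup M)$ whose closure joins a point of $K$ to a point of $M$. I will call such components the \emph{$KM$-arcs} of $S$, those joining $M$ to $K$ the \emph{$MK$-arcs}, and all of them the \emph{crossing arcs}. Since $\partial C^+(\gamma_o)\subset K\cup M\cup\gamma_o$, and no component of $S\cap C^+(\gamma_o)$ can limit on an interior point of $\gamma_o$ (near such a point $S$ coincides with $\gamma_o$, which misses the open set $C^+(\gamma_o)$), the endpoints of any component $\gamma'$ of $S\cap C^+(\gamma_o)$ lie in $K\cup M$; and $\gamma'$ has one endpoint in each of $K$ and $M$ precisely when $\overline{\gamma'}$ is a crossing arc whose interior lies in $C^+(\gamma_o)$, the converse being clear as well. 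So the property demanded of $\gamma_o$ is equivalent to: \emph{no crossing arc other than $\gamma_o$ has interior contained in $C^+(\gamma_o)$}.

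Next I would show $S$ has only finitely many crossing arcs. The sets $S\cap K$ and $S\cap M$ are disjoint compact subsets of $S$, hence lie at some positive distance $\epsilon$; the closure of each crossing arc joins a point of one to a point of the other, so has diameter $\ge\epsilon$, while distinct crossing arcs have disjoint interiors. A Hausdorff-limit and pigeonhole argument then forces finiteness: a limit of infinitely many distinct crossing-arc closures would be a nondegenerate subarc $A$ of $S$, whence all but finitely many of the approximating arcs would contain a subarc of definite length near $A$, contradicting disjointness. Moreover, since $S\cap K$ and $S\cap M$ are nonempty, an elementary argument (e.g. looking at the first point of $S\cap K$ reached along $S$ after a point of $S\cap M$) produces at least one $KM$-arc; in fact the crossing arcs alternate $KM,MK,KM,\dots$ around $S$, because the stretch of $S$ between consecutive crossing arcs avoids $K$ or avoids $M$.

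Now let $\gamma_1,\dots,\gamma_m$ ($m\ge 1$) be the $KM$-arcs. They are pairwise disjoint (distinct components of $S\setminus(K\cup M)$; and two distinct $KM$-arcs cannot share an endpoint, since a common point of $K$ would be the initial point of both and so force their interiors to coincide, and similarly for $M$). Hence Lemma~\ref{lem:ArcSides} applies to every pair, and setting $\gamma_i\prec\gamma_j$ iff $\gamma_j\subset C^+(\gamma_i)$ (equivalently $\gamma_i\subset C^-(\gamma_j)$, by part (1) of that lemma) defines a strict total order: antisymmetry comes from part (1), and transitivity because $\gamma_j\subset C^+(\gamma_i)$ and $\gamma_k\subset C^+(\gamma_j)$ give $C^+(\gamma_j)\subset C^+(\gamma_i)$ by part (3), hence $\gamma_k\subset C^+(\gamma_i)$. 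A finite nonempty total order has a unique maximum $\gamma_o$. No other $KM$-arc lies in $C^+(\gamma_o)$, since that would say $\gamma_o\prec\gamma_i$, contradicting maximality; and $\gamma_o$ is the \emph{only} $KM$-arc with the required property, because if $\gamma_o'$ were another, then, $\gamma_o$ and $\gamma_o'$ being distinct $KM$-arcs, Lemma~\ref{lem:ArcSides} would put one of them in the $C^+$ of the other, contradicting that neither such $C^+$ contains a crossing arc.

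The remaining step, which I expect to be the main obstacle, is to rule out an $MK$-arc in $C^+(\gamma_o)$; this is the only place the hypothesis that $S$ carries its ``usual'' orientation enters, together with the sign convention for the positive side of an oriented arc. Using the alternation of crossing arcs around $S$ and the orientation convention, I would check that each $MK$-arc lies on the \emph{negative} side of the $KM$-arcs adjacent to it along $S$, and then, propagating this along $S$, that the maximal $KM$-arc $\gamma_o$ has every crossing arc of either orientation contained in $C^-(\gamma_o)$. Equivalently, one may extend $\prec$ to a total order on all crossing arcs (after reversing the orientation of each $MK$-arc so that they too become arcs from $K$ to $M$); the orientation hypothesis should force the maximum of this larger order to be one of the $\gamma_i$, and the $C^+$ of that element — which coincides with $C^+(\gamma_o)$ — then contains no crossing arc at all. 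Getting this orientation bookkeeping exactly right is the delicate point; everything else is planar point-set topology packaged by Lemma~\ref{lem:ArcSides}.
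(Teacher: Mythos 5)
Your setup is sound and your existence argument is actually cleaner than the paper's: finiteness of the set of crossing arcs (via uniform continuity of a parametrization of $S$ together with the positive distance between the compact sets $S\cap K$ and $S\cap M$) lets you take a maximum of the order from Lemma~\ref{lem:ArcSides} directly, where the paper instead allows countably many crossing arcs and extracts a maximal element by a Hausdorff-limit argument. The reformulation of the required property as ``no other crossing arc has interior in $C^+(\gamma_o)$'' is also correct, and maximality does dispose of the other $K$-to-$M$ arcs.

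The genuine gap is exactly the step you defer: showing that no $M$-to-$K$ arc lies in $C^+(\gamma_o)$, equivalently that the maximum of the order on \emph{all} crossing arcs (with the $M$-to-$K$ ones reversed) is attained by an arc carrying the orientation inherited from $S$. This is where the hypothesis that $S$ has the usual orientation enters, and ``I would check \dots propagating this along $S$ \dots should force'' is not a proof; the local claim that each $MK$-arc sits on the negative side of its neighbours does not obviously propagate to a global statement about $\gamma_o$, which need not be adjacent to a given $MK$-arc along $S$. The paper closes this with a specific argument you should reproduce or replace: let $\gamma_o$ be the maximum over all (suitably reoriented) crossing arcs, set $K' = K\cup\bigl(\bigcup_{\gamma\in\Gamma_K}\gamma\bigr)$ and $M' = M\cup\bigl(\bigcup_{\gamma\in\Gamma_M}\gamma\bigr)$; then $C^+(\gamma_o;K',M')$ is disjoint from $S$ by maximality and unbounded by Lemma~\ref{lem:SeparatingArc}, hence lies in the unbounded component of $P\setminus S$, which is impossible if $\gamma_o$ is traversed against the orientation of $S$. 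Without this (or an equivalent) argument the lemma is not established, since it is precisely the orientation statement that Lemma~\ref{lem:OutermostArcsDisjoint} later relies on.
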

\begin{proof}
Note that $S \setminus (K \cup L)$ is a countable collection of oriented open arcs. Let $\Gamma$ be the collection of closures of these arcs. We can partition
\[ \Gamma = \Gamma_K \cup \Gamma_M \cup \Gamma_{K,M} \cup \Gamma_{M,K} \]
where
\[ \Gamma_K = \{ \text{sub-arcs from } K \text{ to itself} \}, \]
\[ \Gamma_M = \{ \text{sub-arcs from } M \text{ to itself} \}, \]
\[ \Gamma_{K,M} = \{ \text{sub-arcs from } K \text{ to } M \}, \]
and
\[ \Gamma_{M,K} = \{ \text{sub-arcs from } M \text{ to } K \}. \]

Let $\Gamma_s = \Gamma_{K,M} \cup \Gamma^-_{M,K}$, where the minus means to reverse the orientation of the arcs. By Lemma~\ref{lem:ArcSides}, there is a well-defined linear order on $\Gamma_s$, where we define $\gamma < \gamma'$ if $\gamma' \subset C^+(\gamma)$. We will show that $\Gamma_s$ has a maximum element and that this maximum element lies in $\Gamma_{K,M}$.

Suppose $\Gamma_s$ has no maximum element. Then we can find a sequence $\gamma_0 < \gamma_1 < \gamma_2, ...$ in $\Gamma_s$ such that for each $\gamma \in \Gamma_s$ there exists $j$ such that $\gamma < \gamma_j$. Then let $\gamma_\infty$ be the Hausdorff limit of a convergent subsequence of the $\gamma_i$, which exists because $S$ is compact. Although $\gamma_\infty$ is a sub-arc of $S$, it is not necessarily an element of $\Gamma$ since $\mathring{\gamma}_\infty$ might intersect $K$ or $M$. However there is at least one sub-arc $\gamma \subset \gamma_\infty$ from $K$ to $M$ or $M$ to $K$. Then $\gamma \in \Gamma_s$ and $\gamma_i < \gamma$ for all $i$, a contradiction. So there must be a maximum element $\gamma_o \in \Gamma$.

We now turn to showing that $\gamma_o \in \Gamma_{K, M}$ rather than $\Gamma_{M, K}^-$. Let
\[ K' = K \cup (\bigcup_{\gamma \in \Gamma_K} \gamma) \]
and
\[ M' = M \cup (\bigcup_{\gamma \in \Gamma_M} \gamma). \]
Then $K'$ and $L'$ are still disjoint closed, unbounded sets so $C := C^+(\gamma_o; K', L')$ is unbounded (by Lemma~\ref{lem:SeparatingArc}). Note that $C$ is now disjoint from $S$, so if $\gamma_o$ were oriented from $M$ to $K$ then $C$ would be contained in the bounded component of $P \setminus S$, which is impossible.
\end{proof}

\begin{lemma}\label{lem:OutermostArcsDisjoint}
Let $K, L, M, N \subset P$ be disjoint closed, unbounded, mutually nonseparating sets in the plane such that $K, L, M, N$ is a positively oriented quadruple, and let $S \subset P$ be an embedded circle intersecting all four sets. Then the outermost arc from $K$ to $L$ is disjoint from the outermost arc from $M$ to $N$.
\end{lemma}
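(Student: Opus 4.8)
The plan is to cut $P$ by a single arc into two regions that separately contain the two outermost arcs $\gamma_{KL}$ (from $K$ to $L$) and $\gamma_{MN}$ (from $M$ to $N$), so that these arcs cannot meet. First I would record consequences of the hypotheses. From the lemma following Lemma~\ref{lem:ComplementaryComponents} — that in a triad $A,B,C$ one has $B\subset C(A,C)$ — together with mutual non-separation, one gets $M,N\subset C(K,L)$ and $K,L\subset C(M,N)$. Since $K,L,M,N$ is positively oriented, the antisymmetry and cocycle conditions on the circular order give $\langle K,M,L\rangle=\langle K,N,L\rangle=-1$ and $\langle M,K,N\rangle=\langle M,L,N\rangle=-1$; equivalently, $M$ and $N$ lie in $C^-(\gamma;K,L)$ for every arc $\gamma$ from $K$ to $L$ that misses them, and dually $K,L$ lie in $C^-(\delta;M,N)$ for every arc $\delta$ from $M$ to $N$ that misses them. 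In particular $M$ cannot be contained in $C^+(\gamma_{KL};K,L)$, since otherwise $\gamma_{KL}$ itself would be an arc from $K$ to $L$ missing $M$, forcing $\langle K,M,L\rangle=+1$.

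The heart of the argument, and the step I expect to be the main obstacle, is the claim that $\mathring\gamma_{KL}$ misses $M\cup N$, and symmetrically that $\mathring\gamma_{MN}$ misses $K\cup L$. The difficulty is that $\gamma_{KL}$ is outermost only among sub-arcs of $S$, so a priori its interior could wander through $M$. What rules this out is the precise conclusion of Lemma~\ref{lem:OutermostArcs} — every component of $S\cap C^+(\gamma_{KL};K,L)$ has both endpoints in $K$ or both in $L$ — combined with $\langle K,M,L\rangle=-1$ and with the fact that $S$ is \emph{embedded}. Suppose for contradiction that $\mathring\gamma_{KL}$ met $M$. Since $M$ is connected, lies in $C(K,L)=C^+(\gamma_{KL})\sqcup\mathring\gamma_{KL}\sqcup C^-(\gamma_{KL})$, and is not contained in $C^+(\gamma_{KL})$, it either lies in $\mathring\gamma_{KL}\cup C^-(\gamma_{KL})$ or it also meets $C^+(\gamma_{KL})$; in the latter case one would have a sub-arc of $S$ issuing from a point of $M$ into $C^+(\gamma_{KL};K,L)$, which one traces back to an endpoint in $K\cup L$ to contradict Lemma~\ref{lem:OutermostArcs}, while in the former case the embeddedness of $S$ (whose sub-arc $\gamma_{MN}$ acts as a barrier the sub-arc $\gamma_{KL}$ cannot cross) together with the unboundedness of $M$ produces the same kind of forbidden sub-arc. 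Getting this bookkeeping right — and pinning down exactly how embeddedness is used — is the main technical content I would need to supply.

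Granting the claim, I would finish as follows. Since $M$ is connected, lies in $C(K,L)$, and is disjoint from $\gamma_{KL}$ (its endpoints lie in $K\cup L$ and its interior misses $M$), it lies entirely in $C^+(\gamma_{KL};K,L)$ or entirely in $C^-(\gamma_{KL};K,L)$, and by the sign computation it is the latter; likewise $N\subset C^-(\gamma_{KL};K,L)$. Next, $\mathring\gamma_{MN}$ is disjoint from $K\cup L\cup M\cup N$ — it misses $M\cup N$ because $\gamma_{MN}$ is an arc from $M$ to $N$, and it misses $K\cup L$ by the symmetric half of the claim — hence it is a connected subset of $C(K,L)$. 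If it met $\mathring\gamma_{KL}$, then $\mathring\gamma_{KL}\cup\mathring\gamma_{MN}$ would be a connected open subset of $S$ disjoint from $S\cap(K\cup L\cup M\cup N)$ yet having the endpoints $k,l$ of $\gamma_{KL}$ as limit points; but a component of $S$ minus that finite set has only two endpoints, so this component would be $\mathring\gamma_{KL}$ itself, giving $\gamma_{MN}\subset\gamma_{KL}$ and hence $m\in\gamma_{KL}$ for the endpoint $m\in M$, contradicting the claim. Therefore $\mathring\gamma_{MN}$ is a connected subset of $C^+(\gamma_{KL})\sqcup C^-(\gamma_{KL})$ meeting $C^-(\gamma_{KL})$ near $m$, so $\mathring\gamma_{MN}\subset C^-(\gamma_{KL};K,L)$ and in particular $\mathring\gamma_{MN}\cap\gamma_{KL}=\emptyset$. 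Finally $\gamma_{KL}\cap\gamma_{MN}\subset\gamma_{KL}\cap\{m,n\}$, and since $m\in M$ and $n\in N$ lie in neither $K\cup L$ (the endpoints of $\gamma_{KL}$) nor $\mathring\gamma_{KL}$ (by the claim), we conclude $\gamma_{KL}\cap\gamma_{MN}=\emptyset$.
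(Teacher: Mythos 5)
The pivotal step of your argument --- the ``claim'' that $\mathring\gamma_{KL}$ misses $M\cup N$ (and $\mathring\gamma_{MN}$ misses $K\cup L$) --- is exactly the step you flagged as still needing to be supplied, and unfortunately it is false, so the strategy cannot be completed. Here is a counterexample. Take $K,L,M,N$ to be the four coordinate rays $\{r\geq 1\}$ pointing east, north, west, south (labelled so that the quadruple is positively oriented), except that to $M$ you attach a closed ``tentacle'': an arc starting at $(-1,0)$, passing through the gap between the tips of $K$ and $L$ near the origin, and ending at the point of radius $3$ at angle $\pi/4$. Then $K,L,M,N$ are still disjoint, closed, unbounded, mutually nonseparating, and positively oriented (the circular order is computed with arcs far from the tentacle, so it is unaffected). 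Let $S$ be the round circle of radius $3$. Then $S\setminus(K\cup L)$ has exactly one component running from $K$ to $L$ with the inherited orientation, namely the open first-quadrant arc, so its closure is the outermost arc $\gamma_{KL}$ of Lemma~\ref{lem:OutermostArcs}; but its interior contains the tip of the tentacle, a point of $M$. (The conclusion of the lemma still holds here: the outermost arc from $M$ to $N$ is the third-quadrant arc, disjoint from $\gamma_{KL}$.) Note also that your proposed derivation of the claim misuses Lemma~\ref{lem:OutermostArcs}: that lemma permits components of $S\cap C^+(\gamma_{KL})$ with both endpoints in $K$ or both in $L$, so ``tracing a sub-arc back to an endpoint in $K\cup L$'' is not by itself a contradiction. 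Everything after ``Granting the claim'' depends on the claim (e.g.\ the assertion that $M$ lies entirely in $C^-(\gamma_{KL})$ fails in the example above), so the gap is fatal rather than cosmetic.

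For contrast, the paper never tries to keep $M$ and $N$ off the arc $\gamma_{KL}$. It assumes the two outermost arcs meet; since both are sub-arcs of the embedded circle $S$, they must then overlap, so that (after relabelling) an endpoint $\gamma(x)\in M$ of the $M$-to-$N$ arc lies in the interior of the $K$-to-$L$ arc and vice versa. The contradiction is then extracted directly from the circular order of the $4$-ad: the unbounded end of $M$ lies on the negative side of the $K$-to-$L$ arc because $\langle K,M,L\rangle=-1$, and following the concatenated arc out to $N$ shows it would also lie on the negative side of an arc from $K$ to $N$, contradicting $\langle K,M,N\rangle=+1$. If you want to salvage your write-up, the viable move is to replace the disjointness claim by this kind of order-theoretic argument applied to the overlap configuration itself.
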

\begin{proof}
Suppose otherwise. Without loss of generality we have an arc $\gamma: [0, 1] \to P$ such that $\gamma([0, y])$ is an arc from $K$ to $L$ and $\gamma([x, 1])$ is an arc from $M$ to $N$ with $x < y$. Let $\mu \in \cE(M)$. Let $U$ be a connected neighborhood of $M$ that is disjoint from $K$, $L$, $N$, and $\gamma([y, 1])$. Then since $\langle K, M, L \rangle  = -1$ we see that $M' = \mu(\gamma)$ is on the negative side of $\gamma([0, y])$ in $(K \cup L)^c)$ and we can find an arc $c$ from the negative side of $\gamma([0, y])$ to $M'$ that avoids $K$, $L$, and $\gamma([y, 1])$. But $c$ certifies that $M$ is on the negative side of $\gamma([0,1])$ in $(K \cup N)^c$ which contradicts $\langle K, M, N \rangle  = +1$.
\end{proof}

\section{Universal circles and the end compactification} \label{sec:UCandEC}
In this section we show that a generalized unbounded decomposition of the plane gives rise to a natural compactification of the plane as a closed disc. Applying this to the special case of the orbit space $P$ of a quasigeodesic flow and the decompositions $\cD^\pm$ or $\cD$ completes the proofs of the Compactification Theorem and the results of Section~\ref{sec:ClosedOrbits}.

\subsection{Universal circles for unbounded decompositions}\label{sec:UniversalCircles}
We will now show that if $\cD$ is an uncountable, eventually disjoint collection of closed sets in the plane then there is a natural space $S_u(\cD)$ constructed from $\cE(\cD)$ that is homeomorphic to $S^1$.

\begin{lemma} \label{lem:Separable}
Let $\cD$ be an eventually disjoint collection of unbounded continua in the plane. Then the set of ends $\cE(\cD)$ with the canonical circular order is separable in the order topology.
\end{lemma}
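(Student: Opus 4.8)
The plan is to produce an explicitly countable subset of $\cE(\cD)$ that is dense in the order topology. First I would fix a countable base $\{B_n\}_{n=1}^\infty$ of bounded open discs for the plane $P$ (these exist since $P$ is second countable), and more conveniently a nested bounded exhaustion $(D_i)_{i=1}^\infty$ of $P$. The key observation is that for each $i$, only finitely many decomposition elements of $\cD$ have an end ``visible at scale $D_i$'' in a nontrivial way — but this needs to be made precise, because a priori uncountably many $K \in \cD$ could pass through a neighborhood of $D_i$. The right finiteness statement to extract from eventual disjointness is the following: given a fixed $K_0 \in \cD$ and a bounded set $D$, there are only finitely many components of $(\bigcup_{K \in \cD} K) \setminus D$; equivalently, the components of $K^c$ (for $K$ ranging over $\cD$) that meet $D$ are controlled. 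Rather than that, I would argue directly with the complementary regions.

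The cleanest route: pick one reference element $K_0 \in \cD$. For each bounded disc $D$ from the countable family and each component $C$ of $K_0 \setminus D$, record the (at most countably many) components of $P \setminus K_0$ and inside each such component repeat. More robustly, I would instead build the dense set as follows. For each $i$ and each component $C$ of $P \setminus D_i$, note $C$ is an open connected set; the set of ends $\kappa \in \cE(\cD)$ with $\kappa(D_i) \subset C$ need not be finite, but among them I choose (using the axiom of choice, once per $(i,C)$ pair, of which there are countably many since $P \setminus D_i$ has countably many components) one representative end $\kappa_{i,C}$ whenever that set is nonempty. Let $\cE_0$ be the countable collection of all these $\kappa_{i,C}$. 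I then claim $\cE_0$ is dense: given any nonempty open interval $(\lambda,\mu)$ in $\cE(\cD)$, pick $\nu \in (\lambda,\mu)$, choose a disc $D_i$ distinguishing $\lambda,\nu,\mu$, and let $C = \nu(D_i)$'s containing component of $P\setminus D_i$; then $\kappa_{i,C}$ has its $D_i$-component inside $C$, and using Proposition~\ref{prop:DetectSeparation} (that separation in $P$ is detected by separation of ends) together with Lemma~\ref{lem:CanSeparate}, one checks $\kappa_{i,C}$ still lies in $(\lambda,\mu)$, since $\lambda,\mu$ both have $D_i$-components in other components of $P \setminus D_i$ and hence ``guard'' the interval.

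The main obstacle I anticipate is precisely the verification that $\kappa_{i,C} \in (\lambda,\mu)$ — i.e. that choosing the representative by its coarse location relative to $D_i$ is enough to pin down its position in the circular order between $\lambda$ and $\mu$. This requires knowing that $\lambda$ and $\mu$, viewed through the disc $D_i$, have components $\lambda(D_i),\mu(D_i)$ that lie in components of $P\setminus D_i$ \emph{disjoint} from $C$, so that everything with $D_i$-component in $C$ is trapped on one side; ensuring this forces $D_i$ to be chosen not merely to distinguish $\lambda,\nu,\mu$ but large enough that the three relevant complementary regions of $P\setminus D_i$ are genuinely separate, which is where Lemma~\ref{lem:ComplementaryComponents} and the definition of ``distinguishing disc'' do the work. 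Once that bookkeeping is in place the density argument closes, and second countability (Proposition~\ref{prop:2ndCountable}) would follow as soon as one also knows there are countably many gaps — but for this lemma separability alone is the target, so I would stop there.
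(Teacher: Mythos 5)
There is a genuine gap, and it sits exactly where you flagged your ``main obstacle.'' Your countable set is indexed by pairs $(i,C)$ with $C$ a connected component of $P\setminus D_i$. But for a bounded set $D_i$, the complement $P\setminus D_i$ has exactly \emph{one} unbounded component, and every end $\kappa\in\cE(\cD)$ has $\kappa(D_i)$ unbounded, hence contained in that single component. So your scheme selects essentially one representative end per $i$ --- countable, yes, but with no hope of being dense. The repair you propose, choosing $D_i$ ``large enough that the three relevant complementary regions of $P\setminus D_i$ are genuinely separate,'' cannot work: no bounded set separates three unbounded continua into distinct components of its complement. The ``regions'' that actually trap an end between $\lambda$ and $\mu$ are not components of $P\setminus D_i$ but components of the complement of $\lambda(D)\cup\mu(D)\cup\gamma$ for an arc $\gamma$ joining $\lambda(D)$ to $\mu(D)$ (the sets $C^{\pm}(\gamma)$ of Lemma~\ref{lem:SeparatingArc}), and there are uncountably many ends to distinguish at each scale, so no choice-of-one-per-region at the level of $P\setminus D_i$ can see them all.

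The missing idea is to exploit separability of the \emph{plane as a metric space}, not countability of complementary components. For each $i$ let $P^i$ be the union over $K\in\cD$ of the unbounded components of $K\setminus D_i$, and choose a countable set $\{p^i_j\}_j$ dense in $P^i$; to each $p^i_j$ associate an end whose $D_i$-component contains $p^i_j$. Given a nonempty interval $(\kappa,\mu)$ witnessed by some $\lambda$, the set $U=C^{+}(\gamma;\kappa(D),\mu(D))$ is open in $P$ and contains $\lambda(D_i)$ for large $i$, so $U\cap P^i$ is nonempty and relatively open and therefore contains some $p^i_j$; the associated end then lies in $(\kappa,\mu)$ because its $D_i$-component is trapped in $U$. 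This is the step your indexing cannot reproduce.
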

\begin{proof}
Let $\{D_i\}_{i = 1}^\infty$ be an exhaustion of the plane by nested bounded open discs. For each $i$, consider
\[ P^i := \cup_{K \in \cD} K_{D_i} \]
where $K_{D_i}$ is the union of the unbounded components of $K \setminus D_i$. Since a subspace of a separable metric space is separable we can choose for each $i$ a countable set $\{p^i_j\}_{j = 1}^\infty$ that is dense in $P^i$. Now for each $i, j$ let $K^i_j \in \cD^+$ be the decomposition element containing $p^i_j$ and choose an end $\kappa^i_j$ such that $\kappa^i_j(D_i)$ is the component of $K^i_j \setminus D_i$ that contains $p^i_j$. Now let
\[ \cE' = \{ \kappa^i_j \}_{i, j}. \]

We will show that $\cE'$ is dense in $\cE$. Indeed, let $\kappa, \mu \in \cE(\cD)$ and assume that $(\kappa, \mu)$ contains at least one end, $\lambda$. Choose $D$ large enough to distinguish $\kappa$, $\lambda$, and $\mu$ and let
\[ K = \kappa(D), \]
\[ M = \mu(D). \]

Let $\gamma$ be an arc from $K$ to $M$ and choose $i$ large enough so that $D_i$ contains $D$ and $\gamma$. Set $U := C^+(\gamma; K, M)$.
Now $U \cap K_{D_i}$ is an open subset of $D_i$ and it is nonempty because it contains $\lambda(D_i)$. So $p^i_j$ is contained in $U$ for some $j$. The corresponding end $\kappa^i_j$ is contained in $(\kappa, \mu)$ since $\kappa^i_j(D_i)$ is contained in $U$.
\end{proof}

Recall that a \emph{gap} in a circularly ordered set $S$ is an ordered pair of distinct elements $x, y \in S$ such that the open interval $(x, y) \subset S$ is empty.

\begin{lemma} \label{lem:CountablyManyGaps}
Let $\cD$ be an eventually disjoint collection of unbounded continua in the plane. Then the set of ends $\cE(\cD)$ has at most countably many gaps.
\end{lemma}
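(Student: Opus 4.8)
The plan is to produce, for each gap $(x,y)$ in $\cE(\cD)$, a concrete ``certificate'' living in the plane, and then argue that only countably many such certificates can exist because the plane is second countable.

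First I would fix a bounded exhaustion $(D_i)_{i=1}^\infty$ of $P$ by nested open discs. Given a gap $(x,y)$, with $x = \kappa \in \cE(K)$ and $y = \lambda \in \cE(L)$ for decomposition elements $K, L \in \cD$ (possibly $K = L$), choose $i$ large enough that $D_i$ distinguishes $\kappa$ and $\lambda$, and let $K' = \kappa(D_i)$, $L' = \lambda(D_i)$, which are unbounded components of $K \setminus D_i$ and $L \setminus D_i$ respectively. Since $(\kappa,\lambda)$ is empty, by Proposition~\ref{prop:DetectSeparation} and Lemma~\ref{lem:AvoidBoundedSet} we can pick an arc $\gamma$ from $K'$ to $L'$ that avoids every other decomposition element of $\cD$ (being more careful: the pair $\kappa,\lambda$ having nothing strictly between them means no end of any $M \in \cD$ lies in the open interval, so $\bigcup_{M} M$ restricted appropriately does not separate $K'$ from $L'$ in the relevant finite sub-$n$-ad, and the arc can be produced). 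The region $C^+(\gamma; K', L')$ on the positive side of $\gamma$ then has \emph{no ends}: any end of that open region would, by Lemma~\ref{lem:RayApproachingEnd}, give a proper ray, hence an unbounded continuum inside it, which (since $\cD$ covers $P$) would meet some $M \in \cD$ in an unbounded set and hence produce an end of some decomposition element strictly between $\kappa$ and $\lambda$ — contradicting emptiness of the gap. So to each gap I attach the pair $(i, U)$ where $U = C^+(\gamma; K', L')$, an open subset of $P$ with no ends.

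The key reduction is that distinct gaps yield, for a common large index $i$, disjoint such regions (or regions that eventually separate), so the map from gaps into $\{\text{basic open sets of } P\}$ — pick a rational point, i.e. a point of a fixed countable dense set of $P$, lying in the no-ended region $U$ near $D_i$ — is at-most-countable-to-one, in fact I will arrange it to be injective after choosing the point in a canonical way. Concretely: second countability of the plane gives a countable base; for each gap the associated $U$ is nonempty open, so it contains a base element, and I record the least-indexed base element contained in $U$ and disjoint from $D_i$-pieces of $K$ and $L$; different gaps in the circular order have disjoint positive regions $C^+$ once we pass to a large enough common exhaustion stage, so they cannot share the same recorded base element. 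Since there are only countably many pairs (index $i$, base element), there are only countably many gaps.

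The main obstacle I expect is the bookkeeping needed to make the assignment gap $\mapsto$ (open region) genuinely injective, or at least uniformly finite-to-one: two nested or adjacent gaps could, a priori, have positive regions that overlap at small scales, and one must use the circular order together with Lemma~\ref{lem:ArcSides} (the inner/outer arc dichotomy) to see that after enlarging $D_i$ the regions become honestly disjoint. A secondary subtlety is handling the case $K = L$ — a gap between two ends of the \emph{same} continuum — where the separating arc $\gamma$ from $K'$ back to $K'$ plays the role of $\gamma$ above, and Lemma~\ref{lem:SeparatingArc} still guarantees both sides are unbounded; here the ``no ends'' conclusion for the chosen side again comes from the covering property of $\cD$. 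Once injectivity (up to countable ambiguity) is secured, countability of the base of $P$ finishes the argument immediately.
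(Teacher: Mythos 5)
Your high-level strategy---attach to each gap a planar certificate, show distinct gaps get disjoint certificates, and conclude countability from separability of the plane---is the same as the paper's, but the step you yourself flag as ``the main obstacle'' is precisely the mathematical content of the lemma, and you have not supplied it. There are two concrete problems. First, your certificate $U = C^+(\gamma; K', L')$ depends on the choice of the arc $\gamma$, which is not canonical: by Lemma~\ref{lem:ArcSides} two disjoint choices $\gamma_\pm$ give regions differing by the middle strip $C^-(\gamma_+)\cap C^+(\gamma_-)$, which can be nonempty and even unbounded (Figure~\ref{fig:UnboundedMiddle}). So ``record the least-indexed base element contained in $U$'' is not yet a well-defined function of the gap. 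Second, and more seriously, the disjointness of the regions attached to distinct gaps is asserted rather than proved, and it is genuinely delicate: for adjacent gaps $(\kappa,\lambda)$ and $(\lambda,\mu)$ the two regions share the boundary continuum $\lambda(D_i)$, which may be as pathological as the comb of Example~\ref{ex:NoEnds}, and nothing you have written rules out the two regions interleaving along it. There is also a bookkeeping failure: you prove (or rather claim) disjointness only ``once we pass to a large enough common exhaustion stage,'' but that stage depends on the pair of gaps; two gaps both first distinguished at stage $i$ must already have disjoint certificates \emph{tagged with the index $i$}, or else the map from gaps to pairs (index, base element) need not be injective. The paper resolves all of this at once by replacing the region with the unique \emph{outermost} sub-arc of a fixed circle $\partial D_k$ running from $\kappa(D_n)$ to $\lambda(D_n)$ (Lemma~\ref{lem:OutermostArcs}, which supplies canonicity) and by proving that the outermost arcs attached to an unlinked, positively oriented quadruple on the same circle are disjoint (Lemma~\ref{lem:OutermostArcsDisjoint}); distinct gaps then yield disjoint open sub-intervals of a countable family of circles, and countability is immediate. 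To complete your version you would need to prove an analogue of Lemma~\ref{lem:OutermostArcsDisjoint} for your regions together with a canonical choice of $\gamma$---at which point you would essentially be reproving the paper's two arc lemmas.

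A smaller but real gap: your justification of the ``no ends'' claim for $C^+(\gamma; K', L')$---that a proper ray inside it would have to meet some single element of $\cD$ in an unbounded set---does not follow. The ray could be covered by infinitely many decomposition elements, each meeting it in a bounded set, and an unbounded open set need not have any ends at all (Example~\ref{ex:NoEnds}). Fortunately this claim plays no role in your counting, but as written it is a second unsupported step.
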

\begin{proof}
Let $\{D_i\}_{i=1}^\infty$ be an exhaustion of $P$ by bounded open discs and let $S_i^j = \partial D_i$ for all $i > j$ ($j$ is a dummy variable, i.e. we want to keep track of all of the circles outside of $j$ for each $j$). If $(\kappa, \lambda)$ form a gap then let $n$ be the first integer such that $\kappa(D_n) \neq \lambda(D_n)$ and let $k$ be the first integer such that $S_k^n$ intersects both $\kappa(D_n)$ and $\lambda(D_n)$. We can associate to the gap $(\kappa, \lambda)$ the open interval $U_{\kappa, \lambda} \subset S_k^n$ whose closure is the outermost arc from $\kappa(D_n)$ to $\lambda(D_n)$ (see Lemma~\ref{lem:OutermostArcs}), and distinct gaps correspond to disjoint open intervals in $\{S_j^i\}_{0 < i < j}$ (see Lemma~\ref{lem:OutermostArcsDisjoint}). There can only be finitely many such open intervals and thus only countably many gaps.
\end{proof}

\begin{construction}
Let $\cD$ be a generalized unbounded decomposition of the plane $P$. Then the corresponding set of ends $\cE = \cE(\cD)$ is an uncountable circularly ordered set with countably many gaps that is separable in the order topology. We can construct a \emph{universal circle} $S_u(\cD) = \widehat{\overline{\cE}}$ as in Construction~\ref{con:UCforCO}. Note that any homeomorphism of $P$ preserving the decomposition $\cD$ induces a natural homeomorphism on $S_u$.
\end{construction}

\begin{example} \label{ex:NoRightmostEnd}
It is appealing to think that the image of the set of ends of a decomposition element in the universal circle is closed but this is not generally true. For example, the set in Figure~\ref{fig:NoRightmostEnd} has no rightmost end.

\begin{figure}[h]
	\centering
		\includegraphics{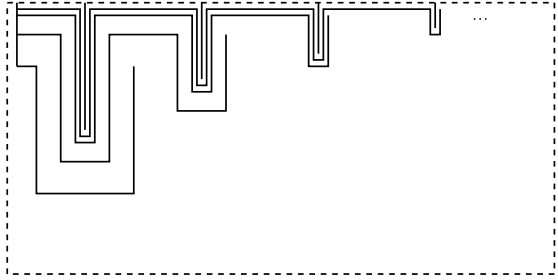}
	\caption{An unbounded continuum with no rightmost end.}\label{fig:NoRightmostEnd}
\end{figure}

\end{example}

These results fill a gap in the literature. See \cite{Calegari} and compare with with \cite{Thurston}, Section 2.1.3. Note that not all circularly ordered sets admit a universal circle constructed this way. For example, take $S^1 \times \{0, 1\}$ where each $S^1 \times \{i\}$ has the usual circular order and $(s, 1)$ is immediately counterclockwise to $(s, 0)$ for all $s \in S^1$. This has uncountably many gaps and is not 2nd countable.

\subsection{The end compactification} \label{sec:EndCompactification}
Throughout this section $\cD$ will be an unbounded generalized decomposition of the plane $P$. Endow the set of ends $\cE := \cE(\cD)$ with the canonical circular order and let $S := S_u(\cD)$ be the corresponding universal circle. If $A \subset \cE$ we will denote by $\widehat{A}$ the corresponding set in $S$.

\begin{definition}
A set $K \subset P$ is said to be \emph{subordinate} to the decomposition $\cD$ if $K = \kappa(A)$ for some end $\kappa \in \cE(\cD)$ and bounded open set $A \subset P$.
\end{definition}

If $K = \kappa(A)$ is subordinate to $\cD$ and $K' \in \cD$ is the decomposition element containing $K$ then we can identify $\cE(K)$ with the subset of $\cE(K')$ consisting of ends $\kappa'$ such that $\kappa'(A) = K$. We will thus write $\cE(K) \subset \cE(\cD)$.

\begin{construction}
We will construct a space $\overline{P}$, called the \emph{end compactification of $P$ with respect to $\cD$}. If there is a possibility of confusion about which decomposition we are working with then we will write $\overline{P_\cD}$.

As a set, $\overline{P} := P \cup S$.

Suppose that $K$ and $L$ are disjoint sets subordinate to $\cD$ and $\gamma$ is an arc from $K$ to $L$. Let $I$ be the maximal open interval in $S$ running from $\widehat{\cE(K)}$ to $\widehat{\cE(L)}$ with positive orientation. The set
\[ O(K, L, \gamma) := I \cup C^+(\gamma; K, L) \]
is called the \emph{peripheral set determined by $K$, $L$, and $\gamma$}.

The topology on $\overline{P}$ is the coarsest topology containing all open sets in $P$ and all peripheral sets.
\end{construction}

Note that it is possible that $O(K, L, \gamma) \cap S = \emptyset$, i.e. that the interval $I$ above may be empty. For example, take $K$ to be the set in Example~\ref{ex:NoRightmostEnd} and $L$ its mirror image.

In the following discussion we will often need to find subordinate elements with ends in a specified set. We'll start with some observations in this direction that will be used casually in the sequel.

If $(a, b) \subset S$ is any open interval then we can find a set $L$ subordinate to $\cD$ such that $\cE(L) \subset (a, b)$. Indeed, since $\cE(\cD)$ has dense image in $S$ we can find ends $\kappa, \lambda, \mu \in \cE(\cD)$ with image in $(a, b)$ such that $\langle \kappa, \lambda, \mu \rangle  = +1$. Let $D$ be a bounded open disc that distinguishes these three ends. Then by Lemma~\ref{lem:CanSeparate}, $L := \lambda(D)$ satisfies our requirement. In addition, by choosing $D$ sufficiently large we can ensure that $L$ is disjoint from any specified bounded set.

In addition, if $K_1, ... K_n$ are subordinate sets with ends outside of $(a, b)$ then we can choose $L$ disjoint from $\bigcup K_i$. Indeed, if $K_i = \kappa_i(D_i)$ then choose $D$ to contain $\bigcup_i D_i$.

\begin{proposition}
Let $\cD$ be a generalized unbounded decomposition of the plane $P$ and let $\overline{P}$ be the end-compactification of $P$ with respect to $\cD$. Then
\begin{enumerate}
 \item $\overline{P}$ has a basis consisting of open sets in $P$ and peripheral sets,
 \item $\overline{P}$ is 1st countable, and
 \item the inclusion maps $P \hookrightarrow \overline{P}$ and $S_u \hookrightarrow \overline{P}$ are homeomorphisms.
\end{enumerate}
\end{proposition}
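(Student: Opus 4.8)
The plan is to establish the three assertions in turn; assertion~(1) carries the weight, and (2) and (3) follow from it together with the observations recorded just above. Throughout, write a peripheral set as $O(K,L,\gamma)=I(K,L)\cup C^+(\gamma;K,L)$, where $I(K,L)$ denotes the maximal open interval of $S$ running positively from $\widehat{\cE(K)}$ to $\widehat{\cE(L)}$, and recall that $O(K,L,\gamma)\cap P=C^+(\gamma;K,L)$ (open in $P$) while $O(K,L,\gamma)\cap S=I(K,L)$.

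For~(1): since the open subsets of $P$ and the peripheral sets generate the topology on $\overline P$, it suffices to check that this family covers $\overline P$ and that every point in the intersection of two of its members lies in a member contained in that intersection. Covering is clear on $P$; for $s\in S$ pick, via the observations, disjoint subordinate $K,L$ with $\widehat{\cE(K)}$ and $\widehat{\cE(L)}$ in small intervals strictly on the counterclockwise and clockwise sides of $s$, and any arc between them, so $s\in I(K,L)$. For the intersection condition, every case except two peripheral sets meeting at a point of $S$ reduces immediately to an intersection of open subsets of $P$, because a peripheral set meets $P$ in an open subset of $P$. So take $O_i=O(K_i,L_i,\gamma_i)$, $i=1,2$, and $s\in I(K_1,L_1)\cap I(K_2,L_2)$. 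Choose an order interval $(a,b)\ni s$ inside this intersection; since $I(K_i,L_i)$ is the positive gap of $\widehat{\cE(K_i)}\cup\widehat{\cE(L_i)}$, the sets $K_i,L_i$ have no ends in $(a,b)$, so the observations yield disjoint subordinate $K,L$ with $\cE(K)$ just counterclockwise of $s$ in $(a,b)$ and $\cE(L)$ just clockwise, both disjoint from $K_i\cup L_i\cup\gamma_i$. By Proposition~\ref{prop:DetectSeparation}, $\cE(K)$ and $\cE(L)$ do not link, $I(K,L)\subseteq(a,b)$ contains $s$, and a short argument with the same proposition and the definition of the $I(K_i,L_i)$ gives $K,L\subseteq C^+(\gamma_1;K_1,L_1)\cap C^+(\gamma_2;K_2,L_2)$. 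It remains to choose the connecting arc $\gamma$ from $K$ to $L$ with $C^+(\gamma;K,L)\subseteq C^+(\gamma_1;K_1,L_1)\cap C^+(\gamma_2;K_2,L_2)$, which I would do by routing $\gamma$ inside the component of this open set meeting $K$ and $L$, as close to $s$ as possible, using Lemma~\ref{lem:SeparatingArc} and Lemma~\ref{lem:ArcSides} to fix which side of $\gamma$ is positive. Pinning down this arc — making sure its positive side lands inside the given intersection — is the main obstacle of the whole proposition; everything else is bookkeeping with the planar-topology lemmas of the previous section.

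Assertion~(3) is then formal. $P$ lies in the generating family, hence is open in $\overline P$, and by~(1) every point of $P$ has a basis of neighbourhoods open in $P$, so $P\hookrightarrow\overline P$ is an open embedding recovering the plane topology. For $S$: by~(1) a basic open subset of $\overline P$ meets $S$ in $\varnothing$ or in an interval $I(K,L)$, so the subspace topology on $S$ is coarser than the order topology; conversely, for $s\in S$ and an order interval $(a,b)\ni s$ the covering construction of~(1) gives $K,L,\gamma$ with $s\in O(K,L,\gamma)\cap S=I(K,L)\subseteq(a,b)$. Hence the two topologies on $S$ agree and $S=S_u\hookrightarrow\overline P$ is an embedding.

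Finally~(2). At points of $P$ this follows from~(3), $P$ being open with its usual first countable topology. At $s\in S$, by~(1) a neighbourhood basis is given by the peripheral sets containing $s$, so I must exhibit a countable cofinal subfamily. Fix a countable neighbourhood basis $(a_n,b_n)\downarrow\{s\}$ of $s$ in $S$ and an exhaustion $(D_n)$ of $P$ by bounded open discs. For each $n$ use the observations to choose disjoint subordinate $K_n,L_n$ with $\cE(K_n)$ just counterclockwise of $s$ inside $(a_n,b_n)$ and $\cE(L_n)$ just clockwise, both disjoint from $\overline{D_n}$, and use Lemma~\ref{lem:AvoidBoundedSet} to choose an arc $\gamma_n$ from $K_n$ to $L_n$ disjoint from $\overline{D_n}$ and from finitely many auxiliary subordinate sets with ends outside $(a_n,b_n)$, so that $\gamma_n$ cannot wander far from $s$. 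Then each $O(K_n,L_n,\gamma_n)$ is a peripheral neighbourhood of $s$ with $S$-part inside $(a_n,b_n)$, and I claim $\{O(K_n,L_n,\gamma_n)\}_n$ is cofinal: given a peripheral $O(K,L,\gamma)\ni s$, for $n$ large its $S$-part contains $(a_n,b_n)$, and the separation reasoning of~(1) shows $C^+(\gamma;K,L)$ then contains $K_n\cup L_n\cup\gamma_n$, hence all of $C^+(\gamma_n;K_n,L_n)$. Making this last inclusion precise — that the $P$-parts of the family are genuinely cofinal — is the delicate step of~(2), of the same nature as the arc-choice step of~(1).
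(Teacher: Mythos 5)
Your overall strategy is the same as the paper's: reduce everything to producing, around each point $s \in S$, peripheral neighbourhoods that are cofinal among all peripheral sets containing $s$, with the subordinate sets supplied by the density observations and the arcs by Lemma~\ref{lem:AvoidBoundedSet}. (The paper is slightly more economical: at each $s$ it builds a single nested sequence $U_i = O(K_i, L_i, \gamma_i)$ with $U_i \subset U_{i-1}$ and $U_i \cap D_i = \emptyset$, and this one sequence simultaneously settles the basis condition in (1) and first countability in (2); your separate treatments of the intersection condition and of cofinality duplicate the same work.)

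However, the step you twice flag as ``the main obstacle'' and ``the delicate step'' --- arranging that $C^+(\gamma;K,L)$ actually lands inside the given peripheral set(s) --- is left as a plan (``route $\gamma$ as close to $s$ as possible'') rather than an argument, and this is a genuine gap: it is the only nontrivial content of the proposition. The mechanism that closes it is a specific use of Lemma~\ref{lem:AvoidBoundedSet}. Having chosen $K$ and $L$ disjoint from $M := K_1 \cup \gamma_1 \cup L_1$ with their ends inside $I(K_1,L_1)$, regard $M$ as a single unbounded continuum; its ends do not separate $\cE(K)$ from $\cE(L)$, so the lemma produces an arc $\gamma$ from $K$ to $L$ avoiding $M$ and any prescribed compact set. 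Then $K \cup \gamma \cup L$ is connected and disjoint from $M$, so it lies in one complementary component of $M$, and the definition of the circular order (via $\langle \cE(K_1), \cE(K), \cE(L_1)\rangle = +1$) forces that component to be $C^+(\gamma_1;K_1,L_1)$. In the other direction, $M$ is connected, disjoint from $K\cup\gamma\cup L$, and its ends lie outside $I(K,L)$, so by Proposition~\ref{prop:DetectSeparation} and the definition of the order it misses $C^+(\gamma;K,L)$; the latter is then a connected subset of $P\setminus M$ meeting $C^+(\gamma_1;K_1,L_1)$ near $\gamma$, hence contained in it. Running this simultaneously against $M_1$ and $M_2$ (which Lemma~\ref{lem:AvoidBoundedSet} allows) handles the intersection of two peripheral sets, and the same inclusion is exactly what your part (2) needs. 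With that supplied, your (2) and (3) go through as written.
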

\begin{proof}
Given $p \in S \subset \overline{P}$ we will construct a sequence of peripheral sets $U_i$ that is eventually contained in any peripheral set containing $p$. 

Fix an exhaustion of the plane by bounded sets $D_i$ and a sequence of open intervals $(a_i, b_i)$ in $S$ such that $[a_{i+1}, b_{i+1}] \subset (a_i, b_i)$ for all $i$ and $\bigcap_i (a_i, b_i) = p$. Let $K_0$ and $L_0$ be subordinate sets with ends in $(a_0, a_{1})$ and $(b_{1}, b_0)$ respectively and let $\gamma_0$ be an arc from $K_0$ to $L_0$. Set 
\[ U_0 = O(K_0, L_0, \gamma_0). \]

For each $i$ choose subordinate sets $K_i$ and $L_i$ with ends in $(a_i, a_{i+1})$ and $(b_{i+1}, b_i)$ respectively that are disjoint from $D_i$, $K_{i-1}$, $L_{i-1}$, and $\gamma_{i-1}$. By Lemma~\ref{lem:AvoidBoundedSet} we can find an arc $\gamma_i$ from $K_i$ to $L_i$ that is contained in $U_{i-1}$. Setting
\[ U_i = O(K_i, L_i, \gamma_i) \]
we have $U_i \subset U_{i-1}$ and $U_i \cap D_{i} = \emptyset$.

Now suppose that $U$ is a peripheral set containing $p$. Then the $U_i$ are eventually contained in $U$. Indeed, $U = O(\kappa(D'), \lambda(D''), \gamma)$ for some $\kappa$, $\lambda$, $D'$, $D''$, and $\gamma$. Let $D$ be a compact disc containing $D', D'',$ and $\gamma$. Choose $I$ large enough so that $(a_I, b_I) \subset U \cap V$ and $D_I \supset D$. Then $U_i \subset U$ for all $i > I$.

To prove (1) it suffices to show that if $U$ and $V$ are either peripheral or open in $P$ and $p \subset U \cap V$ then there is a set $W$ that is either peripheral or open in $P$ such that $p \in W \subset U \cap V$. If $p \in P$ then this is obvious, so assume that $p \in S$ and $U$ and $V$ are both peripheral. Simply choose $i$ large enough so that $U_i \subset U \cap V$.

Now that we know (1), the construction above yields a countable basis about any point in $S$ and (2) follows.

For (3), it is clear that the inclusion $P \hookrightarrow \overline{P}$ is a homeomorphism.

Let $i:S_u \hookrightarrow \overline{P}$. It is clear that the preimage of an open set under $i$ is open. On the other hand, suppose that $U \in S_u$ is open and let $p \in U$. Choose points $a, b, c, d \in S_u$ such that $a, b, p, c, d$ is positively oriented and let $K$ and $L$ be subordinate sets whose ends lie in $(a, b)$ and $(c, d)$ respectively. Let $\gamma$ be an arc from $K$ to $L$. Then $O(K, L, \gamma) \cap \partial \overline{P}$ is contained in $i(U)$ so $i(U)$ is open.
\end{proof}

We can now make sense of the term ``end compactification.''

\begin{proposition}
Let $\cD$ be a generalized unbounded decomposition of the plane $P$. Then the end compactification $\overline{P}$ with respect to $\cD$ is compact.
\end{proposition}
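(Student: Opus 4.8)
The plan is to show that every net (or, using first countability established in the previous proposition, every sequence) in $\overline{P}$ has a convergent subnet, or equivalently to exhibit a finite subcover of an arbitrary open cover. Since $\overline{P}$ is first countable but possibly not metrizable a priori, I would argue by sequential compactness is not immediately enough, so instead I would work directly with open covers, reducing to covers by basic open sets (open sets of $P$ and peripheral sets) using part (1) of the preceding proposition.

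\begin{proof}
It suffices to show that every cover of $\overline{P}$ by basic open sets has a finite subcover. So let $\mathcal{V}$ be such a cover, consisting of open subsets of $P$ together with peripheral sets. First, the subfamily of peripheral sets in $\mathcal{V}$ must already cover the universal circle $S = \partial \overline{P}$. For each $p \in S$, pick a peripheral set $V_p = O(K_p, L_p, \gamma_p) \in \mathcal{V}$ containing $p$; by definition $V_p \cap S$ is an open interval $I_p$ containing $p$, and since $\cE(\cD)$ is dense in $S$ we may shrink to assume the closed interval $\overline{I_p'}$ of a slightly smaller peripheral set still contains $p$ in its interior. The intervals $\{I_p\}$ form an open cover of the compact circle $S$, so finitely many, say $I_{p_1}, \dots, I_{p_n}$, cover $S$; then $V_{p_1}, \dots, V_{p_n}$ cover a neighborhood of $S$ in $\overline{P}$, namely $W := \bigcup_j V_{p_j} \supset S$, and $W$ is open in $\overline{P}$ since each $V_{p_j}$ is.

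Now $\overline{P} \setminus W$ is a closed subset of $\overline{P}$ contained in $P$. The key claim is that $\overline{P} \setminus W$ is compact, i.e.\ bounded in $P$. Indeed, $\overline{P} \setminus \bigcup_j V_{p_j}$ has the property that it contains none of the ends $\kappa \in \cE(\cD)$ appearing as accumulation points in the circle; more concretely, suppose $\overline{P}\setminus W$ were unbounded. It would then contain an unbounded continuum, or at least an unbounded closed connected piece, and such a set has an end $\kappa \in \cE(\cD)$; the image $\widehat{\kappa}$ lies in some $I_{p_j}$, and then for a sufficiently large bounded set $D$ the tail $\kappa(D)$ of that continuum lies in the disc $C^+(\gamma_{p_j}; K_{p_j}, L_{p_j}) \subset V_{p_j}$, contradicting that it lies outside $W$. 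Hence $\overline{P}\setminus W$ is a bounded closed subset of $P$, therefore compact. It is covered by the open-in-$P$ members of $\mathcal{V}$, so finitely many of them, together with $V_{p_1},\dots,V_{p_n}$, cover all of $\overline{P}$.
\end{proof}

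I expect the main obstacle to be the boundedness claim in the second paragraph, i.e.\ showing that the complement of the peripheral neighborhood $W$ of $S$ is bounded in $P$. The subtlety is that an unbounded closed subset of the plane need not have an end in $\cE(\cD)$ unless it contains an unbounded continuum (compare Example~\ref{ex:NoEnds}); so the argument must use that $\overline{P}\setminus W$, being closed and unbounded in the plane $P$, must nonetheless reach out to infinity along some decomposition element $K \in \cD$ — one needs $\cD$ to be a genuine covering of $P$ so that the unbounded set meets some $K\in\cD$ in an unbounded piece, whence an end. A careful version of this argument, perhaps phrased via the exhaustion $(D_i)$ and Lemma~\ref{lem:AvoidBoundedSet}, is where the real work lies; the finite-subcover bookkeeping around it is routine given the previous proposition.
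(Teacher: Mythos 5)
Your overall skeleton is the same as the paper's: cover $S$ by finitely many peripheral sets, show that what is left over is a compact subset of $P$, and finish with finitely many more open sets of $P$. The problem is the boundedness claim in your second paragraph --- which you correctly flag as the crux --- and neither of the two routes you sketch for extracting an end in $\cE(\cD)$ from an unbounded $\overline{P}\setminus W$ actually works. First, a closed unbounded subset of the plane need not contain any unbounded connected piece at all (consider $\{(n,0): n\in\bZ\}$), so ``it would then contain an unbounded continuum'' is unjustified; Example~\ref{ex:NoEnds} concerns a non-closed set, but closedness does not buy you connectivity. Second, even though $\cD$ covers $P$, an unbounded set can meet every single decomposition element in a bounded (even finite) set and still be unbounded, since $\cD$ is typically uncountable; so ``the unbounded set meets some $K\in\cD$ in an unbounded piece'' does not follow either. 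The underlying difficulty is that a point $x\in M\in\cD$ far from the origin need not lie in any tail $\mu(D)$ that the chosen peripheral sets absorb: it can sit in the ``waist'' $M\cap D$ for an arbitrarily large bounded set $D$, and there is no uniform choice of $D$ over the possibly infinitely many ends of the possibly uncountably many elements meeting $P\setminus W$. An end-by-end argument therefore cannot close the gap.

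The paper supplies the missing global ingredient geometrically rather than end-wise. After refining and reordering, the finitely many peripheral sets $O(K_i,L_i,\gamma_i)$ are arranged so that consecutive ones overlap in the circular order ($K_i, L_{i-1}, K_{i+1}, L_i$ positively ordered) and so that $\gamma_i$ meets $\gamma_{i+1}$ in a single transverse point; concatenating sub-arcs of the $\gamma_i$ then yields a simple closed curve $\gamma$ whose unbounded complementary component is covered by the $U_i$ and whose bounded side is compact. That compact set plays the role of your $\overline{P}\setminus W$. To repair your write-up you would need to prove the boundedness of $P\setminus W$ by some such curve construction; as it stands, the step is a genuine gap rather than routine bookkeeping.
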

\begin{proof}
Let $\cU$ be an open cover of $\widetilde{P_\cD}$. Since $S$ is compact we can find a finite subcollection $\{U_1, U_2, ..., U_n\} \subset \cU$ that covers $S$. After reordering and taking a refinement we can assume that each $U_i$ is a peripheral set $U_i = O(K_i, L_i, \gamma_i)$ such that $K_i, L_{i-1}, K_{i+1}, L_i$ is positively ordered for all $i$ (mod $n+1$). Further, we can assume that the arcs $\gamma_i$ do not intersect the $K_i$ and $L_i$ and that $\gamma_i$ intersects $\gamma_{i+1}$ only once, transversely, for all $i$. See Figure~\ref{fig:Compactness}.

\begin{figure}[h]
	\centering
	\labellist
	\small\hair 2pt
	\pinlabel $\overline{\orb}$ at 13 134
	\pinlabel $K_i$ [tr] at 98 39
	\pinlabel $L_{i-1}$ [bl] at 122 54
	\pinlabel $K_{i+1}$ [tl] at 116 111
	\pinlabel $L_i$ [r] at 94 129
	\pinlabel $\gamma_i$ [r] at 80 75
	\pinlabel $\gamma_{i-1}$ [br] at 62 47
	\pinlabel $\gamma_{i+1}$ [tr] at 68 91
	
	\pinlabel $...$ at 52 128
	\pinlabel $...$ at 65 19
	
	\endlabellist
		\includegraphics{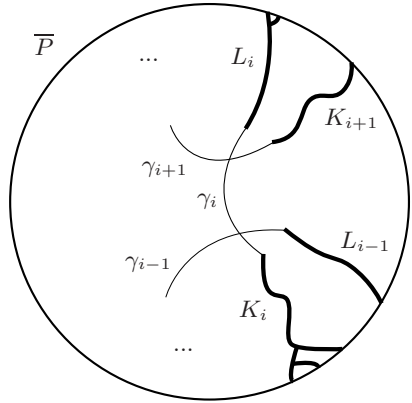}
	\caption{A refinement covering $S$.} \label{fig:Compactness}
\end{figure}

We can concatenate the sub-arcs between intersections of the $\gamma_i$ to form a simple closed curve $\gamma$. Points on the negative side of $\gamma$ are contained in at least one of the $U_i$ and the positive side of $\gamma$ is compact. So we can find a finite sub-cover of $\cU$ that covers this compact piece and the rest is covered by the $U_i$.
\end{proof}

We use the abbreviation $cl_X(Y)$ for the closure of $Y$ in $X$.
\begin{lemma}
Let $K$ be subordinate to the decomposition $\cD$. Then
\[ cl_{\overline{P}}(K) = K \cup cl_{S}(\widehat{\cE(K)}). \]
\end{lemma}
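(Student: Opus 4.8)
The plan is to prove the two inclusions separately, the harder one being $cl_{\overline{P}}(K) \subset K \cup cl_S(\widehat{\cE(K)})$.

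First I would dispatch the easy inclusion. Since $K \subset cl_{\overline{P}}(K)$ trivially, it remains to check that every point of $cl_S(\widehat{\cE(K)})$ lies in $cl_{\overline{P}}(K)$. It suffices to show $\widehat{\cE(K)} \subset cl_{\overline{P}}(K)$, as $cl_{\overline{P}}(K)$ is closed in $\overline{P}$ and hence contains the closure of any of its subsets (and $\widehat{\cE(K)}$, being in the subspace $S$, has its $S$-closure contained in its $\overline{P}$-closure). So fix $\kappa \in \cE(K)$ and let $p = \phi(\kappa) \in \widehat{\cE(K)}$; I must show every basic neighborhood of $p$ in $\overline{P}$ meets $K$. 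By the previous proposition it is enough to consider a peripheral set $U = O(\kappa_1(D'), \kappa_2(D''), \gamma)$ containing $p$. Then $p$ lies in the open interval $I$ of $U$, which runs from $\widehat{\cE(\kappa_1(D'))}$ to $\widehat{\cE(\kappa_2(D''))}$; in particular $\kappa$ is an end distinct from those represented near the two endpoints, and choosing a bounded disc $D \supset D' \cup D'' \cup \gamma$ distinguishing the relevant ends, the component $\kappa(D)$ of $K \setminus D$ is unbounded and — by the same separation/linking analysis used throughout Section~\ref{sec:CO}, in particular Proposition~\ref{prop:DetectSeparation} — must lie on the positive side $C^+(\gamma; \kappa_1(D'), \kappa_2(D''))$ of $\gamma$. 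Hence $\kappa(D) \subset U$, so $U \cap K \neq \emptyset$.

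For the reverse inclusion I would argue by contradiction: suppose $q \in cl_{\overline{P}}(K)$ but $q \notin K \cup cl_S(\widehat{\cE(K)})$. If $q \in P \setminus K$, then since $K$ is closed in $P$ and the inclusion $P \hookrightarrow \overline{P}$ is a homeomorphism onto an open subset, $q$ has a neighborhood in $\overline{P}$ disjoint from $K$, contradicting $q \in cl_{\overline{P}}(K)$. So $q \in S$ and $q \notin cl_S(\widehat{\cE(K)})$. Then $q$ lies in a maximal open interval $(a,b)$ of $S \setminus cl_S(\widehat{\cE(K)})$; since $\cE(\cD)$ is dense in $S$ and $K$ is a fixed decomposition element (or subordinate set), I can pick subordinate sets $K_a, K_b$ with ends in small subintervals near $a$ and near $b$ respectively, disjoint from a large bounded disc $D_0$ chosen so that $\kappa(D_0)$ is a well-defined unbounded component of $K \setminus D_0$ for each $\kappa \in \cE(K)$, and such that $K_a, K_b$ are disjoint from $K$ (possible because their ends avoid $\widehat{\cE(K)}$, so by Proposition~\ref{prop:DetectSeparation} they don't link $K$, and enlarging the disc in the construction pushes them off $K$). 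Let $\gamma$ be an arc from $K_a$ to $K_b$ avoiding $K$, which exists by Lemma~\ref{lem:AvoidBoundedSet} since $\widehat{\cE(K)}$ does not separate $\cE(K_a)$ from $\cE(K_b)$; orient it so that $q \in O(K_a, K_b, \gamma)$. The key point is then that $O(K_a, K_b, \gamma)$ is disjoint from $K$: its interval part lies in $(a,b)$, hence misses $\widehat{\cE(K)}$, and its planar part $C^+(\gamma; K_a, K_b)$ is disjoint from $K$ because $\gamma$ avoids $K$, $K$ is connected, and $K$ lies entirely on the negative side of $\gamma$ (as all of $\cE(K)$ is on one side of the arc, by the connectedness argument in the proof of Proposition~\ref{prop:DetectSeparation}). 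This contradicts $q \in cl_{\overline{P}}(K)$.

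The main obstacle I expect is the careful bookkeeping in the reverse inclusion needed to produce the arc $\gamma$ and the peripheral set that simultaneously avoids all of $K$ — specifically, verifying that the planar side $C^+(\gamma; K_a, K_b)$ really contains no point of $K$. This requires knowing that a connected unbounded set with all its ends on one side of a separating arc cannot poke across to the other side, which is exactly the kind of statement established in the proofs of Proposition~\ref{prop:DetectSeparation} and Lemma~\ref{lem:SeparatingArc}; one must also be careful that $K$ might be unbounded in a complicated way (cf. Example~\ref{ex:NoEnds}, Example~\ref{ex:NoRightmostEnd}), so the choice of the bounded disc $D_0$ and the appeal to density of $\cE(\cD)$ must be made uniformly. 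The forward inclusion is comparatively routine once the neighborhood basis from the preceding proposition is in hand.
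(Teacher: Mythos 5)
Your proof is correct and takes essentially the same approach as the paper, whose own proof is only a two-sentence sketch of your reverse inclusion (for $q\in S\setminus cl_S(\widehat{\cE(K)})$ one finds a peripheral set through $q$ missing $K$) and which treats $cl_{\overline{P}}(K)\cap P = K$ and the forward inclusion as clear. The bookkeeping you supply --- choosing $K_a$, $K_b$, and $\gamma$ disjoint from $K$ and using connectedness of $K$ to confine it to one side of $\gamma$ --- is exactly the detail the paper elides.
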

\begin{proof}
It is clear that $cl_{\overline{P}}(K) \cap P = K$, so let $p \in S$. If $p$ is not in $cl_{S}(\widehat{\cE(K)})$ then we can find a peripheral set containing $p$ that does not intersect $K$, so $p \notin cl_{\overline{P}}(K)$.
\end{proof}

\begin{theorem}
Let $\cD$ be a generalized unbounded decomposition of the plane $P$ and let $S$ be the corresponding universal circle. Then the end compactification $\overline{P}$ is homeomorphic to a closed disc with boundary $S$. Additionally, any homeomorphism of $P$ that preserves $\cD$ extends to a homeomorphism of $\overline{P}$.
\end{theorem}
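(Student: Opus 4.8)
The plan is to characterize $\overline{P}$ as a simply connected compact surface with boundary $S$ and invoke the classification of surfaces, or more concretely to build an explicit homeomorphism to the closed unit disc. I would proceed as follows.

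\textbf{Step 1: $\overline{P}$ is a surface with boundary.} Since $\overline{P}$ is compact, first countable, and (as one checks from the basis of open sets in $P$ together with peripheral sets) Hausdorff and second countable, it suffices to exhibit charts. Points of $P$ already have Euclidean neighborhoods. For a point $p \in S$, I would use the nested sequence of peripheral neighborhoods $U_i = O(K_i, L_i, \gamma_i)$ constructed in the first-countability proposition. By Lemma~\ref{lem:SeparatingArc} and Lemma~\ref{lem:ArcSides} the region $C^+(\gamma_i; K_i, L_i)$ is an open disc, and the key point is that a peripheral set $O(K, L, \gamma)$, together with its trace $I$ on $S$, is homeomorphic to a half-disc with the open interval $I$ corresponding to the boundary diameter. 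This is where the earlier structure theory is essential: I must show that as one runs through the decomposition elements with ends in $I$, the corresponding subordinate sets give a "fanning out" whose ends exhaust $\widehat{\cE}$ densely inside $I$, so that $C^+(\gamma;K,L) \cup I$ with the subspace topology from $\overline{P}$ is genuinely a half-open disc. The density observations recorded just before the relevant proposition (one can find subordinate sets with ends in any given open interval, disjoint from any given bounded set) are the main tool.

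\textbf{Step 2: Identify the boundary and the interior.} The interior of the surface $\overline{P}$ is exactly $P$ (it is an open subset homeomorphic to the plane, and no point of $S$ has a Euclidean-open neighborhood since every peripheral neighborhood meets $S$ in a nonempty-or-empty interval but the peripheral set is a half-disc, never a full disc). The boundary $\partial \overline{P}$ is therefore $S$, which is a circle. So $\overline{P}$ is a compact surface with a single boundary circle.

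\textbf{Step 3: Simple connectivity, hence the disc.} I would show $\overline{P}$ is simply connected. Any loop in $\overline{P}$ can be pushed off $S$ into $P$ (using that $S$ has a collar neighborhood, namely the union of the half-disc peripheral sets, which Step 1 provides), and $P \cong \bR^2$ is simply connected, so the loop is null-homotopic in $P \subset \overline{P}$. A compact surface with nonempty connected boundary and trivial fundamental group is the closed disc, by the classification of surfaces; hence $\overline{P} \cong D^2$ with $\partial \overline{P} = S$. Alternatively, and perhaps cleaner, I could exhibit $\overline{P}$ as the quotient of the closed disc obtained from a proper embedding $P \hookrightarrow \mathrm{int}\,D^2$ that extends continuously to the boundary, using the peripheral sets to define the boundary values; but the classification-of-surfaces route avoids constructing such an embedding by hand.

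\textbf{Step 4: Equivariance.} If $h$ is a homeomorphism of $P$ preserving $\cD$, then it permutes the decomposition elements, hence induces a bijection of $\cE(\cD)$ preserving the circular order, hence (by Construction~\ref{con:UCforCO} and the remark that automorphisms of $\cE$ induce homeomorphisms of $S_u$) a homeomorphism $\hat h$ of $S$. The pair $(h, \hat h)$ sends peripheral sets to peripheral sets — since $h(O(K,L,\gamma)) = O(h(K), h(L), h(\gamma))$ and $h$ carries subordinate sets to subordinate sets — so it is continuous for the end-compactification topology on both source and target, and its inverse is handled symmetrically. Thus $h$ extends to a homeomorphism of $\overline{P}$ restricting to $\hat h$ on $S$.

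The main obstacle is Step 1, specifically verifying that each peripheral set $O(K,L,\gamma)$ really is a half-disc neighborhood with the correct boundary behavior — i.e. that the order topology on $\widehat{\cE}$ glues correctly to the planar topology on $C^+(\gamma;K,L)$ along the "seam." Pathologies like the no-rightmost-end example (Example~\ref{ex:NoRightmostEnd}) show one must be careful: the image $\widehat{\cE(K)}$ need not be closed, and the collapsing in Construction~\ref{con:UCforCO} must be reconciled with the geometry. The honest way through is to show that the nested peripheral neighborhoods $U_i$ from the first-countability argument shrink to $p$ in a controlled way and that their closures in $\overline{P}$ are closed half-discs, so that $\overline{P}$ locally looks like $\bR \times [0,\infty)$ near $S$; once that local model is established, Steps 2–4 are essentially bookkeeping.
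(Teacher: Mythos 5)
Your outline diverges from the paper's: the paper does not build local charts at all, but instead verifies the hypotheses of Zippin's characterization of the 2-disc (\cite{Wilder}, Theorem II.5.1) --- namely that $\overline{P}$ is compact, connected, first countable and Hausdorff, that some arc spans $S$, that every spanning arc separates $\overline{P}$, and that no proper closed subset of a spanning arc separates. Those are separation statements about arcs, checkable with the machinery of Lemma~\ref{lem:SeparatingArc}, Lemma~\ref{lem:AvoidBoundedSet} and Proposition~\ref{prop:DetectSeparation}, and they deliberately sidestep the need to produce a local product structure near $S$.

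That choice matters, because your Step 1 --- the assertion that $O(K,L,\gamma)\cup I$ with the subspace topology from $\overline{P}$ is a half-disc with $I$ as the boundary diameter --- is where essentially all of the difficulty of the theorem is concentrated, and you do not prove it. You correctly flag it as ``the main obstacle,'' but the density observations you cite only show that ends of subordinate sets accumulate on every subinterval of $I$; they do not control how the planar topology of $C^+(\gamma;K,L)$ glues to the order topology on $I$ along the seam. The pathologies the paper exhibits are exactly the ones that threaten this: a decomposition element need not have a ``rightmost end'' (Example~\ref{ex:NoRightmostEnd}), the region between two arcs can be unbounded while having no ends (Figure~\ref{fig:UnboundedMiddle}), and the universal circle is obtained only after an order completion and a Cantor--Bendixson collapse (Construction~\ref{con:UCforCO}), so points of $I$ need not correspond to ends of any decomposition element at all. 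Establishing a homeomorphism of $O(K,L,\gamma)\cup I$ with $\bR\times[0,\infty)$ in the presence of these phenomena is a local version of the whole theorem, not a lemma one can wave at; as written, Steps 2 and 3 (boundary identification, collar, simple connectivity, classification of surfaces) all rest on it. Step 4 is fine and matches the paper: peripheral sets are carried to peripheral sets, so a $\cD$-preserving homeomorphism extends. To repair the argument you would either need to carry out the half-disc construction in detail (likely by an exhaustion argument organizing the nested $U_i$ into a product structure), or do what the paper does and replace the chart-building by Zippin's spanning-arc criterion.
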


\begin{proof}
The second statement follows from the fact that the image of a peripheral set under a $\cD$-preserving homeomorphism is again a peripheral set. To prove that $\overline{P}$ is a closed disc we will use the following theorem. An arc $\gamma$ with endpoints $a,b$ is said to \emph{span} a set $S \subset X$ if $a, b \in S$ and $\mathring{\gamma} \subset X \setminus S$.

\begin{theorem}[Zippin, \cite{Wilder}, Theorem II.5.1]
Let $X$ be a connected, compact, 1st countable Hausdorff space. Suppose that there is a 1-sphere $S \in X$ such that there exists an arc in $X$ spanning $S$, every arc that spans $S$ separates $X$, and no closed proper subset of an arc spanning $S$ separates $X$. Then $X$ is homeomorphic to a closed 2-disc with boundary $S$.
\end{theorem}

It is clear that $\overline{P}$ is connected and Hausdorff and we have already shown that it is 1st countable and compact. Let's check the remaining conditions.

{\bf Existence of a spanning arc:}
Let $p \in S$. As in the proof of 1st countability we can find a sequence $\{U_i\}$ of nested open neighborhoods of $p$ such that $\cap U_i = p$ and $A_i = U_i \setminus \overline{U_{i+1}}$ is an open disc. Choose a point $p_i \in \gamma_i$ for each $i$ and let $c_i$ be an arc lying in $A_i$ that connects $p_i$ to $p_{i+1}$. The concatenation of these arcs is a proper ray since the $c_i$ are eventually disjoint from any bounded set in $P$. The closure of this ray is an arc from $c_0$ to $p$. Construct two such rays and connect their endpoints with an arc in $P$.

{\bf Spanning arcs separate:}
Let $\gamma$ be an arc spanning $S$ with initial point $a$ and terminal point $b$. Note that $\mathring{\gamma}$ is a properly embedded curve in $P$ so it separates $P$ into two unbounded discs by the Jordan curve theorem. Let $D_+$ and $D_-$ be the discs on the positive and negative sides of $\gamma$ respectively. We will show that $\gamma$ separates $\overline{P}$ into
\[ D'_+ = D_+ \cup (a,b) \]
and
\[ D'_- = D_- \cup (b,a). \]
Note that this statement is a little more than what we need: it also says that the orientations work out as expected.

Let $U_a$ and $U_b$ be peripheral sets containing $a$ and $b$ respectively. We may assume that $(x, y) = U_a \cap S$ and $(z, w) = U_b \cap S$ are disjoint. Since the part $\gamma'$ of $\gamma$ lying outside of $U_a \cup U_b$ is bounded we can find a subordinate set $K$ whose ends lie in $(y, z)$ that is disjoint from $K_a, L_a, \mu_a, K_b, L_b, \mu_b$ and $\gamma'$ and therefore disjoint from $\gamma$. Similarly find $L$ with ends in $(w, x)$ that is disjoint from $\gamma$.

Suppose that $\gamma$ did not separate $K$ from $L$. Then we could find an arc $\mu$ from $K$ to $L$ disjoint from $\gamma$. But then $b \in O(K, L, \mu)$ while $a \notin O(K, L, \mu)$ implying that $\gamma$ must intersect $\partial O(K, L, \mu)$ contrary to our assumption. So $\gamma$ must separate $K$ from $L$.

Now let $\mu$ be an arc from $K$ to $L$ that intersects $\gamma$ once and transversely. Then $K$ is on the positive side of $\gamma$ while $L$ is on the negative side. Note that $\overline{K}$ and $\overline{L}$ are connected since $K$ and $L$ are connected, so $D'_+$ and $D'_-$ are connected.

A similar argument shows that every point in $(a, b)$ has a neighborhood contained in $D'_+$ and every point in $(b, a)$ has a neighborhood contained in $D'_-$. This suffices to show that $D'_+$ and $D'_-$ are separated.

{\bf No subset 	of a spanning arc separates:}
This is now obvious.
\end{proof}


\begin{thebibliography}{99}
\bibitem{Bridson_Haefliger}
	M. R. Bridson and A. Haefliger.
	\emph{Metric Spaces of Non-Positive Curvature},
	Grundlehren der mathematischen Wissenschafen, vol. 319, Springer-Verlag, Berlin, Germany, 1999.
\bibitem{Calegari}
	D. Calegari,
	\emph{Universal circles for quasigeodesic flows},
	Geom. Topol. {\bf 10} (2006), 2271-2298. 
\bibitem{CalegariFoliations}
	D. Calegari
	\emph{Foliations and the geometry of 3-manifolds},
	Oxford Mathematical Monographs, Oxford University Press, Oxford, 2007.
\bibitem{Fenley}
	S. Fenley,
	\emph{Ideal boundaries of pseudo-Anosov flows and uniform convergence groups, with connections and applications to large scale geometry},
	arXiv:math/0507153v4 (2011)
\bibitem{Fenley_Mosher}
	S. Fenley and L. Mosher,
	\emph{Quasigeodesic flows in hyperbolic 3-manifolds},
	Topol. {\bf 40} (2001), no. 3, 503-537.
\bibitem{Franks}
	J. Franks,
	\emph{A new proof of the Brouwer plane translation theorem},
	Ergod. Th. \& Dynam. Sys. {\bf 12}, (1992), 217-226.
\bibitem{Gromov}
	M. Gromov
	\emph{Hyperbolic groups},
	Essays in group theory, Math. Sci. Res. Inst. Publ., vol. 8, Springer, New York, 1987, 75-263.
\bibitem{Kapovich}
	M. Kapovich,
	\emph{Hyperbolic manifolds and discrete groups},
	Progress in Mathematics, vol. 183 (2001), Birkhauser Boston Inc., Boston, MA.
\bibitem{Kechris}
	A. S. Kechris,
	\emph{Classical Descriptive Set Theory},
	Graduate Texts in Mathematics, vol. 156, Springer-Verlag, New York, NY, 1995.
\bibitem{Kovacevic}
	N. Kovacevic,
	\emph{Examples of M\"obius-like groups which are not M\"obius groups},
	Trans. Amer. Math. Soc. {\bf 351} (1999), no. 12, 4823-4835.
\bibitem{GKuperberg}
	G. Kuperberg,
	\emph{A volume-preserving counterexample to the Seifert conjecture},
	Comm. Math. Helv. {\bf 71} (1996), no. 1, 70-79.
\bibitem{KKuperberg}
	K. Kuperberg,
	\emph{A smooth counterexample to the Seifert conjecture},
	Ann. of Math. (2) {\bf 140} (1994), no. 3, 723-732.
\bibitem{Mosher}
	L. Mosher
	\emph{Examples of quasi-geodesic flows on hyperbolic 3-manifolds},
	Topology '90 (Columbus, OH, 1990), Ohio State Univ. Math. Res. Inst. Publ., vol. 1, de Gruyter, Berlin, 1992, 227-241.
\bibitem{Rechtman}
	A. Rechtman,
	\emph{Existence of periodic orbits for geodesible vector fields on closed 3-manifolds},
	Ergod. Th. \& Dynam. Sys. {\bf 30} (2010), no. 6, 1817-1841.
\bibitem{Seifert}
	H. Seifert,
	\emph{Closed integral curves in 3-space and isotopic two-dimensional deformations},
	Proc. Amer. Math. Soc. {\bf 1}, (1950), 287-302.
\bibitem{Schweizer}
	P. A. Schweitzer,
	\emph{Counterexamples to the Seifert conjecture and opening closed leaves of foliations},
	Ann. of Math. {\bf 100} (1974), no.2, 386-400.
\bibitem{Taubes}
	C. H. Taubes,
	\emph{The Seiberg-Witten equations and the Weinstein conjecture},
	Geom. Top. {\bf 11} (2007), 2117-2202.
\bibitem{Thurston}
	W. P. Thurston,
	\emph{Three-manifolds, foliations and circles II: the transverse asymptotic geometry of foliations},
	preprint.
\bibitem{Wilder}
	R.L. Wilder,
	\emph{Topology of manifolds},
	American Mathematical Society Colloquium Publications, vol. 32, American Mathematical Society, Providence, RI, 1979.
\bibitem{Zeghib}
	A. Zeghib,
	\emph{Sur les feuilletages g\'eod\'esiques continus des vari\'et\'es hyperboliques},
	Invent. Math. {\bf 114} (1993), no. 1, 193-206.
\end{thebibliography}
\end{document}